\newtheorem{thm}{Theorem}[section]
\newtheorem{corollary}[thm]{Corollary}
\newtheorem{cor}[thm]{Corollary}
\newtheorem{prop}[thm]{Proposition}
\theoremstyle{definition}
\newtheorem{defn}[thm]{Definition}
\newtheorem{conj}[thm]{Conjecture}
\newtheorem{exmp}[thm]{Example}
\newtheorem{problem}[thm]{Problem}
\newtheorem{conjecture}[thm]{Conjecture}
\theoremstyle{remark}
\newtheorem{remark}[thm]{Remark}
\newtheorem{rmk}[thm]{Remark}
\begin{document}

\numberwithin{equation}{section}

\newcommand{\hs}{\mbox{\hspace{.4em}}}
\newcommand{\ds}{\displaystyle}
\newcommand{\bd}{\begin{displaymath}}
\newcommand{\ed}{\end{displaymath}}
\newcommand{\bcd}{\begin{CD}}
\newcommand{\ecd}{\end{CD}}

\newcommand{\on}{\operatorname}
\newcommand{\proj}{\operatorname{Proj}}
\newcommand{\bproj}{\underline{\operatorname{Proj}}}
\newcommand{\spec}{\operatorname{Spec}}
\newcommand{\Spec}{\operatorname{Spec}}
\newcommand{\bspec}{\underline{\operatorname{Spec}}}
\newcommand{\pline}{{\mathbf P} ^1}
\newcommand{\aline}{{\mathbf A} ^1}
\newcommand{\pplane}{{\mathbf P}^2}
\newcommand{\cone}{\operatorname{cone}}
\newcommand{\aplane}{{\mathbf A}^2}
\newcommand{\coker}{{\operatorname{coker}}}
\newcommand{\ldb}{[[}
\newcommand{\rdb}{]]}

\newcommand{\bO}{\mathbb{O}}

\newcommand{\Sym}{\operatorname{Sym}^{\bullet}}
\newcommand{\Symp}{\operatorname{Sym}}
\newcommand{\Pic}{\bf{Pic}}
\newcommand{\Aut}{\operatorname{Aut}}
\newcommand{\codim}{\operatorname{codim}}
\newcommand{\PAut}{\operatorname{PAut}}

\newcommand{\kos}{{\underset{\clap{\scriptsize \it Kos}}{\;\sim\;}}}

\newcommand{\Fqbar}{\overline{\mathbb{F}}_q}
\newcommand{\Fq}{{\mathbb{F}_q}}

\newcommand{\too}{\twoheadrightarrow}
\newcommand{\C}{{\mathbb C}}
\newcommand{\Z}{{\mathbb Z}}
\newcommand{\Q}{{\mathbb Q}}
\newcommand{\A}{\mathbb{A}}
\newcommand{\R}{{\mathbb R}}
\newcommand{\Cx}{{\mathbb C}^{\times}}
\newcommand{\Cbar}{\overline{\C}}
\newcommand{\Cxbar}{\overline{\Cx}}
\newcommand{\cA}{{\mathcal A}}
\newcommand{\fA}{{\mathfrak A}}
\newcommand{\cS}{{\mathcal S}}
\newcommand{\cV}{{\mathcal V}}
\newcommand{\cM}{{\mathcal M}}
\newcommand{\bA}{{\mathbf A}}
\newcommand{\cB}{{\mathcal B}}
\newcommand{\cC}{{\mathcal C}}
\newcommand{\cD}{{\mathcal D}}
\newcommand{\D}{{\mathcal D}}
\newcommand{\cs}{{\mathbf C} ^*}
\newcommand{\boldc}{{\mathbf C}}
\newcommand{\cE}{{\mathcal E}}
\newcommand{\cF}{{\mathcal F}}
\newcommand{\bF}{{\mathbb F}}
\newcommand{\cG}{{\mathcal G}}
\newcommand{\G}{{\mathbb G}}
\newcommand{\cH}{{\mathcal H}}
\newcommand{\bH}{{\mathbf H}}
\newcommand{\CI}{{\mathcal I}}
\newcommand{\cJ}{{\mathcal J}}
\newcommand{\cK}{{\mathcal K}}
\newcommand{\cL}{{\mathcal L}}
\newcommand{\baL}{{\overline{\mathcal L}}}
\newcommand{\M}{{\mathcal M}}
\newcommand{\Mf}{{\mathfrak M}}
\newcommand{\bM}{{\mathbf M}}
\newcommand{\bm}{{\mathbf m}}
\newcommand{\cN}{{\mathcal N}}
\newcommand{\theo}{\mathcal{?}{O}}
\newcommand{\cP}{{\mathcal P}}
\newcommand{\cR}{{\mathcal R}}
\newcommand{\Pp}{{\mathbb P}}
\newcommand{\boldp}{{\mathbf P}}
\newcommand{\boldq}{{\mathbf Q}}
\newcommand{\bbL}{{\mathbf L}}
\newcommand{\cQ}{{\mathcal Q}}
\newcommand{\cO}{{\mathcal O}}
\newcommand{\cT}{{\mathcal T}}
\newcommand{\Oo}{{\mathcal O}}
\newcommand{\cY}{{\mathcal Y}}
\newcommand{\OX}{{\Oo_X}}
\newcommand{\OY}{{\Oo_Y}}
\newcommand{\all}{{\mathrm{all}}}
\newcommand{\cZ}{{\mathcal Z}}
\newcommand{\bD}{\mathbb{D}}
\newcommand{\DMod}{\mathcal{D}}
\newcommand{\cDMod}{\breve{\mathcal{D}}}
\newcommand{\rnD}{\cDMod}
\newcommand{\otY}{{\underset{\OY}{\ot}}}
\newcommand{\otX}{{\underset{\OX}{\ot}}}
\newcommand{\cU}{{\mathcal U}}\newcommand{\cX}{{\mathcal X}}
\newcommand{\cW}{{\mathcal W}}
\newcommand{\boldz}{{\mathbf Z}}
\newcommand{\Rees}{\operatorname{Rees}}
\newcommand{\IC}{\IndCoh}
\newcommand{\IndPerf}{\operatorname{IndPerf}}
\newcommand{\ssupp}{\operatorname{SS}}
\newcommand{\qgr}{\operatorname{q-gr}}
\newcommand{\gr}{\operatorname{gr}}
\newcommand{\rk}{\operatorname{rk}}
\newcommand{\Sh}{\mathcal{S}hv}
\newcommand{\Shv}{\Sh}
\newcommand{\SH}{{\underline{\operatorname{Sh}}}}
\newcommand{\End}{\operatorname{End}}
\newcommand{\uEnd}{\underline{\operatorname{End}}}
\newcommand{\Hom}{\operatorname{Hom}}
\newcommand{\uHom}{\underline{\operatorname{Hom}}}
\newcommand{\Sing}{\operatorname{Sing}}
\newcommand{\uHomY}{\uHom_{\OY}}
\newcommand{\uHomX}{\uHom_{\OX}}
\newcommand{\Ext}{\operatorname{Ext}}
\newcommand{\bExt}{\operatorname{\bf{Ext}}}
\newcommand{\Tor}{\operatorname{Tor}}

\newcommand*\brslash{\vcenter{\hbox{\includegraphics[height=3.6mm]{brokenslash4.png}}}}

\newcommand{\inv}{^{-1}}
\newcommand{\airtilde}{\widetilde{\hspace{.5em}}}
\newcommand{\airhat}{\widehat{\hspace{.5em}}}
\newcommand{\nt}{^{\circ}}
\newcommand{\del}{\partial}

\newcommand{\supp}{\operatorname{supp}}
\newcommand{\GK}{\operatorname{GK-dim}}
\newcommand{\hd}{\operatorname{hd}}
\newcommand{\pt}{\operatorname{pt}}
\newcommand{\id}{\operatorname{id}}
\newcommand{\res}{\operatorname{res}}
\newcommand{\lrar}{\leadsto}
\newcommand{\im}{\operatorname{Im}}
\newcommand{\hh}{HH}
\newcommand{\hn}{HN}
\newcommand{\hc}{HC}
\newcommand{\hp}{HP}
\newcommand{\Gal}{\operatorname{Gal}}

\newcommand{\TF}{\operatorname{TF}}
\newcommand{\Bun}{\operatorname{Bun}}

\newcommand{\mix}{{\mathrm{m}}}

\newcommand{\F}{\mathcal{F}}
\newcommand{\Ff}{\mathbb{F}}
\newcommand{\nthord}{^{(n)}}
\newcommand{\Gr}{{\mathfrak{Gr}}}

\newcommand{\BB}{\mathbb{B}}

\newcommand{\Ting}{\mathbb{T}^*_{[\mh 1]}}

\newcommand{\bL}{\mathbb{L}}
\newcommand{\deeq}{{\mathrm{deq}}}
\newcommand{\Fr}{\operatorname{Fr}}
\newcommand{\GL}{\operatorname{GL}}
\newcommand{\Perv}{\operatorname{Perv}}
\newcommand{\gl}{\mathfrak{gl}}
\newcommand{\SL}{\operatorname{SL}}
\newcommand{\KPerf}{\operatorname{KPerf}}
\newcommand{\ff}{\footnote}
\newcommand{\ot}{\otimes}
\def\Ext{\operatorname {Ext}}
\def\Hom{\operatorname {Hom}}
\def\Ind{\operatorname {Ind}}
\newcommand{\MHM}{\operatorname{MHM}}

\def\bbZ{{\mathbb Z}}

\newcommand{\Irr}{\mathrm{Irr}}

\newcommand{\nc}{\newcommand}
\nc{\ol}{\overline} \nc{\cont}{\on{cont}} \nc{\rmod}{\on{mod}}
\nc{\Mtil}{\widetilde{M}} \nc{\wb}{\overline} \nc{\wt}{\widetilde}
\nc{\wh}{\widehat} \nc{\sm}{\setminus} \nc{\mc}{\mathcal}
\nc{\mbb}{\mathbb}  \nc{\K}{{\mc K}} \nc{\Kx}{{\mc K}^{\times}}
\nc{\Ox}{{\mc O}^{\times}} \nc{\unit}{{\bf \on{unit}}}
\nc{\boxt}{\boxtimes} \nc{\xarr}{\stackrel{\rightarrow}{x}}

\newcommand{\cLX}{{\cL}X}

\nc{\Ga}{\G_a}
 \nc{\PGL}{{\on{PGL}}}
 \nc{\PU}{{\on{PU}}}

\nc{\h}{{\mathfrak h}} \nc{\kk}{{\mathfrak k}}
 \nc{\Gm}{\G_m}
\nc{\Gabar}{\wb{\G}_a} \nc{\Gmbar}{\wb{\G}_m} \nc{\Gv}{{\check G}}
\nc{\Tv}{\check T} \nc{\Bv}{\check B} \nc{\Pv}{\check P}
\nc{\g}{{\mathfrak g}}
\nc{\gv}{\check {\mathfrak g}} \nc{\RGv}{\on{Rep}\Gv}
\nc{\RTv}{\on{Rep}\check T}
 \nc{\Flv}{\check{\mathcal B}}
 \nc{\TFlv}{T^*\Flv}
 \nc{\Fl}{{\mathfrak Fl}}
\nc{\RR}{{\mathcal R}} \nc{\Nv}{\check {\mathcal{N}}}
\nc{\St}{{\mathcal St}} \nc{\ST}{{\underline{\mathcal St}}}
\nc{\Hec}{{\bf{\mathcal H}}} \nc{\Hecblock}{{\bf{\mathcal
H_{\alpha,\beta}}}} \nc{\dualHec}{{\bf{\check \mathcal H}}}
\nc{\dualHecblock}{{\bf{\check \mathcal H_{\alpha,\beta}}}}
\newcommand{\ramBun}{{\bf{Bun}}}
\newcommand{\ramBuno}{\ramBun^{\circ}}

\newcommand{\bB}{\mathbb{B}}

\nc{\Buntheta}{{\bf Bun}_{\theta}} \nc{\Bunthetao}{{\bf
Bun}_{\theta}^{\circ}} \nc{\BunGR}{{\bf Bun}_{G_\R}}
\nc{\BunGRo}{{\bf Bun}_{G_\R}^{\circ}}
\nc{\HC}{{\mathcal{HC}}}
\nc{\risom}{\stackrel{\sim}{\to}} \nc{\Hv}{{\check H}}
\nc{\bS}{{\mathbf S}}
\nc{\bC}{\mathbf{C}}

\def\Conn{\operatorname {Conn}}

\nc{\Vect}{{\operatorname{Vect}}}
\nc{\Hecke}{{\operatorname{Hecke}}}

\nc{\twistor}{{\widetilde{\Pp}^1_\R}}

\newcommand{\ZZ}{{Z_{\bullet}}}
\nc{\HZ}{{\mc H}\ZZ} \nc{\eps}{\epsilon}

\nc{\CN}{\mathcal N} \nc{\BA}{\mathbb A}
\nc{\XYX}{X\times_Y X}

\nc{\bQ}{\mathbb{Q}}

\nc{\ul}{\underline}

\nc{\bn}{\mathbf n} \nc{\Sets}{{\on{Sets}}} \nc{\Top}{{\on{Top}}}

\nc{\Simp}{{\mathbf \Delta}} \nc{\Simpop}{{\mathbf\Delta^\circ}}

\nc{\Cyc}{{\mathbf \Lambda}} \nc{\Cycop}{{\mathbf\Lambda^\circ}}

\nc{\Mon}{{\mathbf \Lambda^{mon}}}
\nc{\Monop}{{(\mathbf\Lambda^{mon})\circ}}

\nc{\Aff}{{\on{Aff}}} \nc{\Sch}{{\on{Sch}}}

\nc{\La}{\mathcal La}

\newcommand{\minus}{\scalebox{0.5}[1.0]{$-$}}
\renewcommand{\ng}{\minus}

\newcommand{\hCoh}{\wh{\Coh}}

\nc{\bul}{\bullet}
\nc{\module}{{\operatorname{-mod}}}

\nc{\dstack}{{\mathcal D}}

\nc{\BL}{{\mathbb L}}

\nc{\BD}{{\mathbb D}}

\nc{\BR}{{\mathbb R}}

\nc{\BT}{{\mathbb T}}

\nc{\bT}{\mathbb{T}}

\nc{\SCA}{{\mc{SCA}}}
\nc{\DGA}{{\mc DGA}}

\nc{\DSt}{{DSt}}

\nc{\lotimes}{{\otimes}^{\mathbf L}}

\nc{\bs}{\backslash}

\nc{\Lhat}{\widehat{\mc L}}

\newcommand{\Coh}{{\on{Coh}}}

\newcommand{\TT}{\mathbb{T}^{*[\mh 1]}}

\nc{\QC}{\operatorname{QC}}
\nc\Perf{\on{Perf}}
\nc{\Cat}{{\on{Cat}}}
\nc{\dgCat}{{\on{dgCat}}}
\nc{\bLa}{{\mathbf \Lambda}}
\nc{\QCoh}{\QC}
\newcommand{\IndCoh}{\QC^!}

\nc{\RHom}{\mathbf{R}\hspace{-0.15em}\on{Hom}}
\nc{\REnd}{\mathbf{R}\hspace{-0.15em}\on{End}}
\nc{\oo}{\infty}
\nc\Mod{\on{Mod}}

\nc\fh{\mathfrak h}
\nc\al{\alpha}
\nc\la{\alpha}
\nc\BGB{B\bs G/B}
\nc\QCb{QC^\flat}
\nc\qc{\cQ}

\nc{\fg}{\mathfrak g}

\nc{\fgv}{\check\fg}
\nc{\fn}{\mathfrak n}
\nc{\Map}{\on{Map}} \nc{\fX}{\mathfrak X}

\nc{\Tate}{\operatorname{Tate}}

\nc{\ch}{\check}
\nc{\fb}{\mathfrak b} \nc{\fu}{\mathfrak u} \nc{\st}{{st}}
\nc{\fU}{\mathfrak U}
\nc{\fZ}{\mathfrak Z}

\nc\fk{\mathfrak k} \nc\fp{\mathfrak p}

\nc{\RP}{\mathbf{RP}} \nc{\rigid}{\text{rigid}}
\nc{\glob}{\text{glob}}

\nc{\cI}{\mathcal I}

\nc{\quot}{/\hspace{-.25em}/}

\nc\aff{\it{aff}}
\nc\BS{\mathbb S}

\nc\Loc{{\mc Loc}}
\nc\Ch{{\mc Ch}}

\nc\git{/\hspace{-0.2em}/}
\nc{\fc}{\mathfrak c}
\nc\BC{\mathbb C}
\nc\BZ{\mathbb Z}
\nc\bZ{\mathbb Z}

\nc\stab{\text{\it st}}
\nc\Stab{\text{\it St}}

\nc\perf{\on{-perf}}

\nc\intHom{\mathcal{H}om}
\nc\intEnd{\mathcal{E}nd}

\nc\gtil{\widetilde\fg}

\newcommand{\shear}{{\mathbin{\mkern-6mu\fatslash}}}
\newcommand{\unshear}{{\,\mathbin{\mkern-6mu\fatbslash}}}

\def\adjquot{/_{\hspace{-0.2em}ad}\hspace{0.1em}}

\nc\mon{\text{\it mon}}
\nc\bimon{\text{\it bimon}}
\nc\uG{{\underline{G}}}
\nc\uB{{\underline{B}}}
\nc\uN{{\underline{\cN}}}
\nc\uNtil{{\underline{\wt{\cN}}}}
\nc\ugtil{{\underline{\wt{\fg}}}}
\nc\uH{{\underline{H}}}
\nc\uX{{\ul{X}}}
\nc\uY{\ul{Y}}
\nc\upi{\ul{\pi}}
\nc{\uZ}{\ul{Z}}
\nc{\ucZ}{\ul{\cZ}}
\nc{\ucH}{\ul{\cH}}
\nc{\ucS}{\ul{\cS}}
\nc{\ahat}{{\wh{a}}}
\nc{\shat}{{\wh{s}}}
\nc{\DCoh}{{\rm DCoh}}

\newcommand{\Lie}{\operatorname{Lie}}

\newcommand{\cind}{\operatorname{cInd}}
\newcommand{\LL}{\cL}

\newcommand{\Tot}{\operatorname{Tot}}

\newcommand{\mf}{\mathfrak}

\newcommand{\mantodo}[1]{\textbf{\textcolor{red}{todo: #1}}}
\newcommand{\help}[1]{\todo[color=green]{HELP: #1}}
\newcommand{\comm}[1]{\todo[color=green]{HC: #1}}
\newcommand{\chen}[1]{\todo[color=green]{HC: #1}}
\newcommand{\dbz}[1]{\todo[color=orange]{DBZ: #1}}
\newcommand{\helm}[1]{\todo[color=yellow]{DH: #1}}
\newcommand{\nadler}[1]{\todo[color=blue]{DN: #1}}

\newcommand{\gwt}{\operatorname{wt}_{\mathbb{G}_m}}

\newcommand{\cat}{\mathbf}
\newcommand{\OO}{\cO}
\newcommand{\dmod}{\operatorname{-mod}}
\newcommand{\DMOD}{\textbf{-mod}}
\newcommand{\dperf}{\operatorname{-perf}}
\newcommand{\dcmod}{\operatorname{-cmod}}
\newcommand{\dtors}{\operatorname{-tors}}

\newcommand{\dcomod}{\operatorname{-comod}}
\newcommand{\dcoh}{\operatorname{-coh}}
\newcommand{\ICoh}{\operatorname{ICoh}}
\newcommand{\dR}{\operatorname{dR}}
\newcommand{\LLf}{\widehat{\cL}}
\newcommand{\GG}{G_{\mathrm{gr}}}
\newcommand{\ggr}{{\mathrm{gr}}}
\newcommand{\Hgr}{{H_{\mathrm{gr}}}}
\newcommand{\Bgr}{{B_{\mathrm{gr}}}}

\newcommand{\Fun}{\operatorname{Fun}}

\newcommand{\NN}{\widetilde{\mathcal{N}}}

\newcommand{\actson}{\circlearrowright}

\newcommand{\Haff}{\mathcal{H}}
\newcommand{\grH}{\bar{\mathcal{H}}}

\newcommand{\Hcat}{\mathbf{H}}

\newcommand{\ubQ}{\underline{\mathbb{Q}}}

\newcommand{\sboxed}[1]{%
  \,\raisebox{-1ex}[\height][0pt]{\fbox{#1}}%
}
\newcommand{\uboxed}[1]{%
  \makebox[0pt]{\fbox{$\scriptstyle\mathstrut#1$}}%
}

\newcommand{\Rep}{\mathrm{Rep}}

\nc\Tr{{\mathbf{Tr}}} 
\newcommand{\tr}{\mathcal{T}r} 
\newcommand{\chern}{\operatorname{ch}}

\newcommand{\tilN}{\widetilde{\mathcal{N}}}

\newcommand{\un}{{un}}

\newcommand{\TS}{\mathbf{T}}

\newcommand{\DLnil}{\mathcal{V}}

\newcommand{\bQl}{\overline{\mathbb{Q}}_\ell}

\newcommand{\tens}[1]{%
  \mathbin{\mathop{\otimes}\limits_{#1}}%
}

\newcommand{\utimes}[1]{%
  \mathbin{\mathop{\times}\limits_{#1}}%
}

\mathchardef\md="2D
\mathchardef\mh="2D

\newcommand{\BG}{\cC}

\newcommand{\NH}{\mathbf{H}}

\newcommand{\bfI}{{\widecheck{\mathbf{I}}}}
\newcommand{\bfG}{{\widecheck{\mathbf{G}}}}

\newcommand{\LS}{\mathcal{LS}}
\newcommand{\APerf}{\operatorname{APerf}}

\newcommand{\bG}{\mathbb{G}}


\title[Circle Actions and Local Langlands]{Between Coherent and Constructible Local Langlands Correspondences}

\author{David Ben-Zvi} \address{Department of Mathematics\\University
  of Texas\\Austin, TX 78712-0257} \email{benzvi@math.utexas.edu}
\author{Harrison Chen} \address{Institute of Mathematics\\Academia Sinica\\Taipei 106319, Taiwan} \email{chenhi.math@gmail.com}
  
  \author{David Helm} \address{Department of Mathematics\\Imperial College\\  London SW7 2BU, United Kingdom} \email{dhelm@math.utexas.edu}
\author{David Nadler} \address{Department of Mathematics\\University
  of California\\Berkeley, CA 94720-3840}
\email{nadler@math.berkeley.edu}

\subjclass[2020]{Primary 20G25}

\date{}

\begin{abstract}
Refined forms of the local Langlands correspondence seek to relate representations of reductive groups over local fields with sheaves on stacks of Langlands parameters. But what kind of sheaves? Conjectures in the spirit of Kazhdan-Lusztig theory (due to Vogan and Soergel) describe representations of a group and its pure inner forms with fixed central character in terms of constructible sheaves. Conjectures in the spirit of geometric Langlands (due to Fargues, Zhu and Hellmann) describe representations with varying central character of a large family of groups associated to isocrystals in terms of coherent sheaves. The latter conjectures also take place on a larger parameter space, in which Frobenius (or complex conjugation) is allowed a unipotent part. 

In this article we propose a general mechanism that interpolates between these two settings. This mechanism derives from the theory of cyclic homology, as interpreted through circle actions in derived algebraic geometry. We apply this perspective to categorical forms of the local Langlands conjectures for both archimedean and non-archimedean local fields. In the nonarchimedean case, we describe how circle actions relate coherent and constructible realizations of affine Hecke algebras and of all smooth representations of $GL_n$, and propose a mechanism to relate the two settings in general. In the archimedean case, we explain how to use circle actions to derive the constructible local Langlands correspondence (in the form due to Adams-Barbasch-Vogan and Soergel) from a coherent form (a real counterpart to Fargues' conjecture): the tamely ramified geometric Langlands conjecture on the twistor line, which we survey.
\end{abstract}

\maketitle


\tableofcontents

\section{Overview: Sheaves on Langlands Parameters}
The fundamental theme of spectral decomposition in representation theory seeks to describe representations of a group $G$ in terms of a dual object $\wh{G}$, which parametrizes (suitable) irreducible representations. Harmonic analysis asks for the spectral description of large representations of $G$ (such as functions on $G$-spaces) as families of vector spaces (the multiplicity spaces) over $\wh{G}$. We might also seek to describe indecomposable or standard modules in terms of irreducibles and solve extension problems, or more generally describe families of representations, in terms of the geometry of $\wh{G}$. 

\medskip

 This theme underlies much of geometric representation theory, where one seeks to describe representations as sheaves on a parameter space, and utilize the geometry of sheaves to solve representation-theoretic problems. There are two main types of sheaves that play this role: {\em constructible} (taken in the broad sense to include perverse sheaves as well as $\D$-modules) and {\em coherent} (used informally to include quasicoherent and ind-coherent sheaves).  

\medskip

{\bf N.B.:} We restrict our attention completely to characteristic zero representation theory in this article, and work in a derived ($\infty$-categorical) rather than abelian setting, so that all categories will be $k$-linear dg categories (or $k$-linear stable $\infty$-categories) for a field $k\supset \mathbb Q$.

\subsection{Constructible sheaves in representation theory.}
Let us recall two classic instances of the role of constructible sheaves, Kazhdan-Lusztig theory and Springer theory, which serve as prototypes for 
the local Langlands correspondence over archimedean and non-archimedean fields, respectively. In these and other situations 
of interest in geometric representation theory, we are dealing with equivariance for group actions with finitely many orbits, and can work equivalently with either Betti or de Rham versions of equivariant derived categories, i.e., with either constructible sheaves or $\D$-modules. 

\medskip

Kazhdan-Lusztig theory identifies category $\cO_0$ of highest-weight modules (with trivial infinitesimal character) for a complex Lie algebra $\fg$ as the category of $N$-equivariant perverse sheaves on the flag variety $\cB=G/B$. This description sets up a bijection between irreducibles in category $\cO_0$ and the set of $N$-orbits on the flag variety (which are simply connected hence carry a unique local system). Much more significantly it lets one describe composition series of indecomposable modules in terms of the equivariant geometry of the flag variety, as captured by its category of sheaves (the subject of the Kazhdan-Lusztig conjecture). 

\medskip

In Springer theory, we consider complex representations of the Chevalley group $G(\Ff_q)$, 
and in particular those representations that admit a $B(\Ff_q)$-fixed vector, the unipotent principal series representations. Springer theory realizes these representations inside the category of $G$-equivariant perverse sheaves on the nilpotent cone $\cN\subset \fg$. This goes as follows: unipotent principal series representations are identified with modules for the finite Hecke algebra 
$$\Haff^f=\C[B(\Ff_q) \bs G(\Ff_q) / B(\Ff_q)] = \End_{G(\Ff_q)}(\C[G(\Ff_q)/B(\Ff_q)]),$$ which (after choosing a square-root of $q$) may be identified with the Weyl group $W$ of $G$ (though that identification is in a sense incidental to our story). 
Let $\mu:\tilN=T^{\ast} \cB\to \cN$ denote the Springer resolution, and consider the Springer sheaf $$\bS=\mu_* \C_{\tilN/G}[\dim(G/B)]\in \Perv(\cN/G).$$
There is an isomorphism $$\Haff^f \simeq \End_{\cN/G}(\bS)$$ between the finite Hecke algebra and the endomorphisms of the Springer sheaf. The Springer sheaf is projective (in fact the whole category $\Perv(\cN/G)$ is semisimple), whence we obtain a full embedding 
$$\{\mbox{unipotent principal series of $G(\Ff_q)$}\}\simeq \Haff^f\module\stackrel{\sim}{\longrightarrow} \langle \bS\rangle \subset \Perv(\cN/G)$$
 of Hecke-modules into perverse sheaves as the subcategory generated by the Springer sheaf. As a result, irreducible unipotent principal series representations are classified by nilpotent orbits together with {\em certain} equivariant local systems on them, or equivalently representations of the component group of the centralizer of a nilpotent. (The missing local systems are accounted for by Lusztig's generalized Springer correspondence.)

\subsection{Constructible sheaves in the local Langlands program.}\label{local Langlands intro}
The local Langlands correspondence in Vogan's formulation~\cite{vogan} proposes an analogous classification of representations of reductive groups over local fields. Let us fix a local field $K$ and a field $k$ of characteristic zero. We also fix for simplicity a connected split reductive group $G$ over $K$ with Langlands dual group $\Gv$, an algebraic group over $k$.
The local Langlands correspondence
provides a map
$$\{\mbox{Irreducible smooth reps. of }G(K)\}\longrightarrow \LL_\Gv(K):= \{ \cL_K\longrightarrow \Gv\mbox{ continuous}\}/\Gv$$ from representations to Langlands parameters, [continuous] representations of the Weil(-Deligne) group of $K$ into $G$. The fibers of this map, the {\em L-packets}, are expected to be parametrized by {\em certain} irreducible representations of the component group of the centralizer of the corresponding Langlands parameter -- i.e., by certain irreducible equivariant local systems. To see the missing local systems, one replaces the single group $G(K)$ by its collection of pure inner forms of (which arises naturally when thinking sheaf-theoretically about representations of $G(K)$, see e.g.~\cite{bernstein stacks} and Section~\ref{automorphic real} below).

\medskip

Thus, we obtain a conjectural bijection between equivariant local systems on the orbit of a Langlands parameter and a collection of irreducible representations of pure inner forms of $G(K)$. This lead to Vogan's conjectures~\cite{vogan}, providing a complete description on the level of Grothendieck groups of the representation theory of $G(K)$ and its pure inner forms with fixed central character in terms of constructible sheaves on suitable spaces of Langlands parameters, in the spirit of Kazhdan-Lusztig theory and Springer theory. The relevant spaces of Langlands parameters involve a fixed infinitesimal character (as described in~\cite{vogan}), which means in particular fixing a semisimple conjugacy class in $\Gv$ in the non-archimedean setting (the class of a semisimple Frobenius element) or a semisimple conjugacy class in $\fgv$ in the archimedean setting (coming from the derivative of the action of the Weil group). 

\medskip
In the archimedean setting this picture was established by Adams-Barbasch-Vogan~\cite{ABV} (cf. Mason-Brown's lecture~\cite{MB}). Moreover Soergel~\cite{soergel} conjectured a categorical enhancement of these results, asserting that categories of Harish-Chandra modules (with fixed infinitesimal character) for pure inner forms of a real reductive group are Koszul dual to equivariant derived categories of constructible sheaves on the ABV spaces of Langlands parameters. Soergel's conjecture was proved in~\cite{soergel} for complex groups and $SL_2(\R)$ and in~\cite{RomaKari} for quasisplit blocks. 

\medskip
In the nonarchimedean setting, Kazhdan and Lusztig~\cite{KL} and Ginzburg \cite{CG} established an affine counterpart to Springer theory, identifying the representation theory of the affine Hecke algebra (or equivalently unramified principal series representations of $G(K)$) in terms of (certain) equivariant constructible sheaves on the spaces of unipotent Langlands parameters. 
Lusztig~\cite{lusztig unipotent} (following~\cite{cuspidal1,cuspidal2}) completed this picture to an ``affine generalized Springer correspondence", establishing the local Langlands correspondence between all unipotent representations of pure inner forms of $G(K)$ for $G$ adjoint, simple and unramified and all equivariant constructible sheaves on unipotent Langlands parameters.  
This work has recently been extended by Solleveld~\cite{soluni,soluni2} first to all unipotent representations of connected $G$ and then~\cite{sol1,sol2} to prove Vogan's $p$-adic Kazhdan-Lusztig conjecture for a much broader class of representations.
The main input in all these developments is an identification Hecke algebras associated to blocks of representations with Ext-algebras of suitable perverse sheaves. When combined with formality theorems as in~\cite{rider, RiRu1,RiRu2,sol2} this results in derived equivalences between categories of representations and categories of constructible sheaves on Langlands parameters. 

\medskip
One disadvantage of this constructible picture is that it appears inadequate to describe variation of the continuous parameters -- indeed the spaces of Langlands parameters appearing in Vogan's conjectures vary discontinuously as a function of infinitesimal characters. 

\subsection{Coherent sheaves in representation theory.} 
The origin of the role of coherent sheaves is the identification of all modules for a commutative algebra $Z$ as quasicoherent sheaves on $\Spec(Z)$. If $Z$ arises as the center of an associative algebra $A$ then we obtain a localization functor from $A$-modules to quasicoherent sheaves on $\Spec(Z)$. More abstractly a category $\cC$ sheafifies over the spectrum of its Bernstein center
(or Hochschild cohomology), with Hom spaces localizing to quasicoherent sheaves, providing a spectral decomposition of $\cC$. Thus coherent sheaves appear naturally in contexts where it is important to study families of representations, in particular with varying central character. Among the many motivations in the setting of the local Langlands correspondence we mention connections with harmonic analysis (see e.g.~\cite{SV}), $K$-theory and the Baum-Connes conjecture~\cite{ABPS}, and modular and integral representation theory~\cite{emerton helm, curtis, HM converse}.

\medskip
The Langlands correspondence encodes ambitious algebraic spectral decompositions of this flavor, with spaces of Langlands parameters playing the role of central parameters. 
On one hand, the theory of (unramified) Hecke operators identifies a large commutative algebra of symmetries on automorphic forms (in classical Langlands) or automorphic sheaves (in geometric Langlands) for a reductive group $G$. On the other hand, the spectrum of this commutative algebra is related (via the Satake correspondence) with a space of Langlands parameters into the dual group $\Gv$. We then seek to spectrally decompose the automorphic side in terms of algebraic geometry of Langlands parameters. This leads to the geometric Langlands conjecture (in any of its variants) for curves over algebraically closed fields, in which a category of automorphic sheaves is identified with a category of (ind-)coherent sheaves on a stack of local systems on the curve. 

\medskip
Recently a new ``coherent" categorical form of the local Langlands correspondence has emerged, inspired by the geometric Langlands program. in which categories of representations of groups over local fields are described via coherent sheaves on stacks of Langlands parameters (for details see the lectures of Fargues-Scholze, Emerton-Hellmann-Gee and Zhu in this summer school). In the archimedean setting such a coherent formulation was proposed as the tamely geometric Langlands correspondence over the twistor line in~\cite{loops and parameters, loops and reps}, and will be discussed in this article. In the non-archimedean setting such a picture began to emerge from several different directions (see in particular~\cite{lafforgue,lafforgue ICM,lafforgue zhu,genestier,dennis shtuka, GKRV}), leading to conjectures of Zhu~\cite{xinwen survey}, Hellmann~\cite{hellmann}) and Fargues~\cite{fargues}. This perspective was profoundly developed in the monumental work of Fargues and Scholze~\cite{farguesscholze}, which in particular establishes the ``automorphic\hspace{0ex}-to-Galois" direction, showing that the automorphic category (and thus its subcategory of representations of $G(K)$) sheafifies over the stack of Langlands parameters.
Upcoming work~\cite{hemo zhu} of Hemo and Zhu applies the theory of categorical traces (as in~\cite{xinwen trace}) and Bezrukavnikov's tamely ramified local geometric Langlands correspondence~\cite{roma ICM,roma hecke} to establish a coherent local Langlands correspondence for unipotent representations (the principal series part of which is proved in~\cite{BCHN}). 

\medskip
Categories of coherent sheaves on stacks are much larger than those of constructible sheaves. For example, at a point with stabilizer group $H$, quasicoherent sheaves are indexed by all algebraic representations of $H$ while constructible sheaves correspond only to representations of the component group of $H$ (or in the derived setting to modules for chains on $H$). 
Thus we must consider a much larger representation theoretic side to match the entire categories of coherent sheaves on Langlands parameters (as opposed to distinguished full subcategories). 
Indeed, rather than the finite collection of pure inner forms appearing in Vogan's conjectures, the conjectures of~\cite{xinwen survey, fargues, farguesscholze} identify coherent sheaves on Langlands parameters with representations of an infinite family of groups. These groups, which are all (groups of points of) inner forms of Levis of $G$, arise as the automorphism groups of $G$-isocrystals (or of points in a suitable stack of $G$-bundles) and are indexed by Kottwitz' set $B(G)$.

\subsection{Relating algebra and topology}
Broadly speaking, we have described the following two flavors of the local Langlands correspondence:
\medskip

 {\small 
\xymatrix@C=1cm{
   & *++[F-,]\txt{ Representations \\ 
of pure inner forms of $G(K)$  \\ (fixed infi. char.) 
    }     \ar@{.}[rr]^{ \medskip \textrm{``Constructible"}}_{\medskip \textrm{Local Langlands}} && 
    *++[F-,]\txt{  Constructible sheaves on  \\
   Langlands parameters \\(fixed infi. char)}   
    & 
    \\ 
   & *++[F-,]\txt{Smooth representations of    \\ 
 stabilizers of $G$-isocrystals  } \ar@{.}[rr]^{\medskip \textrm{``Coherent"}}_{ \medskip \textrm{Local Langlands}} &&
    *++[F-,]\txt{Coherent sheaves on\\ Langlands parameters }
   &
 }
 }
 
 \medskip

Our main goal in this lecture is to describe a mechanism that interpolates between the coherent and constructible settings for the local Langlands program -- specifically, we propose that the latter appears as the generic point in a deformation of a full subcategory of the former. In particular we propose a form of the constructible categories which does vary well with infinitesimal character, and an operation which (conjecturally) cuts down the representation theory from all $G$-isocrystals to pure inner forms. 

\medskip

For $X$ be a smooth complex variety, the algebraic de Rham theorem identifies the cohomology of the constant sheaf $H^*(X,\C_X) \simeq\Ext^*_{\Sh(X)}(\C_X,\C_X)$, i.e., the derived endomorphisms of the local system,  with de Rham cohomology $H^*(X, \Omega^\bullet, d)\simeq\Ext^*_{\D_X}(\cO_X,\cO_X)$, i.e., the derived endomorphisms of the $\D$-module $\cO_X$ equipped with its natural flat connection.  The Riemann-Hilbert correspondence generalizes this to an equivalence between the abelian category of local systems on $X$ and the abelian category of vector bundles with regular flat connections on $X$, and may be extended to an equivalence of categories between the constructible derived category $\Sh_c(X; \C)$ of $X$ and a full subcategory $\cD_{rh}(X)$ (consisting of regular holonomic objects) of the derived category $\cD(X)$ of $\D_X$-modules.

\medskip

This equivalence extends to the equivariant setting via descent, i.e., to smooth stacks $X$. Moreover in the situations of greatest interest in geometric representation theory, where in particular we are dealing with equivariant sheaves on schemes acted on with finitely many orbits, we can drop the modifiers ``regular holonomic" and identify the equivariant constructible derived category with the derived category $\cD_c(X)$ of all coherent strongly equivariant $\D$-modules.  Thus for example in the settings of Kazhdan-Lusztig or Springer theory as recounted above we can simply replace the use of constructible sheaves by $\D$-modules.

\medskip

$\D$-modules themselves can be described as deformation quantizations of quasicoherent sheaves on the cotangent bundle $\bT^*_X$. The sheaf $\D_X$ of differential operators is filtered by the order of operators, with associated graded $\gr(\D_X)\simeq \cO_{\bT^*_X}$ identified with symbols, i.e., functions on the cotangent bundle of $X$. As a result the sheaf $\D_X$ can be described as a deformation quantization of $\cO_{\bT^*_X}$ with its standard symplectic form: the Rees construction on $\D_X$ produces a $\Gm$-equivariant family of sheaves of algebras $\D_{\hbar,X}$ for $\hbar\in \BA^1/\G_m$, recovering $\D_X$ at $\hbar \ne 0$ and $\cO_{\bT^*_X}$ with Poisson bracket from the $\hbar$-linear term in the commutator at $\hbar=0$. Passing to modules we see the category of $\D_X$-modules as the fiber at $\hbar \ne 0$ of a $\Gm$-equivariant family of categories over $\BA^1$, degenerating to $\QCoh(\bT^*_X)^{\bG_m}$ at $\hbar=0$.

\medskip

The theory of cyclic homology due to Connes and Feigin-Tsygan, as seen through the eyes of derived algebraic geometry, provides a topological interpretation of the de Rham complex, whence of the category of $\D$-modules. Namely, consider the simplicial (or ``animated") circle $S^1= B\Z$ and the {\em derived loop space} 
$$\cL X=\Map(S^1,X)=X\times_{X\times X} X.$$ On a scheme, the Hochschild-Kostant-Rosenberg theorem identifies $\cLX$ with the spectrum of the algebra of differential forms (in negative cohomological degrees), while the de Rham differential has the pleasant interpretation as the linearization of the loop-rotation $S^1$-action on $\cL X$, and is encoded by the Connes $B$-operator of degree $-1$.  The Rees parameter $\hbar\in \BA^1$ gets interpreted (up to a cohomological shift) as the $S^1$-equivariant parameter $k[u]\simeq H^*_{S^1}(\pt)\simeq H^*(\BC\Pp^{\infty})$. Thus (up to cohomological shift and renormalization) the $\hbar$-deformation from $\QC(\bT^*_X)$ to $\cD$-modules on $X$ is identified with the $k[u]$-linear category of {\em cyclic sheaves} $\QC^!(\cL X)^{S^1}$ on $\cL X$: $S^1$-equivariant (ind-)coherent sheaves on the loop space, i.e. a categorification of the theory of cyclic homology. Passing to {\em periodic cyclic sheaves} -- inverting the equivariant parameter $u$ --  we recover the base-change $\cD_X\module \ot_k k[u,u\inv]$ of the category of $\cD_X$-modules.

\medskip
This cyclic perspective on $\D$-modules may be considered a formal manipulation in the setting of smooth schemes, but takes on a completely different flavor in the setting of smooth stacks $X$. The derived loop space $\cL X$ is identified with the (derived) {\em inertia stack} of $X$, i.e., the stack of pairs of points and automorphisms
$$\cL X\simeq \{x\in X, g\in \Aut(x)\},$$ 
which in the special case of finite orbit stacks is in fact an {\em ordinary} (underived) stack.  An extension of the Koszul duality pattern described above relates the category of $\D$-modules on a smooth stack $X$ to the circle action on the {\em formal completion} of the loop space (where we replace $\Aut(x)$ by its formal group). But the full category of coherent sheaves on $\cL X$ (with its circle action) gives a new and richer category, which enhances $\D$-modules on $X$ by adding new continuous parameters (the ``central characters", appearing as eigenvalues of the loop $g$). 

\medskip
The message of this lecture is that such loop spaces and their variants arise naturally as stacks of Langlands parameters, and their categories of coherent sheaves provide natural spectral sides for categorical forms of the local Langlands correspondence. Crucially, this description endows these categories with circle actions.
In the archimedean setting, the circle action appears from the geometric action of $S^1$ on the twistor line. In the unipotent non-archimedean setting, the circle action arises from interpolating between the trace of Frobenius and the trace of the identity (loop space) by varying $q$ (informally, it rotates the ``circle" $\Spec({\mathbb F}_q)$ with fundamental group generated by Frobenius). In general we only speculate about a general construction of circle actions reflecting a subtle aspect of the theory of traces of Frobenius. 

\medskip

The utility of the circle actions is that they provide the desired mechanism relating coherent sheaves to $\D$-modules on Langlands parameter spaces with fixed central characters, i.e., interpolating between coherent and constructible Langlands correspondences. Namely the category of cyclic (i.e., $S^1$-equivariant) sheaves forms a family over the equivariant parameter $u$. Specializing to $u=0$ we obtain a full subcategory of ($S^1$-invariant) coherent sheaves, the setting of the ``geometric-Langlands-type" local Langlands correspondence. On the other hand, for nonzero $u$ we obtain the categories of $\D$-modules that parametrize representations of pure inner forms via Vogan's local Langlands correspondence.

\subsection{Jordan decomposition}
The key mechanism relating cyclic sheaves with representation theory is the ``Jordan decomposition for loops", a stacky generalization of the Jordan decomposition for conjugacy classes in a reductive group which allows us to interpolate different central characters. Let us illustrate the most basic example. Consider the stack $\Gv/\Gv$ (which we recall is the inertia or derived loops $\Gv/\Gv=\cL(B\Gv)$ in the stack $B\Gv=\pt/\Gv$). Taking invariant polynomials (i.e., passing to the affinization) defines a map
$$\chi:\Gv/\Gv\longrightarrow \Gv\quot\Gv=\Spec(\cO(\Gv)^{\Gv})\simeq \Hv\quot W$$
to the space of continuous parameters. 

\medskip
If we interpret $\Gv\quot \Gv$ as semisimple conjugacy classes in $\Gv$, as in the constructible form of the local Langlands correspondence, we encounter an apparent discontinuity: 
picking a semisimple representative $\alpha\in (\Gv)^{ss}$ of a class $[\alpha]\in \Gv\quot \Gv$, the corresponding centralizers $\Gv(\alpha)$ depend discontinuously on the eigenvalues. Likewise the classifying stacks $B\Gv(\alpha)$ don't form a nice family over $\Hv\quot W$. 

\medskip
However the Jordan decomposition in $\Gv$ corrects this by adding commuting unipotents: the fiber $\chi\inv(\wh{[\alpha]})$ of the 
formal neighborhood of $[\alpha]$ is identified with $\Gv(\alpha)^u/\Gv(\alpha)$, the stack of unipotent conjugacy classes in the centralizer $\Gv(\alpha)$. 
This is a special case of a general construction, the {\em unipotent loop space}, a variant of the inertia defined for any stack in which we only allow {\em unipotent} automorphisms:
 $\cL^u(B\Gv(\alpha))=\Gv(\alpha)^u/\Gv(\alpha)$. 
In other words, if we augment $B\Gv(\alpha)$ to allow nontrivial unipotent classes in $\Gv(\alpha)$ (replacing it by its unipotent loops) then we obtain a good family, with total space the loops in $B\Gv$. 

\medskip
A crucial feature of unipotent automorphisms (or abstractly of unipotent loops) is that they carry a $\G_m$ action contracting them to the identity. This action allows us to relate coherent sheaves on the full fiber $\chi\inv(\wh{[\alpha]})=\Gv(\alpha)^u/\Gv(\alpha)$ with those on its formal completion. Finally Koszul duality relates cyclic sheaves on this formal completion with $\D$-modules on the stack $B\Gv(\alpha)$ itself. As a result the family of categories of $\D$-modules on $B\Gv(\alpha)$ obtain a uniform expression in terms of the circle action on the category of coherent sheaves on $\Gv/\Gv$. 

\medskip
We will see similar phenomena occurring in both the archimedean and non-\hspace{0ex}archimedean settings: by dropping the condition that the action of Frobenius or the archimedean Weil group be semisimple, we obtain versions of Langlands parameter spaces which smoothly interpolate different central characters, but still capture the same categories of constructible sheaves thanks to the application of contracting $\G_m$ actions combined with $S^1$-equivariant localization. Moreover the categories of coherent sheaves on these larger Langlands parameter spaces turn out to be the ones needed for the geometric-Langlands-flavored formulations of the local Langlands correspondence.

\subsection{Outline}
The rest of this paper is organized as follows: Section~\ref{loopspaces} is a brief and gentle introduction to derived loop spaces.  In Sections~\ref{unipotent} and \ref{real} we describe the archmidean and non-archimedean local Langlands correspondences, respectively, from the perspective of loop spaces and circle actions, pushing back to Section~\ref{koszul} an overview of the underlying technical mechanisms from derived algebraic geometry. In more detail:

\medskip

In Section~\ref{loopspaces} we review the notion of derived loop spaces and circle actions and give an overview of the general pattern from cyclic homology and derived algebraic geometry that we will apply, postponing all technical details to the Appendix.

\medskip
In Section~\ref{unipotent} we describe the role of loop spaces in the unipotent non-archimedean local Langlands correspondence. We first describe some recent results in the categorical local Langlands program, realizing categories of representations as full subcategories of coherent sheaves. We then describe results and conjectures concerning the role of circle actions in the unipotent local Langlands correspondence as interpolating coherent and constructible forms of the correspondence. We then briefly speculate on the appearance of circle actions in the general non-archimedean local Langlands correspondence. 

\medskip
In Section~\ref{real} we describe the role of loop spaces in the archimedean local Langlands correspondence. In particular we give a simple stacky description of the ABV geometric parameter spaces~\cite{ABV} and a corresponding statement of Soergel's conjecture, following~\cite{loops and reps}.
We then introduce the Langlands parameter stack associated to the twistor line $\twistor$, which smoothly interpolates between the ABV parameter spaces for varying infinitesimal character and extend Soergel's conjecture to this setting in terms of cyclic sheaves. On the automorphic side, we recover representations of pure inner forms from the stack of real parabolic bundles on the twistor line. We formulate the tamely ramified geometric Langlands conjecture for $\twistor$ (following~\cite{loops and parameters, betti}) and explain how it recovers Soergel's conjecture as its periodic cyclic deformation.

\medskip

In the Appendix, Section~\ref{koszul}, we describe the key technical mechanisms underlying the paper. We relate categories of $S^1$-equivariant coherent sheaves on loop spaces (among which we find stacks of Langlands parameters) to categories of $\D$-modules (in particular on Vogan varieties) in three steps. The first is the Jordan decomposition of loops, a general pattern relating loop spaces of quotient stacks to unipotent loop spaces. Next, the contracting $\Gm$-action on unipotent loop spaces relates their sheaf theory to that of formal loops. Finally, we apply the Koszul duality between $S^1$-equivariant coherent sheaves on formal loop spaces and $\D$-modules.



\section{Equivariant Sheaves on Loop Spaces}\label{loopspaces}


In this section, we summarize key statements about equivariant sheaves on loop spaces in derived algebraic geometry, the setting for the representation theoretic constructions in the following sections. We defer further details of the general theory to Section~\ref{koszul}.

\medskip
We will work with a broadly applicable set-up: over a fixed  field $k$ of characteristic zero with a smooth Artin stack $X$ with affine diagonal.  In all cases of present interest, we may assume $X = Y/G$ where $Y$ is a smooth quasi-projective scheme with the action of an affine algebraic group $G$.

\subsection{Variations on the theme of loops}\label{loop spaces}


We view the topological group $S^1 = B\bZ = K(1, \bZ)$ as a locally constant group object in prestacks.  We may view it as the suspension of  $S^0 = \Spec k\coprod  \Spec k$:
$$S^1 \simeq \Sigma S^0 = \Spec k  \coprod_{\Spec k \coprod \Spec k} \Spec k.$$
This presentation of $S^1$ as a colimit leads to a presentation of the mapping stack out of $S^1$, i.e., 
the (derived) \emph{loop space},  as a limit
$$\cL X := \Map(S^1, X) = \Map( \Spec k  \coprod_{\Spec k \coprod \Spec k} \Spec k, X) = X \times_{X \times X} X.$$
Here the (derived) fiber product is the self-intersection of the diagonal  of $X$. 
Equivalently,  the loop space is the (derived) inertia stack with $k$-points $\cL X(k)$  given by pairs $(x, \gamma)$ where $x \in X(k)$ and $\gamma \in \Aut(x) = (\{x\} \times_X \{x\})(k)$ is an automorphism of the point $x$, modulo $\Aut(x)$-conjugacy.

\begin{exmp}\label{quotient stack exmp}
We consider the following examples.  \\
\indent(1) For $X$ a scheme, letting $\cB_X$ denote the sheafified cyclic bar complex on $X$ viewed as a sheaf of dg algebras under the shuffle product, we have $\cL X = \Spec_X \cB_X$. \\
\indent(2) For $X = BG$ a classifying stack, the diagonal is smooth, and in particular flat.  Thus, $\cL X$ is a underived stack, i.e. is the classical inertia stack $G/G$.\\
\indent(3) For $X = Y/G$ a global quotient stack, the loop space has the following presentation as a derived fiber product of stacks along representable maps:
$$\begin{tikzcd}
\cL(Y/G) \arrow[r] \arrow[d] & (Y \times G)/G \arrow[d, "{(a, p)}"] \\
Y/G \arrow[r, "\Delta"] & (Y \times Y)/G
\end{tikzcd}$$
where $G$ acts diagonally on $Y \times Y$ and $Y \times G$, $a$ is the action map, $p$ is the projection map, and $\Delta$ the diagonal.  This stack has $k$-points
$$\cL(Y/G)(k) = \{(y, g) \in Y(k) \times G(k) \mid g \cdot y = y\}/G(k).$$
\indent(4) For a smooth finite orbit stack $X$ (i.e. a stack with finitely many $k$-points), the derived loop space $\cL X$ is in fact an {\em ordinary} (underived) stack, and is identified with the classical inertia stack of $X$.  This absence of derived structure does not depend on flatness of any of the morphisms in the diagram of Example \ref{quotient stack exmp}, but rather a dimension count, i.e. if $X$ is a finite orbit stack then $\cL X$ is a union of irreducible components of (stacky) dimension 0, which is equal to the expected dimension.\footnote{Though this is well known, let us outline the argument for the uninitiated since it is fundamental.  First, any fiber product $X \times_Y Z$ may be written as the intersection of $X \times Z$ and $X \times Z$ (under two different mappings) in $X \times Y \times Z$, so we may assume our derived fiber product is a derived intersection $X \cap_Y Z$.  Locally, $X, Z$ are cut out by regular sequences since everything is smooth, and since $\codim(X \cap_Y Z) = \codim(X) + \codim(Z)$, the union of the sequence is regular as well, thus the higher $\Tor$'s vanish.}
\end{exmp}

\medskip

\subsubsection{Structures on loops} 
The loop space $\cL X = \Map(S^1, X)$ inherits many familiar algebraic structures from topology, of which we
will focus on the following: 

\begin{enumerate}

\item Restriction to a point $pt \in S^1$ gives {\em  projection to the base-point} $\cL X \to X$. 

\item Pullback along $S^1 \to pt$ gives {\em constant loops} $ X \subset \cL X$.

\item The $S^1$-action on itself by rotation gives  an $S^1$-action  on  $\cL X $ by {\em loop rotation} (see Section \ref{circle actions sec}).

\end{enumerate}

It will not play a role in our development, but it is also worth mentioning the the collapse map $ S^1 \to S^1 \wedge S^1$,  inverse map $ S^1 \to S^1 $, and their compatibilities give a {\em relative group structure on} $ \cL X \to X$.

One of our main goals is to explicate  the loop rotation action concretely. Among a coherent collection of structures, the first order part of such an action is an automorphism of the identity self-map of $\cL X$, i.e., a canonical automorphism of each point of $\cL X$.  In a discrete setting, this structure is concrete and familiar:

\begin{exmp}
For $X = BG$ with $G$ finite, we can identify the adjoint quotient $\cL X = G/G$ with the disjoint union $\coprod_{[\alpha]} BG(\alpha)$ of classifying stacks of centralizers $G(\alpha) \subset G$ of group elements $\alpha$  ranging over conjugacy classes $[\alpha]$.  For each component $BG(\alpha)$, its space of automorphisms is the homotopy quotient $\Aut(G(\alpha))/G$, where $G$ acts by composition with conjugation, and the identity automorphism has stabilizer $Z(G(\alpha))$.  The $S^1$-action on the component $BG(\alpha)$ is then given by the central element $\alpha\in Z(G(\alpha))$, considered as an automorphism of the identity map of $BG(\alpha)$, i.e.  a canonical automorphism of any $G(\alpha)$-bundle.
\end{exmp}
\medskip

\subsubsection{Odd tangent bundles}\label{odd tangent} For typical reasons, we may attempt to understand complicated non-linear objects via their linearizations.  The relevant linearization of the derived loop space is the \emph{odd tangent bundle}
$$\bT_X[-1] = \Spec_X \Sym_X \Omega^1_X$$
where $\Omega^1_X$ is the \emph{cotangent complex} of $X$, i.e., the derived Kahler differentials. The odd tangent bundle may be identified with the normal bundle of the constant loops $X \subset \cL X$. It is a derived vector bundle over $X$, and thus equipped with a natural contracting $\bG_m$-action.

For the linearization of $S^1=B\Z$ itself, we may take 
its affinization $B\bG_a = \Spec \cO(S^1)$, the functor on connective $k$-algebras represented by functions $\cO(S^1) \simeq C^\bullet(S^1; k)$ on $S^1$, i.e.,  $k$-valued cochains on $S^1$.\footnote{Note that since $\cO(S^1)$ is coconnective (and not connective), its ``spectrum'' is not an affine derived scheme but an \emph{affine stack} via  T\"{o}en (or \emph{coaffine stack} via Lurie).}  
The group $\bG_a$ has a canonical contracting action of $\bG_m$ via group homomorphisms inducing a contracting $\bG_m$-action on $B\bG_a$.  In more down-to-earth terms, $\cO(S^1) \simeq C^\bullet(S^1; k)$ is formal as a dg algebra, thus has a canonical internal weight grading agreeing with the cohomological grading.

We will also be interested in  the {\em formal odd tangent bundle} $\wh{\bT}_X[-1]$,
the formal completion of $\bT_X[-1] $ along its zero section. 
The $S^1$-action on $\cL X$ induces $S^1$-actions on $\bT_X[-1]$ and $\wh{\bT}_X[-1]$, and the latter factors through the  affinization map $S^1 \to B\bG_a$ \cite[Prop. 3.2.3]{koszul}. We may organize the  actions on $\wh{\bT}_X[-1]$ by a single action of the semidirect product $B\bG_a \rtimes \bG_m$.

\begin{exmp}
We continue with the examples from Example \ref{quotient stack exmp}.  \\
\indent(1) For $X$ a scheme, the Hochschild-Kostant-Rosenberg (or exponential) map defines an $S^1$-equivariant (factoring through $B\bG_a$-equivariant) equivalence (see Proposition 4.4 of \cite{loops and conns}):
$$\exp: \begin{tikzcd} \bT_X[- 1] \arrow[r, "\simeq"] & \cL X.\end{tikzcd}$$
In particular, when $X$ is a scheme, $\Omega^1_X[1]$ has negative Tor-amplitude, so $X \subset \bT_X[-1]$ is a nilthickening.  
Thus we have $\cL X \simeq \bT_X[-1] \simeq \wh{\bT}_X[-1] $. 
So for $X$ a scheme, the derived loop space $\cL X$ is no more complicated than its linearization.
\\
\indent(2) For $X = BG$ a classifying stack, the odd tangent bundle $\bT_X[-1] = \mf{g}/G$ is the adjoint-equivariant  Lie algebra, and  the formal odd tangent bundle $\wh{\bT}_X[-1] = \widehat{\mathfrak g} /G$  is the adjoint-equivariant formal Lie algebra.
\end{exmp}

\medskip

\subsubsection{Formal and unipotent loops} We saw above that when $X$ is a scheme, the exponential map identified the loop space with the odd tangent bundle.  When $X$ is a stack, the exponential map is not even well-defined: for example, there is no map of algebraic stacks $\exp: \mf{g}/G \rightarrow G/G$.  

To write down an exponential map we need to restrict to the  \emph{formal loop space} $\wh{\cL}X$,  the completion of $\cL X$ along constant loops $X\subset \cL X$.  By Theorem 6.9 of \cite{loops and conns}, there is an $S^1$-equivariant equivalence between the \emph{formal} odd tangent bundle and \emph{formal} loop space:
$$\exp: \begin{tikzcd} \wh{\bT}_X[-1] \arrow[r, "\simeq"] & \wh{\cL} X.\end{tikzcd}$$

\medskip

It is convenient to introduce another, intermediate, variant of the loop space $\cL X$, the \emph{unipotent loop space}:
$$\cL^u X := \Map(B\bG_a, X)$$
Note  $B\bG_a$-action on itself equips  $\cL^u X$ with a canonical $B\bG_a$-action.  The affinization map $S^1 \rightarrow B\bG_a$ induces an $S^1$-equivariant map $\cL^u X \rightarrow \cL X$; in our setting where $X$ is an Artin 1-stack with affine diagonal, this map is a monomorphism (see Proposition 2.1.24 in \cite{Ch}).  

Altogether, we have a sequence of $S^1$-equivariant monomorphisms:
$$\wh{\cL} X \hookrightarrow \cL^u X \hookrightarrow \cL X.$$

\begin{exmp}
We continue with the examples from Example \ref{quotient stack exmp}. \\
\indent(1) If $X$ is a scheme, then the inclusions above are all identities, i.e.
$$\wh{\cL} X = \cL^u X = \cL X$$
That is, the closed inclusion $X \hookrightarrow \cL X$ is a derived thickening. \\
\indent(2) When $X = BG$, the formal loop space is $\wh{\cL}(BG) = \wh{G}_e/G$, i.e. the adjoint-quotient of the formal group, and the unipotent loop space is $\cL^u(BG) = \wh{G}_{\cU}/G$, i.e. the adjoint-quotient of the formal completion along the unipotent cone. \\
\indent(3) The map $Y/G \rightarrow BG$ gives rise to a map $\cL(Y/G) \rightarrow G/G$.  By Propositions 2.1.20 and 2.1.25 of \cite{Ch}, the formal loop space is the completion along the inverse image of $\{e\}/G$, and the unipotent loop space is the completion along the inverse image of $\cU/G$.
\end{exmp}

We summarize the different loop spaces in the following picture:

\small
\begin{equation}\label{loops picture}\begin{tikzcd}[column sep=-5ex]
& \fbox{\begin{minipage}{8em}\begin{center} $X$ scheme  \\ $\wh{\cL} X = \cL^u X = \cL X$
\end{center}\end{minipage}} \arrow[dr] \arrow[dl]  & \\
\fbox{\begin{minipage}{12em}\begin{center} $X$ stack w/ multiplicative automorphisms \\ $\wh{\cL} X = \cL^u X \subsetneq \cL X$ \end{center}\end{minipage}} \arrow[dr] & & \fbox{\begin{minipage}{12em}\begin{center} $X$ stack w/ additive automorphisms \\ $\wh{\cL} X \subsetneq \cL^u X = \cL X$ \end{center}\end{minipage}} \arrow[dl] \\    
&  \fbox{\begin{minipage}{12em}\begin{center}$X$ stack \\ $\wh{\cL} X \subsetneq \cL^u X \subsetneq \cL X$\end{center}\end{minipage}}   &
\end{tikzcd}
\end{equation}
\normalsize


\subsection{Koszul dual description of equivariant sheaves}\label{equiv sheaves}

The main object we would like to study is the category of $S^1$-equivariant coherent sheaves on the loop space $\cL X$.
We defer all technical discussion until Section~\ref{koszul} and aim here simply to state the main results to be invoked. In particular, we refer to Section~\ref{circle actions sec} for the precise definition of 
the category
 (compactly renormalized) $S^1$-equivariant coherent sheaves 
$\IndCoh(\cL X)^{\omega S^1}$  we will work with.

\medskip

\subsubsection{Case of schemes}

Recall from Section \ref{odd tangent} that when $X$ is a scheme, 
the Hochschild-Kostant-Rosenberg (or exponential) map defines an  equivalence
$$\exp: \begin{tikzcd} \bT_X[- 1] \arrow[r, "\simeq"] & \cL X.\end{tikzcd}$$
compatible with the factorization of the $S^1$-action through the $B\bG_a$-action.
Thus we have an equivalence
 $$\begin{tikzcd} \IndCoh(\cL X)^{\omega S^1} \arrow[r, "\simeq"] & \IndCoh(\bT_X[-1])^{\omega B\bG_a}\end{tikzcd}$$ 
 
Recall as well the $B\bG_a$-action on
the right-hand side lifts to a  $B\bG_a\rtimes \bG_m$-action, so  we may likewise consider the graded enhancement $\IndCoh(\bT_X[-1])^{\omega B\bG_a \rtimes \bG_m}$.
Kapranov \cite{kapranov} proved the following version of Koszul duality, which was interpreted in terms of loop spaces in Corollary 5.2 of \cite{loops and conns}.

\begin{thm}[Koszul duality for loop spaces of schemes]
Let $X$ be a smooth scheme, $\cD(X)$ the category of 
 \emph{$\cD$-modules} on $X$,
and
 $F\cD(X)$ the category of \emph{filtered $\cD$-modules} on $X$, i.e., modules for the Rees algebra attached to the order filtration on differential operators.
  
 Then, we have  compatible equivalences:
$$\begin{tikzcd}
{\IndCoh}(\bT_X[-1])^{\Tate \rtimes \bG_m}  \arrow[d, "\simeq"'] & {\IndCoh}(\bT_X[-1])^{\omega B\bG_a \rtimes \bG_m} \arrow[r] \arrow[d, "\simeq"'] \arrow[l] & {\IndCoh}(\bT_X[-1])^{\G_m} \arrow[d, "\simeq"'] \\
{\cD}(X) & F{\cD}(X)  \arrow[r] \arrow[l] & \IndCoh(\mathbb{T}_{X}^*)^{\G_m} 
\end{tikzcd}$$

\end{thm}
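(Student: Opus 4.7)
The plan is to exhibit all three equivalences as arising from a single underlying Koszul duality, organized so that the horizontal arrows on both sides match up. The guiding principle is that the pushforward of $\cO_{\bT_X[-1]}$ to $X$ is $\Sym_X\Omega^1_X$, which as an underlying graded algebra coincides with the de Rham complex $\Omega^\bullet_X$. The $\bG_m$-action on $\bT_X[-1]$ by dilation of fibers records the form-degree grading, while the infinitesimal generator of the $B\bG_a$-action (arising from the $S^1$-action on $\cL X$ via the affinization $S^1\to B\bG_a$, as in Section~\ref{odd tangent}) realizes the de Rham differential $d$.

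First I would establish the rightmost equivalence $\IndCoh(\bT_X[-1])^{\bG_m}\simeq \IndCoh(\mathbb{T}^*_X)^{\bG_m}$. With no $B\bG_a$-action present, both sides are simply categories of graded (ind-)coherent modules over the sheaf of graded algebras $\Sym_X\Omega^1_X$; the cohomological shift distinguishing $\bT_X[-1]$ from $\mathbb{T}^*_X$ is absorbed by a relabeling of the $\bG_m$-weight, using the Koszul sign rule in characteristic zero, giving a tautological equivalence.

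Next I would handle the middle equivalence $\IndCoh(\bT_X[-1])^{\omega B\bG_a\rtimes \bG_m}\simeq F\cD(X)$. The $B\bG_a$-equivariance equips $\Sym_X\Omega^1_X$ with a square-zero degree $1$ endomorphism, which I would identify with the de Rham differential using the explicit model $\cO(B\bG_a)\simeq C^\bullet(S^1;k)$ together with the HKR identification $\cL X\simeq \bT_X[-1]$. Combined with the $\bG_m$-grading, the category becomes equivalent to modules over the filtered dg algebra $(\Omega^\bullet_X,d)$ with its form-degree grading. Kapranov's Koszul duality theorem then identifies this with modules over the Rees algebra of the order filtration on $\cD_X$, namely $F\cD(X)$. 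For the left equivalence $\IndCoh(\bT_X[-1])^{\Tate\rtimes \bG_m}\simeq \cD(X)$, the passage from $\omega B\bG_a$-equivariance to Tate $B\bG_a$-equivariance corresponds to inverting the degree $2$ generator $u\in H^*(B\bG_a;k)$. Under the Koszul duality just established, this matches inverting the Rees parameter $\hbar$ in the filtered picture, which by construction recovers $\cD_X$-modules; the residual $\bG_m$-grading accounts for the Tate structure. Compatibility of the three columns is then automatic, since they all arise from modifying the same Koszul duality by adding or removing equivariant structure: the right-facing horizontal arrows correspond to forgetting $B\bG_a$-equivariance and to taking associated graded, while the left-facing arrows correspond to Tate completion and to inverting~$\hbar$.

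The main obstacle is justifying the identification of the linearized $B\bG_a$-action with the de Rham differential and the resulting Koszul duality with filtered $\cD$-modules; this is precisely Kapranov's theorem, reinterpreted through the loop-space perspective in Corollary 5.2 of the authors' earlier ``loops and conns'' paper cited in the statement, so the proof reduces to assembling these inputs together with the $\bG_m$-graded and Tate refinements into the three-column diagram above.
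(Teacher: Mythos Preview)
The paper does not give a proof of this theorem; it simply attributes the result to Kapranov, as reinterpreted through loop spaces in Corollary~5.2 of \cite{loops and conns}. Your overall strategy---identify the $B\bG_a$-action with the de Rham differential via HKR, then invoke Kapranov's Koszul duality between $\Omega^\bullet_X$-dg-modules and filtered $\cD$-modules, and finally obtain the left and right columns by inverting $u$ (equivalently $\hbar$) and by forgetting the $B\bG_a$-action---is exactly the approach the paper has in mind, and matches the references given.

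There is, however, a genuine error in your treatment of the rightmost column. You assert that both $\IndCoh(\bT_X[-1])^{\bG_m}$ and $\IndCoh(\bT^*_X)^{\bG_m}$ are categories of graded modules over $\Sym_X\Omega^1_X$, so that the equivalence is a tautological regrading. This is false: functions on the cotangent bundle $\bT^*_X$ are $\Sym_X T_X$, not $\Sym_X\Omega^1_X$ (vector fields, not $1$-forms, give linear functions on covectors). The equivalence $\IndCoh(\bT_X[-1])^{\bG_m}\simeq \IndCoh(\bT^*_X)^{\bG_m}$ is not a relabeling but an instance of \emph{linear Koszul duality}: one passes from $\Sym_X(\Omega^1_X[1])$-modules to $\Sym_X(T_X[-2])$-modules via the functor $\intHom(\cO_X,-)$, and then shears to reach $\Sym_X T_X$. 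This is precisely the $\hbar=0$ specialization of Kapranov's theorem (equivalently, the Mirkovi\'c--Riche formulation of linear Koszul duality), and it is what makes the rightmost vertical arrow compatible with the associated-graded functor $F\cD(X)\to \IndCoh(\bT^*_X)^{\bG_m}$ on the bottom row. Your argument for the middle and left columns is fine, but the rightmost column requires this additional Koszul duality step, not merely a shift of weights.
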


\medskip

\subsubsection{Case of stacks} When $X$ is a stack,  Koszul duality alone is only sufficient to describe 
$S^1$-equivariant coherent sheaves on the formal loop space $ \wh \cL X$ or  the unipotent loop space $\cL^u X$. (Recall when $X$ is a scheme, the inclusions are equivalences $\wh \cL X \simeq  \cL^u X \simeq \cL X$.)  The following Koszul duality is a theorem of  \cite{koszul}; see Section~\ref{sec ren} for the  ``renormalized" categories involved. In  particular, the category $$\wh{\IndCoh}(\wh \cL X) = \Ind(\wh{\Coh}(\wh{\cL} X))$$ is a  suitably renormalized version of ind-coherent sheaves. 
\begin{thm}[Koszul duality for loop spaces of stacks]\label{kd stacks thm}
Let $X$ be a smooth geometric stack,
$\breve\cD(X)$ the suitably renormalized category of 
 \emph{$\cD$-modules} on $X$,
and
 $F\cD(X)$ the suitably renormalized category of \emph{filtered $\cD$-modules} on $X$.

   We have compatible equivalences:
$$\begin{tikzcd}
\wh{\IndCoh}(\wh \cL X)^{\Tate \rtimes \bG_m}  \arrow[d, "\simeq"'] & \wh{\IndCoh}(\wh \cL X)^{\omega B\bG_a \rtimes \bG_m} \arrow[r] \arrow[d, "\simeq"'] \arrow[l] & \wh{\IndCoh}(\wh \cL X)^{\G_m} \arrow[d, "\simeq"'] \\
\breve{\cD}(X) & F\breve{\cD}(X)  \arrow[r] \arrow[l] & \IndCoh(\mathbb{T}_{X}^*)^{\G_m} 
\end{tikzcd}$$
  In particular, we have a Koszul duality (in the sense of Section~\ref{graded lifts}):
$$\wh{\IndCoh}(\wh \cL X)^{\Tate} \kos \breve{\cD}(X) \otimes \QCoh(\bG_m).$$
\end{thm}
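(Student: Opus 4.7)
The plan is to reduce the statement to Kapranov's Koszul duality for the odd tangent bundle of a smooth scheme, upgraded to stacks via the exponential identification of the formal loop space with the formal odd tangent bundle. All three columns of the diagram concern the same underlying category with different equivariance structures, so the real content is a single equivalence with compatible $\G_m$ and $B\G_a$ actions; the Tate realization and the final Koszul statement are then formal consequences.

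First, I would invoke the $S^1$-equivariant exponential equivalence $\exp: \wh{\bT}_X[-1] \xrightarrow{\sim} \wh{\cL}X$ (Theorem 6.9 of \cite{loops and conns}), together with the fact that the $S^1$-action on $\wh{\cL}X$ factors through the affinization map $S^1 \to B\G_a$ on the formal loop space (Proposition 3.2.3 of \cite{koszul}). Combined with the contracting $\G_m$-action on $\wh{\bT}_X[-1]$ coming from its structure as the formal completion of a derived vector bundle, this replaces the three top-row categories by $\wh{\IndCoh}(\wh{\bT}_X[-1])$ equipped with the Tate realization, the mixed $B\G_a \rtimes \G_m$ action, and the bare $\G_m$ action respectively.

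Second, for the rightmost equivalence I would apply Koszul duality between $\G_m$-equivariant (renormalized) ind-coherent sheaves on the formal completion of an odd-shifted vector bundle $V[-1]$ and $\G_m$-equivariant ind-coherent sheaves on its linear dual $V^\vee$. Taking $V = \bT_X$ yields $\wh{\IndCoh}(\wh{\bT}_X[-1])^{\G_m} \simeq \IndCoh(\bT^*_X)^{\G_m}$, with the grading on the right matching the order filtration. The middle equivalence then follows once we identify the additional $B\G_a$-equivariance with the Rees algebra of differential operators: invariants for $B\G_a$ over $\IndCoh(\bT^*_X)^{\G_m}$ are computed by a Chevalley--Eilenberg construction whose output is modules for the Rees algebra of $\D_X$, i.e., filtered $\D$-modules. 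The left equivalence is then obtained by passing to Tate realizations, which inverts the equivariant parameter $u$ and turns filtered $\D$-modules into plain $\D$-modules. Finally, forgetting the $\G_m$-grading on the Tate side trades the period $u$ for a free parameter on $\G_m$, giving $\wh{\IndCoh}(\wh{\cL}X)^{\Tate} \kos \breve{\D}(X) \otimes \QCoh(\G_m)$.

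The main obstacle, and the reason all of this requires the qualifier \emph{suitably renormalized}, is that for a stack $X$ the naive category $\IndCoh(\wh{\cL}X)$ fails to encode the full representation theory of stabilizers: at $X = BG$ the constant loops contribute only $\IndCoh(\widehat{\mf g}/G)$, whose compact generators see the formal group but not the full algebraic group. One must enlarge the small generators of $\wh{\Coh}(\wh{\cL}X)$ so that Koszul duality produces exactly the coherent (rather than perfect) sheaves on $\bT^*_X$ expected from a smooth stack, and symmetrically so that the resulting $\D$-module category is $\breve{\D}(X)$ rather than its perfect subcategory. Checking that this renormalization is preserved by the exponential map, by the $\G_m$-contracting Koszul duality, and by Tate realization is the technical heart of the argument, and is the content of \cite{koszul}.
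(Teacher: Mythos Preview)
The paper does not prove this theorem; it simply attributes it to \cite{koszul} and states it as a black box, so there is no in-text argument to compare against. Your sketch is a faithful outline of the strategy one finds there: reduce to the formal odd tangent bundle via the $S^1$-equivariant exponential (with the $S^1$-action factoring through $B\bG_a$), apply graded linear Koszul duality for the right column, identify the added $B\bG_a$-equivariance with the Rees construction for the middle, and pass to Tate for the left; you also correctly flag that the substantive work is the renormalization $\wh{\IndCoh}$ versus $\IndCoh$ and $\breve\cD$ versus $\cD$, which is exactly what \cite{koszul} is devoted to.

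One small sharpening: in your description of the right-hand column, the shear is doing real work. The equivalence is not literally $\wh{\IndCoh}(\wh\bT_X[-1])^{\bG_m}\simeq \IndCoh(\bT^*_X)^{\bG_m}$ but rather passes through $\IndCoh(\bT^*_X[2])^{\bG_m}$ via Koszul duality and then unshears; without tracking this, the compatibility with the $B\bG_a$-structure in the middle column (and hence the match with the order filtration on $\cD_X$) does not come out correctly. Otherwise your summary is accurate.
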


Going further, we do not have to pass to formal loops  $\wh \cL X$, but can work more directly with unipotent loops $\cL^u X$. This is justified by the following theorem to appear in \cite{BCHN2}; see Example \ref{exmp BGa} for a worked example.

\begin{thm}\label{formal is unipotent}
Let $X$ be a geometric stack.  The pullback functor induces an equivalence
$$
\wh{\IndCoh}(\cL^uX)^{\Tate  \rtimes \bG_m} \simeq \wh{\IndCoh}(\wh{\cL}X)^{\Tate \rtimes \bG_m}$$
and hence we have an equivalence
$$
\wh{\IndCoh}(\cL^uX)^{\Tate  \rtimes \bG_m} \simeq \breve\cD(X)$$
\end{thm}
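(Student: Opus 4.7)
The plan is to reduce the first equivalence to a formal-completion statement for sheaves equivariant under a contracting $\bG_m$-action, after which the second equivalence is immediate from Theorem \ref{kd stacks thm}. The basic geometric observation is that on $\cL^u X = \Map(B\bG_a, X)$, the canonical $\bG_m$-action coming from group-automorphism scaling on $B\bG_a$ contracts the unipotent loop space onto the constant loops $X \subset \cL^u X$, and the formal completion of $\cL^u X$ along this $\bG_m$-fixed locus is precisely $\wh{\cL}X$.

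First I would unpack the $(\Tate \rtimes \bG_m)$-action on $\cL^u X$ in terms of the $(B\bG_a \rtimes \bG_m)$-action obtained from the natural action of the semidirect product on $B\bG_a$. On a quotient model $X = Y/G$, where $\cL^u(Y/G) = \{(y,u) \in Y \times \mathcal{U}_G \mid u\cdot y = y\}/G$ with $\mathcal{U}_G \subset G$ the unipotent variety, the $\bG_m$ acts by $t \cdot (y,u) = (y, u^t)$ with $u^t = \exp(t\log u)$. The $\bG_m$-fixed locus is $\{(y,e)\}/G = X$, and every point flows to $X$ as $t \to 0$, confirming that the inclusion $\wh{\cL}X \hookrightarrow \cL^u X$ is exactly the formal completion along the attractor of this contracting $\bG_m$.

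The key input is then a contraction principle for the renormalized equivariant categories: for a reasonable stack $Z$ with a contracting $\bG_m$-action onto its fixed locus $Z^{\bG_m}$, pullback along $\wh{Z}_{Z^{\bG_m}} \hookrightarrow Z$ induces an equivalence
$$\wh{\IndCoh}(Z)^{\bG_m} \simeq \wh{\IndCoh}(\wh{Z}_{Z^{\bG_m}})^{\bG_m}.$$
Applied to $Z = \cL^u X$, this yields the desired $\bG_m$-equivariant comparison, which I would then upgrade to a $(\Tate \rtimes \bG_m)$-equivariant comparison using that both categories inherit the remaining $B\bG_a$-action by restriction from $B\bG_a \rtimes \bG_m$ on $\cL^u X$, and pullback is $(B\bG_a \rtimes \bG_m)$-equivariant. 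Combining with Theorem \ref{kd stacks thm} then gives the identification with $\breve{\cD}(X)$.

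The main obstacle is establishing the contraction principle in the renormalized setting. Standard contracting-$\bG_m$ descent statements (in the spirit of Halpern-Leistner $\Theta$-stratifications) must be adapted to $\wh{\IndCoh}$, whose compact objects involve an extrinsic coherence/boundedness bound; the essential point is that the $\bG_m$-weight filtration forces any equivariant coherent sheaf to be pro-nilpotently supported near the fixed locus, so no information is lost upon completion, but one must carefully propagate the renormalization through this argument. A secondary subtlety is to check that the contracting $\bG_m$ and the Tate structure interact compatibly, but this is essentially formal since both arise from the single $B\bG_a \rtimes \bG_m$-action.
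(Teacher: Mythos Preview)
Your geometric picture matches the paper's narrative --- the contracting $\bG_m$-action on unipotent loops is indeed the structural input --- but your stated ``contraction principle'' is false, and this is a genuine gap rather than a renormalization technicality. Already for $Z = \A^1$ with the scaling $\bG_m$-action, the equivariant sheaf $k[x,x^{-1}]$ is nonzero on $Z$ but pulls back to zero on the formal completion at the origin, so pullback is not an equivalence on $\bG_m$-equivariant categories. In the theorem's setting, taking $X = B\bG_a$ (where $\cL^u X = \cL X = \bG_a/\bG_a$), the same phenomenon occurs with $\cO_{\bG_a \setminus 0}$. Hence you cannot first prove an equivalence at the purely $\bG_m$-equivariant level and then carry the Tate structure along: the Tate localization is what \emph{makes} the equivalence hold. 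This is also why Theorem~\ref{kd stacks thm} has a $\bG_m$-only column for formal loops (identifying with $\IndCoh(\bT^*_X)^{\bG_m}$) that the present theorem does \emph{not} claim for unipotent loops.

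The paper defers the proof to \cite{BCHN2}, but Example~\ref{exmp BGa} indicates the actual mechanism: after Tate localization one is computing a matrix-factorization category, and such categories are supported on the critical locus of the potential. In that example the potential is $xy$ on $\bG_a \times \wh{\bG}_a$, whose singular locus is the origin, so completing at $x=0$ changes nothing. Your argument should be reorganized around this: rather than a general $\bG_m$-contraction lemma for $\wh{\IndCoh}$, one shows that inverting $u$ forces support on the singular locus, and the role of the contracting $\bG_m$ is to pin that locus inside the formal loops. The obstacle you flag as ``propagating renormalization through contracting-$\bG_m$ descent'' is a misdiagnosis of where the difficulty lies.
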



\subsection{Jordan decomposition and equivariant localization}
With the preceding theory for $S^1$-equivariant coherent sheaves on the unipotent loop space $\cL^u X$ in hand, we would like to go further and describe $S^1$-equivariant coherent sheaves on the entire loop space $\cL X$. Our approach will involve factoring loops into semisimple and unipotent parts and then applying the prior theory to the unipotent part as twisted by the semisimple part. 
%

\medskip

\subsubsection{Jordan decomposition of loops}

In this section, we specialize to the case of global quotient stacks $X/G$, where $G$ is a reductive group $G$ (or more generally, an affine algebraic group $G$ with reductive neutral component).  See Section 3 of \cite{Ch} for an extended discussion.   

Consider the ``characteristic polynomial" map 
$$\mu: \LL(BG)=G/G\longrightarrow G//G$$   from the adoint-quotient stack  to the affine quotient scheme, i.e., to the variety parametrizing semisimple conjugacy classes.  Note the pre-image of the class of the identity is precisely
the unipotent cone $\cU_G \subset G$.

For any semisimple $\alpha \in G$, set $[\alpha] := \mu(\alpha) \in G//G $ and write $\bO_\alpha = G \cdot \{\alpha\} \simeq G/G(\alpha) \subset G$ for the conjugacy class of $\alpha$, where $G(\alpha) \subset G$ is the centralizer of $\alpha$. Then we have the Jordan decomposition of group elements viewed as loops
$$\mu^{-1}([\alpha]) \simeq G \times^{G(\alpha)} \{su \in G \mid s \in \bO_\alpha,  u \in \cU_G, su = us\}$$

Now for a scheme $X$ with a $G$-action, by taking fibers of  the natural maps 
$$\LL(X/G)\longrightarrow \LL(pt/G) \simeq G/G\longrightarrow G//G$$ 
we can  define  variants of loop space. In particular,  taking completions over points in $G//G$ gives rise to generalizations of unipotent loop spaces, while taking completions over closed orbits of $G/G$ gives rise to generalizations of formal loop spaces.

\begin{defn}\label{def loop spaces}
Let $G$ be a linear algebraic group with reductive neutral component acting on a prestack $X$.  Let $\bO \subset G$ denote a semisimple adjoint orbit, defining a closed substack $\bO/G \subset G/G \simeq \cL(BG)$, and denote by $[\alpha] =: \mu(\bO) \in G//G$ its class in the affine quotient.  

We define the following loop spaces:
\begin{enumerate}
\item  The \emph{$\alpha$-unipotent loop space} $\LL_\alpha^u(X/G)$ is the completion of $\LL(X/G)$ along the inverse image of $[\alpha] \in G//G$, or equivalently along the saturation\footnote{I.e. the maximal closed substack of $G/G$ containing $\bO$ as its unique closed orbit.} of the orbit $\bO/G$.
\item The \emph{$\alpha$-formal loop space} $\LLf_\alpha(X/G)$ is the completion of $\LL(X/G)$ along the inverse image of the orbit $\bO/G$.
\item The \emph{$\alpha$-specialized loop space} $\LL'_\alpha(X/G) = \cL(X/G) \times_{\cL(BG)} \bO/G$ is the (derived) fiber of $\LL(X/G)$ over $\bO/G$.
\end{enumerate}

Though the notation suggests otherwise, the above constructions only depend on the orbit $\bO$ (equivalently, $[\alpha] \in G//G$) and not a choice of representative (or lift) $\alpha \in \bO$.  If we make such a choice of representative, then we obtain an equivalence $\bO/G \simeq BG(\alpha)$ with the classifying stack of the centralizer of $\alpha$, and we may define the following (which depends on $\alpha$):
\begin{enumerate}
\item[(4)] The \emph{$\alpha$-twisted loop space}\footnote{Also known as the derived fixed points for the self-map of $X$ defined by the action of $\alpha$ on $X$.  In particular, the $\alpha$-twisted loop space on $X$ only requires this self-map as an input and not the entire $G$-action on $X$.} $\cL_\alpha(X)$ is the (derived) fiber of $\cL(X/G)$ along $\Spec k = \{\alpha\} \rightarrow BG(\alpha) \simeq \bO/G \subset G/G$.
\end{enumerate}
\end{defn}

The $S^1$-action on $\cL (X/G)$ restricts to $S^1$-equivariant inclusions 
$$\cL'_\alpha(X/G) \subset \LLf_\alpha(X/G) \subset \cL^u_\alpha(X/G) \subset \cL(X/G).
$$ The twisted loop space $\cL_\alpha(X)$ does not admit an $S^1$-action, but there is a map $\cL_\alpha(X) \rightarrow \cL'_\alpha(X/G)$.

\medskip

\subsubsection{Sheaves via equivariant localization for loop spaces}\label{shvs via eq loc}

We  state here the equivariant localization theorem of~\cite{Ch} describing loops in the quotient stack $X/G$ with given semisimple part $\alpha$ in terms of unipotent loops on the quotient stack $X(\alpha)/G(\alpha)$.  Here 
$X(\alpha)$ denotes the homotopy fixed-points $X^A$ with respect to the closed subgroup $A = \langle \alpha \rangle \subset G$ generated by $\alpha$. We refer to Section~\ref{sec homotopy fixed} below for a more detailed discussion.

Consider the natural map 
$$\ell_\alpha: \cL_\alpha(X(\alpha)/G(\alpha)) \rightarrow \cL_\alpha(X/G)$$
 obtained by applying $\cL_\alpha$ to the natural map $X(\alpha)/G(\alpha) \hookrightarrow X/G(\alpha) \rightarrow X/G$.  

\begin{thm}[Equivariant localization for derived loop spaces]\label{LocThm}
Let $G$ be a linear algebraic group with reductive neutral component acting on a locally Noetherian derived scheme $X$, and $\alpha \in G$ a semisimple element.  Then, the unipotent $\alpha$-localization map $$\ell_\alpha^u: \LL_\alpha^u(X(\alpha)/G(\alpha)) \rightarrow \LL_\alpha^u(X/G)$$ is an $S^1$-equivariant equivalence.  The same is true for the corresponding localization maps on formal and specialized loops
$$\hat{\ell}_\alpha: \LLf_\alpha(X(\alpha)/G(\alpha)) \rightarrow \LLf_\alpha(X/G), \hskip.3in
 \ell'_\alpha: \LL_\alpha'(X(\alpha)/G(\alpha)) \rightarrow \LL_\alpha'(X/G).$$
\end{thm}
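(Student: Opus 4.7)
The plan is to reduce the statement to a derived equivariant localization identifying the $\alpha$-twisted loops $\cL_\alpha(X) = X^\alpha$ with the loop space $\cL X(\alpha)$ of the homotopy fixed points, and then to extend to the formal and unipotent variants via Jordan decomposition. First I would handle the specialized case $\ell'_\alpha$: unpacking Definition~\ref{def loop spaces} via the fiber product description of the loop space in Example~\ref{quotient stack exmp}(3), and using the identification $\bO/G \simeq BG(\alpha)$ after choosing the representative $\alpha$, one identifies the target $\cL'_\alpha(X/G) \simeq X^\alpha/G(\alpha)$, where $X^\alpha := X \times_{X \times X} X$ is the derived intersection of the diagonal with the graph of the $\alpha$-action. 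Since $\alpha \in G(\alpha)$ is central and acts trivially on $X(\alpha)$ by definition, the same unpacking for the source gives $\cL'_\alpha(X(\alpha)/G(\alpha)) \simeq \cL X(\alpha)/G(\alpha)$, because the graph of $\alpha$ on $X(\alpha)$ coincides with the diagonal.

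The core input is then the derived equivariant localization statement: the canonical map $\cL X(\alpha) \to X^\alpha$ is an equivalence of $G(\alpha)$-equivariant derived schemes. I would prove this by reducing to a diagonalizable group action --- the closed subgroup $A := \langle \alpha \rangle \subset G$ is diagonalizable, being generated by a semisimple element in a reductive group --- so that $\Omega^1_X|_{X(\alpha)}$ splits into $A$-character eigenspaces and the conormal bundle $N^\vee_{X(\alpha)/X}$ is the sum of the nontrivial eigenspaces. A local Koszul resolution computation then shows that the derived intersection defining $X^\alpha$ picks up no contribution from the nontrivial weight spaces, since $1 - \alpha$ acts invertibly there (here semisimplicity is essential: every nontrivial character of $A$ takes a value $\neq 1$ at the generator $\alpha$), leaving only the trivial-weight contribution $\Omega^1_{X(\alpha)}[1]$, which recovers the odd tangent bundle presentation $\cL X(\alpha) \simeq \bT_{X(\alpha)}[-1]$.

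For the formal and unipotent variants, I would extend via completion in the group variable. The formal loop space $\LLf_\alpha$ on either side is the formal thickening of the specialized loop space along conormal directions to the orbit $\bO/G$ (respectively $\{\alpha\}/G(\alpha)$); these conormals are both computed by $\fg/\fg(\alpha)$, so $\hat\ell_\alpha$ is an equivalence once one checks the natural identification of these normal structures. For the unipotent case, Jordan decomposition identifies the saturation of $\bO/G$ inside $G/G$ with $G \times^{G(\alpha)} (\alpha \cdot \cU_{G(\alpha)})$, where $\cU_{G(\alpha)}$ is the unipotent cone of $G(\alpha)$; unwinding, both $\cL^u_\alpha(X/G)$ and $\cL^u_\alpha(X(\alpha)/G(\alpha))$ become completions indexed by $\cU_{G(\alpha)}$ of the same underlying data. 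The contracting $\bG_m$-action on $\cU_{G(\alpha)}$ (e.g., via Jacobson--Morozov) then allows one to bootstrap the formal equivalence at the origin of $\cU_{G(\alpha)}$ to the full unipotent-loop equivalence.

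The main obstacle is the derived equivariant localization $\cL X(\alpha) \simeq X^\alpha$, specifically checking it at the level of derived structure and not merely classical schemes: the classical assertion is elementary, but matching the loop-space derived structure on one side with the derived intersection structure on the other requires the Koszul computation above and genuine use of semisimplicity of $\alpha$ in the reductive group $G$.
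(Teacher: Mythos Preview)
Your proposal captures the essential idea for the \emph{smooth} case --- the eigenspace decomposition of $\Omega^1_X|_{X(\alpha)}$ under the diagonalizable group $A=\langle\alpha\rangle$ and the observation that $1-\alpha$ is invertible on nontrivial weight spaces is indeed the mechanism behind the result for smooth $X$, and this is precisely what the paper cites from \cite{Ch}. However, the theorem is stated for an arbitrary locally Noetherian \emph{derived} scheme $X$, and your Koszul argument implicitly assumes the cotangent complex is concentrated in degree zero (your appeal to the odd tangent bundle presentation $\cL X(\alpha)\simeq\bT_{X(\alpha)}[-1]$ and to a splitting of $\Omega^1_X$ is the smooth-scheme story). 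You do not indicate how to extend this to the general derived setting, and this is exactly the content of the paper's proof.

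The paper's argument is different in structure: rather than a direct local computation, it reduces to the smooth case by presenting $X$ locally as an iterated fiber product of smooth affine $A$-schemes. Concretely, after reducing to $\alpha$ central (so $A$ has multiplicative-torus neutral component and $A$-schemes admit $A$-stable affine covers), one chooses an $A$-equivariant semi-free resolution of $\cO(X)$, which exhibits $X$ as a limit of a diagram of smooth affine $A$-schemes. Since both loop spaces and homotopy fixed points commute with limits, the equivalence for the smooth pieces assembles to the equivalence for $X$. This handles all three variants (specialized, formal, unipotent) uniformly, without the separate completion/Jordan-decomposition/contracting-$\bG_m$ bootstrap you propose for the formal and unipotent cases --- that part of your sketch is also vaguer than it needs to be, and is bypassed entirely by the limit argument.
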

\begin{proof}
When $X$ is smooth, the statement is Theorem A in \cite{Ch}.  The general case follows from the following: by the reduction in \emph{loc. cit.} we may assume that $\alpha$ is central, and let $A = \langle \alpha \rangle$.  We first claim that every derived scheme may locally be written as an interated fiber product of smooth affine $A$-schemes.  Since $A$ has neutral component a multiplicative torus, every $A$-scheme has a Zariski cover by $A$-closed opens, so we may assume that $X$ is affine.  Since $X$ is locally Noetherian, we may choose a $A$-equivariant semi-free resolution of $\cO(X)$, i.e. a finite-dimensional graded $A$-subrepresentation $V_0^* \subset \cO(X)$ which generates as a dg algebra, and likewise $V_1^* \subset \cO(V_0)$ generating relations, $V_2 \subset \cO(V_0 \times V_1)$ generating relations between relations, et cetera.  These assemble into a diagram of smooth affine $A$-schemes where the vertical arrows are zero-inclusions and the horizontal arrows are defined by the differentials in the semi-free resolution:
$$\begin{tikzcd}
 & V_0 \arrow[d] & V_0 \times V_1 \arrow[d] & \cdots \arrow[d] \\
V_0 \arrow[r] & V_0 \times V_1 \arrow[r] & V_0 \times V_1 \times V_2 \arrow[r]  & \dots \\
\end{tikzcd}$$
The limit of this diagram is $X$, and since loop spaces and homotopy fixed points commute with limits we may deduce the theorem from the smooth case.
\end{proof}

\medskip


%

Finally, combining equivariant localization with Koszul duality, we have the following. As above, we refer to Section~\ref{sec ren} for details of the various renormalized categories, and Sections \ref{sec central twist} and \ref{sec triv block} for the notion of twisted $S^1$-actions and $\alpha$-trivial blocks.

\begin{thm}
Let $\alpha \in G$ be a semisimple element, and $X/G$ a global quotient stack.   We have compatible equivalences
$$\begin{tikzcd}[column sep=small]
\wh{\IndCoh}(\wh{\cL}_\alpha (X/G))^{\Tate \rtimes \bG_m}_\alpha \arrow[d, "\simeq"'] & \wh{\IndCoh}(\wh{\cL}_\alpha (X/G))^{\omega B\bG_a \rtimes \bG_m}_\alpha \arrow[r] \arrow[d, "\simeq"'] \arrow[l] & \wh{\IndCoh}(\wh{\cL}_\alpha (X/G))_\alpha^{\bG_m} \arrow[d,"\simeq"'] \\
\breve{\cD}(X(\alpha)/G(\alpha))_\alpha & F\breve{\cD}(X(\alpha)/G(\alpha))_\alpha  \arrow[r] \arrow[l] & \QCoh(\bT_{X(\alpha)/G(\alpha)}^*)_\alpha
\end{tikzcd}$$
functorial with respect to smooth pullback and proper pushforward, as well as graded versions.  

In particular, passing through Theorem \ref{formal is unipotent} we have a Koszul duality:
\begin{equation}\label{kd equation}
\wh{\IndCoh}(\cL^u_\alpha(X/G))^{\Tate}_\alpha \simeq \wh{\IndCoh}(\wh{\cL} (X/G))^{\Tate}_\alpha \kos \breve{\cD}(X(\alpha)/G(\alpha))_\alpha.
\end{equation}
\end{thm}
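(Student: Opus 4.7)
The plan is to assemble the three main tools developed earlier in the section into a single diagram of equivalences, keeping careful track of equivariance structures and the twisting by $\alpha$. The architecture of the argument is:  \emph{equivariant localization to center $\alpha$} $\Rightarrow$ \emph{Koszul duality at the identity} $\Rightarrow$ \emph{formal-to-unipotent comparison}.

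\textbf{Step 1 (Localization).} First I would apply Theorem \ref{LocThm} to the formal variant, giving the $S^1$-equivariant equivalence $\hat\ell_\alpha: \LLf_\alpha(X(\alpha)/G(\alpha)) \xrightarrow{\sim} \LLf_\alpha(X/G)$. This pushes the problem onto a stack where $\alpha$ is central in $G(\alpha)$, so that the orbit $\mathbb O$ of $\alpha$ is just the point $\{\alpha\}\subset G(\alpha)$ and translation by $\alpha^{-1}$ identifies $\LLf_\alpha(X(\alpha)/G(\alpha))$ with the ordinary formal loop space $\wh{\cL}(X(\alpha)/G(\alpha))$. Under this translation the loop rotation $S^1$-action picks up a twist by $\alpha$ (an automorphism of the identity of $B G(\alpha)$ given by the central element $\alpha$), which is exactly what is encoded by the $\alpha$-subscript on the equivariant categories (cf.\ Sections~\ref{sec central twist} and \ref{sec triv block}).

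\textbf{Step 2 (Koszul duality at the identity).} With $\alpha$ made central and the translation applied, I would now invoke Theorem \ref{kd stacks thm} for the smooth stack $X(\alpha)/G(\alpha)$. This directly produces the horizontal diagram of equivalences relating $\wh{\IndCoh}(\wh{\cL}(X(\alpha)/G(\alpha)))$ with its various $\Tate$, $B\bG_a$, and $\bG_m$-equivariance structures to $\breve\cD(X(\alpha)/G(\alpha))$, its filtered enhancement $F\breve\cD$, and $\IndCoh(\bT^*_{X(\alpha)/G(\alpha)})^{\bG_m}$. The $\alpha$-twist on the $S^1$-action carried along from Step 1 transports through Koszul duality to the $\alpha$-block decomposition on the D-module side, yielding the subscripted categories as stated. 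Functoriality with respect to smooth pullback and proper pushforward is inherited from the corresponding functoriality already built into Theorem \ref{kd stacks thm} and the naturality of the localization equivalence in Theorem \ref{LocThm}, while the graded versions come from retaining the extra $\bG_m$-action throughout.

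\textbf{Step 3 (Formal to unipotent).} To obtain equation (\ref{kd equation}), I would finally apply Theorem \ref{formal is unipotent} to the stack $X(\alpha)/G(\alpha)$, translated back by $\alpha$, to identify the $\Tate\rtimes\bG_m$-equivariant renormalized ind-coherent sheaves on the unipotent loop space $\cL^u_\alpha(X/G)$ with those on the formal loop space $\wh{\cL}_\alpha(X/G)$. Composing with the equivalence already established then gives the Koszul duality $\wh{\IndCoh}(\cL^u_\alpha(X/G))^{\Tate}_\alpha \kos \breve\cD(X(\alpha)/G(\alpha))_\alpha$ in the sense of Section~\ref{graded lifts}.

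\textbf{Main obstacle.} The substantive bookkeeping — and the place where I would expect the argument to demand real care rather than just assembly of black boxes — is in matching the twist of the $S^1$-action by the central element $\alpha$ with the $\alpha$-block structure on D-modules. This requires verifying that the translation-by-$\alpha$ identification is $S^1$-equivariant only up to a specific monodromy, and that this monodromy is precisely the data that the $\alpha$-subscripted renormalization in Section~\ref{sec triv block} is designed to record. Once this compatibility is pinned down, the functoriality and graded enhancements follow formally, since all three input theorems (\ref{kd stacks thm}, \ref{LocThm}, \ref{formal is unipotent}) are natural in the relevant morphisms of stacks and respect the extra $\bG_m$-weight gradings.
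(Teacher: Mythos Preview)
Your proposal is correct and follows essentially the same approach as the paper, which presents this theorem simply as the combination of equivariant localization (Theorem~\ref{LocThm}), Koszul duality for stacks (Theorem~\ref{kd stacks thm}), and the formal-to-unipotent comparison (Theorem~\ref{formal is unipotent}), with the $\alpha$-twist bookkeeping handled by the machinery of Sections~\ref{sec central twist} and~\ref{sec triv block}. One small sharpening: in your ``Main obstacle'' you describe the shift-by-$\alpha$ as $S^1$-equivariant ``only up to a specific monodromy,'' but the precise statement (the Corollary in Section~\ref{sec central twist}) is that the shifted localization map is genuinely $S^1$-equivariant once one uses the $\alpha$-twisted rotation $\rho(\alpha)$ on the source and the ordinary $\rho$ on the target; the passage to the $\alpha$-trivial block is then what allows the twisted action to factor through $B\bG_a$ so that Koszul duality applies.
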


\medskip

\subsubsection{Twisted Springer sheaves and graded lifts}

We now consider the following situation, and example of the theory we have developed so far.  Let $\mu: \wt{X}/G \rightarrow X/G$ be a proper map of smooth stacks, and let $\cS = \cL\mu_* \cO_{\cL(\wt{X}/G)} \in \Coh(\cL(X/G)$.  In Section \ref{unipotent} this will be the \emph{coherent Springer sheaf}. Letting $\hat{\ell}_\alpha$ denote the localization map, we let 
$$\cS(\alpha) := \hat{\ell}_\alpha^{\,!} \cS \in \wh{\Coh}(\wh{\cL}_\alpha(X(\alpha)/G(\alpha)) \subset \IndCoh(\wh{\cL}_\alpha(X(\alpha)/G(\alpha))$$
denote its $!$-restriction to $\alpha$-formal loops, which is equipped with
\begin{enumerate}[leftmargin=5ex]
\item a canonical $S^1$-equivariant structure coming from the canonical $S^1$-equivariant structure on the dualizing sheaf $\cO_{\cL(\wt{X}/G)} \simeq \omega_{\cL(\wt{X}/G)}$ (see Section \ref{circle actions sec}),
\item a canonical $\alpha$-trivialization, since $\alpha$ acts trivially on the structure sheaf, and therefore a factorization of the $S^1$-equivariant structure through a $B\bG_a$-\hspace{0ex}equivariant structure (see Section \ref{sec triv block}),
\item a canonical graded lift $\wt{\cS}(\alpha)$ coming from the graded lift on $\omega_{\wh{\cL}(\wt{X}(\alpha)/G(\alpha))}$, which comes from the identification of $\wh{\cL}_\alpha(\wt{X}(\alpha)/G(\alpha))$ as an odd tangent bundle (see Section \ref{graded lifts}).\footnote{Though the graded lifts are canonical, they cannot be made globally, only parameter-by-parameter.}
\end{enumerate}

\medskip

Koszul dually, we consider the induced map $\mu^\alpha: \wt{X}(\alpha)/G(\alpha) \rightarrow X(\alpha)/G(\alpha)$.  The object corresponding to the $S^1$-equivariant sheaf $\wt{\cS}(\alpha)$ is the filtered $\cD$-module $\wt{\mathbf{S}}(\alpha) \simeq \mu^\alpha_* \cO_{\wt{X}(\alpha)/G(\alpha)}$ where $\cO_{\wt{X}(\alpha)/G(\alpha)}$ is equipped with its canonical filtration.  We denote the corresponding unfiltered $\cD$-module by  $\mathbf{S}(\alpha)$.  
These objects have (derived) endomomorphism algebras:
$$\cH(\alpha) = \End_{\wh{\cL}_\alpha(X/G)}(\cS(\alpha))^{S^1}, \;\;\;\;\;\;\; \mathbf{H}(\alpha) = \End_{X(\alpha)/G(\alpha)}(\mathbf{S}(\alpha))$$
and likewise $\wt{\cH}(\alpha), \wt{\mathbf{H}}(\alpha)$ for the corresponding graded lifts.  
We have the following immediate corollary of the Koszul duality equivalences.
\begin{cor}\label{aha gha}
View the sheaves $\cS(\alpha), \wt{\cS}(\alpha)$ in the $S^1$-equivariant categories.  We have an equivalence of graded algebras $\wt{\cH}(\alpha)^\shear \simeq \wt{\mathbf{H}}(\alpha)$ and of $\bG_m$-equivariant categories:
$$\langle \wt{\cS}(\alpha) \rangle^{\bG_m, \shear} \simeq \Mod^{\bG_m}(\wt{\cH}(\alpha))^{\shear} \simeq \Mod^{\bG_m}(\wt{\mathbf{H}}(\alpha)) \simeq \langle \wt{\mathbf{S}}(\alpha) \rangle^{\bG_m}$$
and therefore a Koszul duality:
$$\langle \cS(\alpha) \rangle \simeq \Mod(\cH(\alpha)) \kos \Mod(\mathbf{H}(\alpha)) \simeq  \langle {\mathbf{S}}(\alpha) \rangle.$$
\end{cor}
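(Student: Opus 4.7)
The plan is to directly invoke the Koszul duality equivalences of the preceding theorem, tracking the specific sheaves $\wt{\cS}(\alpha)$ and $\wt{\mathbf{S}}(\alpha)$ through the correspondence and extracting the desired consequences for their endomorphism algebras and generated subcategories.

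First, I would verify that under the graded Koszul duality
$$\wh{\IndCoh}(\wh{\cL}_\alpha(X/G))^{\omega B\bG_a \rtimes \bG_m}_\alpha \;\simeq\; F\breve{\cD}(X(\alpha)/G(\alpha))_\alpha,$$
the object $\wt{\cS}(\alpha)$ corresponds to $\wt{\mathbf{S}}(\alpha)$. Since $\cS = \cL\mu_* \cO_{\cL(\wt{X}/G)}$ and the equivalence is functorial with respect to proper pushforward, it suffices to check the correspondence on the structure/dualizing sheaves of the relevant loop spaces of $\wt{X}/G$, whose canonical graded lifts and $\alpha$-trivializations match by construction. Combined with the equivariant localization Theorem \ref{LocThm}, which identifies $\hat{\ell}_\alpha^{\,!}$ with restriction to the fixed-point stack, this identifies $\wt{\cS}(\alpha)$ with the filtered pushforward $\mu^\alpha_* \cO_{\wt{X}(\alpha)/G(\alpha)}$, i.e., $\wt{\mathbf{S}}(\alpha)$.

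Second, because Koszul duality is an equivalence of $k$-linear dg categories, it preserves derived endomorphism algebras, up to the $\shear$ operation which accounts for the regrading intrinsic to Koszul duality between the $\Tate$- and $\bG_m$-equivariant categories (cf.\ Section \ref{graded lifts}). This yields the equivalence of graded algebras $\wt{\cH}(\alpha)^\shear \simeq \wt{\mathbf{H}}(\alpha)$. Under the standard identification $\langle C \rangle \simeq \Mod(\End(C))$ for a compact generator $C$ of its closed subcategory, the algebra equivalence upgrades to the stated equivalence of $\bG_m$-equivariant module categories. Compactness of $\wt{\cS}(\alpha)$ and $\wt{\mathbf{S}}(\alpha)$ follows from properness of $\mu$ together with the smoothness hypotheses.

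Third, to obtain the ungraded statement, I would forget the $\bG_m$-equivariant structure (equivalently, invert the periodicity/Tate parameter), matching the $\Tate$-equivariant side of the loop picture with the $\breve\cD$-module side, via Theorem \ref{formal is unipotent} which lets us replace $\wh{\cL}$ by $\cL^u$. This yields
$$\langle \cS(\alpha) \rangle \;\simeq\; \Mod(\cH(\alpha)) \;\kos\; \Mod(\mathbf{H}(\alpha)) \;\simeq\; \langle \mathbf{S}(\alpha) \rangle.$$
The main obstacle is the first step: correctly matching the graded objects across Koszul duality, i.e., verifying that the coherent pushforward along $\cL\mu$ combined with $!$-restriction along $\hat{\ell}_\alpha$ corresponds on the Koszul dual side to the filtered $\cD$-module pushforward along $\mu^\alpha$, and that the canonical $\alpha$-trivializations and graded lifts are carried across compatibly. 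All the necessary ingredients—functoriality of the equivalence with respect to smooth pullback and proper pushforward, equivariant localization, and the factorization of the $S^1$-action through $B\bG_a$ in the $\alpha$-trivial block—are supplied by the preceding theorem and by Theorem \ref{LocThm}, so the argument reduces to assembling these compatibilities.
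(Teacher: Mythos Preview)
Your proposal is correct and matches the paper's approach: the paper states this as ``the following immediate corollary of the Koszul duality equivalences'' and gives no further proof, so your write-up simply unpacks what ``immediate'' means here—tracking $\wt{\cS}(\alpha)$ and $\wt{\mathbf{S}}(\alpha)$ through the equivalence of the preceding theorem via its functoriality for proper pushforward, reading off the endomorphism algebras, and then de-equivariantizing. There is nothing to add beyond what you have written.
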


\section{Nonarchimedean Local Langlands and Circle Actions}\label{unipotent}

\subsection{Unipotent representations}\label{unipotent reps}

Fix a non-archimedean local field $F$ with residue field $\bF_q$, and a connected split reductive group $G$ over $F$ (though much of the discussion carries over to unramified groups).  Lusztig~\cite{lusztig unipotent} and Solleveld~\cite{soluni}, \cite{soluni2} established a Langlands correspondence for the unipotent representations of such groups, as well as their pure inner forms.  (Recall that a representation $\pi$ of an inner form $G^*$ of $G$ is called unipotent if there exists a parahoric subgroup $P$ of $G$, and a unipotent representation $\tau$ of the $\bF_q$-points of the reductive quotient of $P$, such that the restriction of $\pi$ to $P$ contains the representation $\tau$.)  The unipotent representations coming from a given $G^*$, $P$, and $\pi$ are in bijection with the irreducible representations of the Hecke algebra $\cH_{G^*,P,\pi}$, which is a so-called ``affine Hecke algebra with unequal parameters''.

Lusztig's construction yields a bijection of pairs $(G^*,\pi)$, where $G^*$ is a pure inner form of $G$ and $\pi$ is a unipotent representation, with the set of {\em unipotent extended Langlands parameters} for $G$; that is, the set of $\Gv$-conjugacy classes of triples $(\sigma,n,\chi)$, where $\sigma$ is a semisimple element in $\Gv$, $n \in \cN$ such that $\sigma \cdot n = qn$, and $\chi$ is an irreducible representation of the component group of the $\Gv$-centralizer $\Gv^{\sigma,n}$ of $(\sigma,n)$.

If we fix a semisimple $\sigma$ in $\Gv$, we can define the Vogan variety $\cN^{(\sigma,q)}/\Gv^{\sigma}$ to be the space of $n \in \CN$ such that $\sigma \cdot n = qn$; then a triple $(\sigma,n,\chi)$ determines a local system on the orbit of $n$ in $\cN^{(\sigma,q)}/\Gv^{\sigma}$, and thus a perverse sheaf on $\cN^{(\sigma,q)}/\Gv^{\sigma}$.  Lusztig's argument relates such perverse sheaves to representations of Hecke algebras by identifying the completions of each Hecke algebra $\cH_{G^*,P,\pi}$ at maximal ideals of their centers with the derived endomorphisms of certain perverse sheaves on Vogan varieties $\cN^{\sigma,q}/\Gv^{\sigma}.$

We make this construction explicit in the case of the principal block of $G$; that is, the full subcategory of representations of $G$ generated by their Iwahori fixed vectors.  The irreducible such representations are precisely the irreducible representations of the affine Hecke algebra $\cH_q := \cH_{G, B, \mathrm{triv}}$. The maximal ideals of the center of this Hecke algebra are in bijection with semisimple conjugacy classes in $\Gv$.

Assuming $q$ is not a root of unity, Lusztig's construction realizes completions of the affine Hecke algebra $\cH_q(\sigma)$ at the maximal ideal corresponding to $\sigma \in \Gv$ as Koszul equivalent to the derived endomorphisms of a certain constructible sheaf on $\cN^{(\sigma,q)}/\Gv^{\sigma}$.\footnote{In particular, the former is in cohomological degree 0, and the latter has generators in cohomological degree 2.  See Section \ref{sec graded hecke} and Definition \ref{koszul duality def} for a discussion.}  Concretely, we can define the $\sigma$-Springer sheaf $\bS_q(\sigma)$ to be the pushforward $\mu^{(\sigma,q)}_* \bC_{\wt{\cN}^{(\sigma,q)}}$ along the ``$\sigma$-fixed'' Springer resolution:
$$\mu^{(\sigma,q)}: \wt{\cN}^{(\sigma, q)}/\Gv^\sigma \rightarrow \cN^{(\sigma, q)}/\Gv^\sigma.$$
The BBD decomposition theorem implies a direct sum decomposition
$$\bS_q(\sigma) := \bigoplus_{(\cO, \cL)} E_{\cO,\cL} \otimes \mathbf{IC}(\cO, \cL)[d_{\cO, \cL}].$$
The intersection cohomology sheaves (extended from such local systems) which appear in the $\sigma$-Springer sheaf $\bS_q(\sigma)$ are the principal series representations that appear in the given L-packet.  Furthermore, we have that $\End(\bS_q(\sigma))$ is an algebra whose degree 0 part is the specialization $\cH_{q, [\sigma]}$ of the affine Hecke algebra at the central character given by $[\sigma] \in \Gv//\Gv$, and thus $\cH_{q, [\sigma]}$ acts on simple summands of $\bS_q(\sigma)$.  The Deligne-Langlands correspondence is given by this decomposition, i.e. $E_{\cO, \cL}$ are the Iwahori-invariants of the $G(F)$-representation corresponding to the Langlands parameter given by the $(\cO, \cL)$.

\medskip

\subsection{The stack of unipotent Langlands parameters}

\medskip

An ``algebraic'' or ``families'' perspective on the unramified principal series was developed in~\cite{BCHN}. Namely we consider the stack of \emph{unipotent Langlands parameters} which may be defined in the following equivalent ways.  We fix $q$ to be a prime power, in particular a nonzero integer, morally the order of the residue field of $F$.

\medskip

(1) Explicitly, it is the stack
$$\bL^u_{F,G} = \bL^u_{q,G} := \{n \in \cN_{\Gv}, g\in \Gv \mid  \mathrm{Ad}(g)\cdot n = qn \}/\Gv$$ 
Note that we have dropped the condition on Deligne-Langlands parameter that the image of Frobenius $g$ be semisimple.

\medskip

(2) It is the $q$-twisted loop space\footnote{The first identification uses the fact that $q$ is not a root of unity.}
$$\cL_q(\wh{\cN}_{\Gv}/\Gv) = \cL_q(\check{\mf{g}}/\Gv) = \cL(\check{\mf{g}}/(\Gv \times \bG_m)) \times_{\cL(B\bG_m)} \{q\}.$$
Via the discussion in Definition \ref{def loop spaces}, there is no $S^1$-rotation on the $q$-twisted loop space.  However, there is an $S^1$-action coming from the action on $\wh{\cN}_{\Gv}/\Gv = \cL^u(B\Gv)$, which we do not consider.
Equivalently, we may write $\cL_q(\cN_{\Gv}/\Gv) = \cL_q(\check{U}/\Gv)$, where $q$ acts by exponentiation.

\medskip

(3) It is a substack of the stack of of $\Gv$-local systems $\Loc_{\Gv}(T_q)$ on the $q$-twisted topological torus $T_q$ where we require the monodromy around the non-twisted loop to be unipotent, where $T_q$ is the torus obtained by gluing the two boundaries of the cylinder $S^1 \times [0, 1]$ along the degree $q$ map.  The space $T_q$ has an $S^1$-action in the meridian direction (by ``speeding up the loop'') corresponding to the tame monodromy (which we do not consider), but not in the longitudinal (Frobenius) direction.  Equivalently, we can define $T_q^u$ as the gluing of $B\bG_a \times [0, 1]$ along the scaling by $q$ map, so that $\bL^u_{q,G} = \Loc_{\Gv}(T_q^u)$ (here, $q$ is not required to be an integer).


\medskip

There is a natural map $\mu: \bL^u_{q,B} \rightarrow \bL^u_{q,G}$ and we define the \emph{coherent Springer sheaf} by
$$\cS_q := \mu_* \cO_{\bL^u_{q,B}} \in \Coh(\bL^u_{q,G}).$$
We have the following main theorem of \cite{BCHN}.
\begin{thm}\label{BCHN theorem}
The affine Hecke algebra is naturally isomorphic to the $\Ext$ algebra of the coherent Springer sheaf $\cS_q$, which vanishes in non-zero cohomological degrees.  Therefore we have a full embedding 
$$\begin{tikzcd}[column sep=large]
\cH_q\module \arrow[hook, r, "- \otimes_{\cH_q} \cS_q"] &  \langle \cS_q \rangle \arrow[r, hook] & \QC^!(\bL_{q,G}^u)\end{tikzcd}$$ 
from the unramified principal series of $G(F)$ to ind-coherent sheaves on the stack of unipotent Langlands parameters.
\end{thm}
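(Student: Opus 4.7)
The plan is to reduce the computation of the global $\Ext$-algebra of $\cS_q$ on $\bL^u_{q,G}$ to a local constructible computation at each semisimple conjugacy class $[\sigma] \in \Gv \git \Gv$, using the equivariant-localization and Koszul-duality machinery of Section~\ref{koszul}, and then invoke Lusztig's classical identification of endomorphisms of $\sigma$-Springer sheaves on Vogan varieties with completions of the affine Hecke algebra. The key point is that the characteristic map $\chi : \bL^u_{q,G} \to \Gv \git \Gv$ sheafifies the $\Ext$-algebra over the space of semisimple conjugacy classes; since $\mu$ is proper, $\intEnd(\cS_q)$ pushes down to a sheaf of $\cO(\Gv \git \Gv)$-algebras whose stalks at each $[\sigma]$ compute the local derived endomorphism algebras, so it suffices to identify each stalk with the appropriate completion of $\cH_q$ and to check these identifications glue.

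At a given semisimple $\sigma$, equivariant localization (Theorem~\ref{LocThm}) replaces the $\sigma$-formal loop space $\LLf_\sigma(\check{\mathfrak{g}}/\Gv)$ with the corresponding formal loop space on the $\sigma$-fixed stack $\check{\mathfrak{g}}(\sigma)/\Gv(\sigma)$. The $q$-twist of the conjugation action of $\sigma$ on $\check{\mathfrak{g}}$ cuts out precisely the Vogan variety $\cN^{(\sigma,q)} \subset \check{\mathfrak{g}}^\sigma$, so after localization the geometry is controlled by the stack $\cN^{(\sigma,q)}/\Gv^\sigma$ equipped with its natural Springer resolution $\mu^{(\sigma,q)}: \wt{\cN}^{(\sigma,q)}/\Gv^\sigma \to \cN^{(\sigma,q)}/\Gv^\sigma$; under equivariant localization $\cS_q$ restricts to the coherent Springer sheaf attached to $\mu^{(\sigma,q)}$, in the sense of Corollary~\ref{aha gha}.

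Then Corollary~\ref{aha gha} (Koszul duality in its $S^1$-equivariant form) translates the local coherent endomorphism algebra into the endomorphism algebra of the corresponding filtered $\cD$-module, which under Riemann-Hilbert is the classical constructible $\sigma$-Springer sheaf $\bS_q(\sigma)$ on $\cN^{(\sigma,q)}/\Gv^\sigma$. By Lusztig's theorem~\cite{lusztig unipotent}, together with the formality results cited in Section~\ref{local Langlands intro}, this endomorphism algebra is precisely the completed affine Hecke algebra $\cH_{q,[\sigma]}$, concentrated in cohomological degree zero. Assembling these local identifications over $\Gv \git \Gv$ via the characteristic-map sheafification yields both the isomorphism with the full affine Hecke algebra $\cH_q$ and the vanishing in positive degrees.

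The main obstacle will be matching algebra structures and not merely modules: convolution in $\cH_q$ (in particular the Bernstein presentation) must correspond to composition of $\Ext$s through equivariant localization and Koszul duality, which requires carefully tracking the correspondence defining convolution through both equivalences and across the sheafification over $\Gv \git \Gv$. Once the $\Ext$-vanishing and algebra identification are in hand, the full-embedding statement is automatic: $\cS_q$ is a compact object in $\QC^!(\bL^u_{q,G})$ with endomorphism algebra $\cH_q$ concentrated in degree zero, so the functor $M \mapsto M \otimes_{\cH_q} \cS_q$ is fully faithful onto the subcategory $\langle \cS_q \rangle$ it generates, identifying the unramified principal series $\cH_q\module$ with a distinguished block of ind-coherent sheaves on the stack of unipotent Langlands parameters.
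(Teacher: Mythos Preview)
The paper does not prove this theorem here; it is quoted as the main result of \cite{BCHN}. The argument there is not the localization-plus-Koszul-duality route you sketch but a direct global computation: by proper base change one identifies $\REnd(\cS_q)$ with coherent cohomology of the $q$-twisted Steinberg stack $\bL^u_{q,B}\times_{\bL^u_{q,G}}\bL^u_{q,B}$, and this is then matched with the Kazhdan--Lusztig / Ginzburg description of $\cH_q$ as equivariant $K$-theory of the Steinberg variety. The Ext-vanishing is extracted from this global picture, not assembled from completions.

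Your approach has a genuine gap. The Koszul duality you invoke (Corollary~\ref{aha gha}, and Theorem~\ref{kd stacks thm} behind it) is a statement about $S^1$-equivariant coherent sheaves; but as the paper emphasizes in Section~\ref{unipotent}, the ungraded stack $\bL^u_{q,G}$ carries no loop-rotation $S^1$-action --- only the graded quotient $\bL^u_{\ul{q}}$ does. So the machinery you appeal to does not apply directly to $\End_{\bL^u_{q,G}}(\cS_q)$. Even on the graded stack, Corollary~\ref{aha gha} gives only a \emph{sheared} identification $\wt{\cH}(\alpha)^\shear\simeq\wt{\mathbf H}(\alpha)$; the constructible algebra $\mathbf H(\alpha)$ has generators in cohomological degree~$2$ (cf.\ the footnote in Section~\ref{unipotent reps}), so ``Lusztig's theorem gives degree-$0$ concentration'' is not literally true on the constructible side, and unwinding the shear to deduce degree-$0$ concentration on the coherent side is exactly the nontrivial content you are trying to prove. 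Finally, the gluing step over $\Gv\git\Gv$ is where the actual algebra identification (including the Bernstein presentation) lives, and you flag this as an obstacle without indicating how to overcome it; in \cite{BCHN} this is handled globally from the outset rather than stitched together. In short, what you outline is closer to the content of Section~\ref{sec graded hecke} --- which derives parameter-by-parameter Koszul dualities \emph{from} Theorem~\ref{BCHN theorem} combined with Lusztig's results --- than a proof of the theorem itself.
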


It is thus a natural question to relate our families, or coherent, or algebraic realization of unipotent principal series representations to the specialized, or constructible, or topological realization at parameters given by $\sigma$-Springer theory described above.

\medskip

In the case $G=GL_n$, the Bushnell-Kutzko theory of types, combined with the local Langlands theorem of Harris-Taylor and Henniart, allows us to reduce the entire smooth representation theory of $G$ to the unramified principal series. As a result we deduce that the entire category of smooth representations of $GL_n$ embeds into ind-coherent sheaves on the stack of all Langlands parameters $\bL_{F,GL_n}$, as constructed in~\cite{curtis,DHKM}:

\medskip

\begin{thm}[\cite{BCHN}]\label{BCHN GLn theorem}
There is a full embedding $$D(GL_n(F))\hookrightarrow \QC^!(\bL_{F,GL_n})$$ from the derived category of smooth representations of $GL_n(F)$ into ind-coherent sheaves on the stack of Langlands parameters. The embedding is characterized by taking irreducible cuspidals to skyscrapers at the corresponding Langlands parameters and compatibility with parabolic induction.
\end{thm}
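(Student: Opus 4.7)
The plan is to reduce Theorem~\ref{BCHN GLn theorem} to the unipotent case (Theorem~\ref{BCHN theorem}) via the Bushnell--Kutzko theory of simple types for $GL_n$. First, apply the Bernstein decomposition
$$
D(GL_n(F)) \;=\; \bigoplus_{\mathfrak{s}} D_{\mathfrak{s}}(GL_n(F)),
$$
indexed by inertial equivalence classes $\mathfrak{s}=[M_{\mathfrak{s}},\sigma]$ of supercuspidal supports. For $GL_n$, Bushnell--Kutzko produce a simple type $(J,\lambda)$ for each $\mathfrak{s}$, yielding a Morita equivalence $D_{\mathfrak{s}}(GL_n(F))\simeq \cH(G,\lambda)\module$, where $\cH(G,\lambda)$ is identified with an (extended) Iwahori--Hecke algebra of an endoscopic Levi $M_{\mathfrak{s}}\simeq\prod_i GL_{n_i}(F_i)$ with parameters $q_i=q^{f_i}$ determined by $\sigma$.

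On the spectral side, the local Langlands correspondence of Harris--Taylor and Henniart induces a matching clopen decomposition $\bL_{F,GL_n}=\coprod_{\mathfrak{s}}\bL_{F,GL_n}^{\mathfrak{s}}$ of the stack of Langlands parameters. The crucial geometric input is that each component $\bL_{F,GL_n}^{\mathfrak{s}}$ is canonically isomorphic (up to a central twist by $\sigma$) to the unipotent parameter stack $\bL^u_{q_{\mathfrak{s}},M_{\mathfrak{s}}}$ for the Levi $M_{\mathfrak{s}}$: a Weil--Deligne parameter of fixed inertial type $\mathfrak{s}$ is determined by the image of Frobenius (rigidified by $\sigma$) together with a compatible nilpotent element, which is exactly the data of a unipotent Langlands parameter for $M_{\mathfrak{s}}$.

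Given this match, I would apply Theorem~\ref{BCHN theorem} blockwise to each $M_{\mathfrak{s}}$ at the parameters $q_i$ and assemble the resulting full embeddings into the desired $D(GL_n(F))\hookrightarrow \QC^!(\bL_{F,GL_n})$. Property (i), that irreducible supercuspidals map to skyscrapers at irreducible parameters, is immediate: a supercuspidal block is equivalent to $\Vect$ and the corresponding component of $\bL_{F,GL_n}$ is the classifying stack of the centralizer of a single irreducible parameter. Property (ii), compatibility with parabolic induction, follows from compatibility of the coherent Springer construction with Levi-inclusion maps of parameter stacks. Together (i) and (ii) characterize the embedding because every smooth irreducible representation is a subquotient of a parabolic induction from a supercuspidal. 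The main obstacle, I expect, is the precise matching in the second step: namely verifying that the Hecke algebras $\cH(G,\lambda)$ from Bushnell--Kutzko agree (with the correct $q_i$-parameters and extended structure) with those produced by the coherent Springer sheaf construction on $\bL^u_{q_{\mathfrak{s}},M_{\mathfrak{s}}}$. This step relies crucially on the special structure of simple types available for $GL_n$, and is the reason the argument does not directly extend to other reductive groups, where the Hecke algebras attached to Bernstein blocks need not be affine Hecke algebras of type $A$.
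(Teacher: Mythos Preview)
Your proposal is correct and follows essentially the same approach as the paper: the paper likewise reduces to the unipotent case (Theorem~\ref{BCHN theorem}) via the Bushnell--Kutzko theory of types combined with the local Langlands theorem of Harris--Taylor and Henniart, matching Bernstein blocks with components of the parameter stack. Your write-up in fact supplies more detail than the paper's own brief sketch, and your identification of the key technical point---the precise matching of the type-theoretic Hecke algebras with those arising from the coherent Springer construction on the Levi parameter stacks---is exactly where the work lies.
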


\medskip

\subsection{Loop spaces and cyclic deformation}

An intriguing feature of the stack $\bL^u_q$ of unipotent Langlands parameters is its proximity to the derived loop space of the equivariant nilpotent cone,
$$\cL(\check{\cN}/\Gv\times \Gm)\simeq  \{n \in \check \cN, g\in \Gv, q\in \Gm\; : \; g\cdot n = qn \}/\Gv\times \Gm,$$
and Theorem~\ref{BCHN theorem} has a variant defined over the entire loop space and recovering the affine Hecke algebra with $q\in \Gm$ as a parameter. More significantly, while the stack $\bL^u_q$ -- the fiber over $\{q\}\to \Gm/\Gm$ -- does not inherit the circle action, the stack of {\em graded unipotent Langlands parameters}
$$\bL^u_{\ul{q}}:=\bL^u_q/\Gm$$
--  the fiber over $\{q\}/\Gm\in \Gm/\Gm$ -- does. Moreover Theorem~\ref{BCHN theorem} carries over unchanged over $\bL^u_{\ul{q}}$, and in fact has an $S^1$-equivariant enhancement in which the coherent Springer block deforms {\em trivially} over the equivariant parameter $k[u]=H^*(BS^1)$.

\medskip

\begin{thm}[\cite{BCHN}]\label{cyclic BCHN theorem}
We have a full embedding $$\cH_q[u]\module \simeq \langle \cS_q \rangle \subset \QC^!(\bL^u_{\ul{q}})^{\omega S^1}$$ from the $k[u]$-basechange of the unramified principal series of $G(K)$ to (renormalized) cyclic sheaves on the stack of graded unipotent Langlands parameters.
\end{thm}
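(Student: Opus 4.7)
The proof combines Theorem~\ref{BCHN theorem} with the cyclic/equivariant machinery of Section~\ref{loopspaces}. The plan has three steps: produce a canonical $S^1$-equivariant structure on $\cS_q$ as an object of $\QC^!(\bL^u_{\ul{q}})^{\omega S^1}$; compute its $S^1$-equivariant endomorphism algebra to be $\cH_q[u]$; and conclude by a Morita-style argument.

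First I would identify $\bL^u_{\ul{q}}=\bL^u_q/\Gm$ as an $\alpha$-unipotent loop space in the sense of Definition~\ref{def loop spaces}, applied to $\check{\mf{g}}/(\Gv\times\Gm)$ with $\alpha$ the semisimple class of $q$ in the $\Gm$-factor; the parabolic stack $\bL^u_{\ul{q},B}$ has the same description using $\check{\mf{b}}/(\Bv\times\Gm)$, and the natural map $\mu$ is obtained by applying the $\alpha$-unipotent loop functor to the Grothendieck--Springer resolution. By the discussion preceding Corollary~\ref{aha gha}, the structure/dualizing sheaf of any such unipotent loop space carries a canonical $S^1$-equivariant structure arising from loop rotation, which factors through a $B\bG_a$-structure because $q$ acts trivially on it (the $\alpha$-trivial block). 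Proper pushforward along $\mu$ then lifts $\cS_q$ canonically to a compact object of $\QC^!(\bL^u_{\ul{q}})^{\omega S^1}$.

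Next, by Theorem~\ref{BCHN theorem} one has $\End_{\bL^u_q}(\cS_q) \simeq \cH_q$ concentrated in cohomological degree zero. Passing to $S^1$-equivariant endomorphisms over $\bL^u_{\ul{q}}$ is governed by a descent spectral sequence whose $E_2$-page is the non-equivariant $\Ext$-algebra tensored with $H^\ast(BS^1)=k[u]$. Because the non-equivariant answer sits in a single cohomological degree, all differentials, and more subtly all higher $A_\infty$-Massey products twisting against the positively-graded class $u$, vanish for degree reasons, so one obtains an equivalence of dg algebras
\[
\End^{S^1}_{\bL^u_{\ul{q}}}(\cS_q) \;\simeq\; \cH_q \otimes_k k[u] \;=\; \cH_q[u].
\]
This formality statement is precisely the claim that the coherent Springer block deforms trivially over the equivariant parameter $u$.

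Finally, since $\cS_q$ is a compact generator of the full subcategory $\langle\cS_q\rangle$ it generates in $\QC^!(\bL^u_{\ul{q}})^{\omega S^1}$, the standard identification of such a subcategory with modules over its endomorphism algebra supplies
\[
\langle \cS_q\rangle \;\simeq\; \End^{S^1}_{\bL^u_{\ul{q}}}(\cS_q)\module \;\simeq\; \cH_q[u]\module,
\]
with the composition landing fully faithfully in $\QC^!(\bL^u_{\ul{q}})^{\omega S^1}$ as required. The main obstacle is the formality/trivial-degeneration step of the previous paragraph: while forced at the cohomological level by the degree-zero concentration, promoting it to an equivalence of $E_\infty$-algebras needs a genuine argument, essentially that no higher structure constants can be produced from classes sitting in strictly positive cohomological degree.
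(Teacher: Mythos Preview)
The paper does not contain its own proof of this theorem; it is stated as a citation to~\cite{BCHN}, with the only commentary being the sentence immediately preceding it: ``Theorem~\ref{BCHN theorem} carries over unchanged over $\bL^u_{\ul{q}}$, and in fact has an $S^1$-equivariant enhancement in which the coherent Springer block deforms \emph{trivially} over the equivariant parameter $k[u]=H^*(BS^1)$.'' So there is nothing in this paper to compare your argument against directly.

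That said, your sketch is a reasonable outline of how such a result is established, and matches the shape of what the paper indicates. A few remarks. First, your worry about the formality step is somewhat overstated: once you know $\End_{\bL^u_{\ul{q}}}(\cS_q)\simeq\cH_q$ is concentrated in degree~$0$, the induced $S^1$-action on this endomorphism complex (via the Connes $B$-operator of degree~$-1$) is forced to vanish for degree reasons, so the equivariant endomorphisms are literally $\cH_q\otimes_k H^\ast(BS^1)=\cH_q[u]$ as an associative $k[u]$-algebra, not merely after passing to cohomology. There are no hidden higher operations to chase because the input is already an honest discrete algebra. Second, a small terminological slip: endomorphism algebras are $A_\infty$ (equivalently $E_1$), not $E_\infty$; your final sentence says $E_\infty$ where you mean associative dg algebra. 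Third, be careful that the compact-generator argument in your last paragraph is carried out in the \emph{renormalized} invariants $\QC^!(\bL^u_{\ul{q}})^{\omega S^1}$ of Definition~\ref{renormalized invariants}, where compactness of the lift of $\cS_q$ holds by construction, rather than in the naive $\QCoh(S^1)$-invariants.
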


\subsection{Relation to constructible sheaves and graded Hecke algebras}\label{sec graded hecke}

The unipotent loop space construction of section~\ref{loop spaces} gives us a link between this uniform coherent description of the representation theory of $G$ and the perverse description for fixed infinitesimal characters.  Indeed, if one fixes a semisimple element $\sigma \in \Gv$, one can consider the formal completion $\bL^u_{\underline{q}, [\sigma]}$ of $\bL^u_{\underline{q}}$ along the locus of pairs $(\sigma',n)$ on which the semisimple part of $\sigma'$ lies in the conjugacy class $[\sigma]$ of $\sigma$.  One can then identify this formal completion with the $q$-specialized unipotent loop space $\LL^u_{\underline{q}}(\check{\cN}^{\alpha}/\Gv^{\alpha})$ of the Vogan variety attached to the pair $\alpha := (\sigma, q)$.

Theorem~\ref{kd stacks thm} thus provides us a link between the ($S^1$-equivariant) coherent sheaves on the stack of unipotent Langlands parameters and $\Gv^\alpha$-equivariant\footnote{Since we specialized at $q \in \bG_m$ rather than completed, the resulting $\cD$-modules are $\bG_m$-constructible rather than $\bG_m$-equivariant.  However, $\bG_m$-constructibility is automatic given $\Gv^\alpha$-equivariance; see Remark 3.3.13 in \cite{koszul}.} $\cD$-modules on $\check{\cN}^{\alpha}/\Gv^{\alpha}$; this equivalence carries the $!$-restriction to $\bL^u_{\underline{q}, [\sigma]}$ of the coherent Springer sheaf (henceforth denoted $\cS(\alpha)$) to the $\cD$-module that corresponds to the $\alpha$-Springer sheaf $\bS(\alpha)$. We have the following direct application of Corollary \ref{aha gha} to this setting.
\begin{thm}
Let $\mathbf{H}(\alpha) = \End_{\check{\cN}^\alpha/\Gv^\alpha}(\mathbf{S}(\alpha))$ and $\cH(\alpha)$ denote the completion of the affine Hecke algebra at $\alpha$.  We have a Koszul duality:
$$\wh{\IndCoh}(\bL^u_{\underline{q}, [\sigma]})^{\omega S^1} \supset \langle \cS(\alpha) \rangle \simeq \Mod(\cH(\alpha))  \kos \Mod(\mathbf{H}(\alpha)) \simeq \langle \mathbf{S}(\alpha) \rangle \subset \cD(\check{\cN}^\alpha/\Gv^\alpha).$$
Working 2-periodically, we have an identification of the full subcategories
$$\wh{\IndCoh}(\bL^u_{\underline{q}, [\sigma]})^{\Tate} \supset  \Mod_{k[u,u^{-1}]}(\cH(\alpha)^{per})  \subset \cD(\check{\cN}^\alpha/\Gv^\alpha)^{per}.$$
\end{thm}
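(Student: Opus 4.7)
The plan is to deduce this theorem as essentially an immediate application of the machinery of Sections~\ref{loopspaces} and (more specifically) of Corollary~\ref{aha gha}, once the stack $\bL^u_{\underline{q},[\sigma]}$ and the coherent Springer sheaf on it are identified with the loop-space and Springer data to which that corollary applies.

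First I would identify the geometry. Using the description $\bL^u_{\underline{q}} = \cL_q(\check{\cN}/\Gv)/\bG_m$ (the graded $q$-twisted unipotent loop space of $\check{\cN}/\Gv$), formal completion along the substack of parameters whose semisimple part lies in $[\sigma]$ produces precisely the $\alpha$-formal unipotent loop space construction of Definition~\ref{def loop spaces}, where $\alpha=(\sigma,q)$. Equivariant localization (Theorem~\ref{LocThm}) then gives an $S^1$-equivariant identification
\[
\bL^u_{\underline{q},[\sigma]} \simeq \wh{\cL}_{\alpha}\bigl(\check{\cN}^{\alpha}/\Gv^{\alpha}\bigr),
\]
i.e.\ with the $\alpha$-formal loop space of the Vogan variety attached to $\alpha$. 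Under this identification, the $!$-restriction of the coherent Springer sheaf $\cS_q = \mu_*\cO_{\bL^u_{q,B}}$ along $\hat\ell_\alpha$ is the pushforward $\cS(\alpha)$ of the structure sheaf along the $\alpha$-fixed Springer resolution, which is exactly the setup for Corollary~\ref{aha gha} applied to $\mu^\alpha:\wt{\check{\cN}^\alpha}/\Gv^\alpha\to\check{\cN}^\alpha/\Gv^\alpha$.

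Next I would apply Koszul duality. Theorem~\ref{formal is unipotent} together with the Koszul duality square of Theorem~\ref{kd stacks thm} gives an equivalence
\[
\wh{\IndCoh}\bigl(\bL^u_{\underline{q},[\sigma]}\bigr)^{\Tate} \;\kos\; \breve{\cD}\bigl(\check{\cN}^{\alpha}/\Gv^{\alpha}\bigr)_\alpha,
\]
under which $\cS(\alpha)$ corresponds to the $\cD$-module Springer sheaf $\mathbf{S}(\alpha)=\mu^\alpha_*\cO$. Applying Corollary~\ref{aha gha} then yields an equivalence
\[
\langle \cS(\alpha)\rangle \simeq \Mod(\cH(\alpha)) \;\kos\; \Mod(\mathbf{H}(\alpha)) \simeq \langle \mathbf{S}(\alpha)\rangle,
\]
where $\cH(\alpha)=\End(\cS(\alpha))^{S^1}$ and $\mathbf{H}(\alpha)=\End(\mathbf{S}(\alpha))$.

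The main thing to verify — and in my view the only nontrivial point — is the identification of $\End_{\bL^u_{\underline{q},[\sigma]}}(\cS(\alpha))^{S^1}$ with the completion of the affine Hecke algebra at $\alpha$. For this I would invoke Theorem~\ref{BCHN theorem} (the main result of \cite{BCHN}), which computes $\End^{S^1}$ of the global coherent Springer sheaf on $\bL^u_{\underline{q}}$ as the affine Hecke algebra $\cH_q$, and then note that the center of $\cH_q$ is identified (via the Bernstein presentation) with $\cO(\check{\mathfrak{g}}/\Gv)$, so that $!$-restriction to the formal neighborhood of $[\sigma]$ realizes the completion $\cH_q(\sigma)=\cH(\alpha)$ on endomorphisms. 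The last statement, the $2$-periodic identification
\[
\wh{\IndCoh}(\bL^u_{\underline{q},[\sigma]})^{\Tate} \supset \Mod_{k[u,u^{-1}]}(\cH(\alpha)^{per}) \subset \cD(\check{\cN}^\alpha/\Gv^\alpha)^{per},
\]
then follows automatically by inverting the equivariant parameter $u$ in the $S^1$-equivariant Koszul duality above, i.e.\ by passing from $\omega S^1$-equivariant (or $B\bG_a$-equivariant) to Tate-equivariant sheaves on both sides.
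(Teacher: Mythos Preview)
Your proposal is correct and follows essentially the same approach as the paper: the theorem is stated there as ``a direct application of Corollary~\ref{aha gha} to this setting,'' after the paragraph preceding it identifies $\bL^u_{\underline{q},[\sigma]}$ with the $\alpha$-unipotent loop space of the Vogan variety and explains that $\cS(\alpha)$ corresponds to $\mathbf{S}(\alpha)$ under Koszul duality. Your identification of $\cH(\alpha)$ with the completion of $\cH_q$ via Theorem~\ref{BCHN theorem} and $!$-restriction is exactly the content of the commuting square in Section~\ref{sec ren}. One small correction: the Bernstein center of $\cH_q$ is $\cO(\Hv)^W \simeq \cO(\Gv//\Gv)$ (the multiplicative adjoint quotient), not $\cO(\check{\mathfrak g}/\Gv)$.
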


The algebras $\mathbf{H}(\alpha)$ are shearings of completions of Lusztig's graded Hecke algebras \cite{cuspidal1, gradedAHA}.  We note that the equivariantization with respect to graded lifts, shearing, and de-equivariantization that appears in Koszul duality (as discussed in \ref{graded lifts}) is compatible with the corresponding gradings in statements in \cite{soluni, soluni2}, and that one can avoid such complications by working 2-periodically.

\medskip
\subsection{Periodic cyclic sheaves and pure inner forms}

\subsubsection{The case $G = GL_n$} In the case of $GL_n$, the unramified principal series exhausts all the unipotent representations (and $GL_n$ has only one pure inner form).  Also in the case of $GL_n$, Springer theory simplifies considerably: the Springer sheaf generates all equivariant $\D$-modules sheaves on the nilpotent cone, i.e. the only local systems that appear in the decomposition of the Springer sheaf are trivial, and all such local systems appear. This gives a derived equivalence between the category of $D$-modules $\cD(\cN_{GL_n}/GL_n)$ and modules for the graded Hecke algebra.  Moreover the same is true with arbitrary central character, i.e., for the $(\sigma,q)$-variants of the nilpotent cone. In other words, the image of the coherent Springer sheaf generates the categories of periodic cyclic sheaves when completed at arbitrary parameters.

In~\cite{BCHN2} we show the following:

\medskip

\begin{thm}
For $G=GL_n$, the embedding 
$$D(GL_n) \otimes_k k[u] \subset \QC^!(\bL_{F,GL_n})^{\omega S^1}$$
 becomes an equivalence after inverting $u$,
$$D(GL_n)\ot_k k[u,u\inv] \simeq \QC^!(\bL_{F, GL_n})^{\Tate}.$$
\end{thm}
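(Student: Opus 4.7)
The plan is to start from the fully faithful embedding $D(GL_n)\otimes_k k[u]\hookrightarrow \QC^!(\bL_{F,GL_n})^{\omega S^1}$ already provided by combining Theorem \ref{BCHN GLn theorem} with its $S^1$-equivariant enhancement as in Theorem \ref{cyclic BCHN theorem}, and to prove that this embedding becomes essentially surjective upon inverting $u$. Fully faithfulness is preserved under the $k[u,u^{-1}]$-base change, so the entire content is to show that the image generates $\QC^!(\bL_{F,GL_n})^{\Tate}$ as a cocomplete category.

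I would reduce essential surjectivity to a formally local problem on the spectral side. The Bushnell--Kutzko/Harris--Taylor/Henniart input already used in Theorem \ref{BCHN GLn theorem} lets us test essential surjectivity one Bernstein component at a time, which amounts to testing one formal completion of $\bL_{F,GL_n}$ along a semisimple conjugacy class at a time. For each such completion $\bL^u_{\underline{q},[\sigma]}$, I would combine the Jordan decomposition of loops and equivariant localization (Theorem \ref{LocThm}) with the Koszul duality of Theorem \ref{kd stacks thm}, packaged in Corollary \ref{aha gha}, to identify, after inverting $u$, the $2$-periodic cyclic coherent category with the $2$-periodic category of $\Gv^\alpha$-equivariant $\cD$-modules on the Vogan variety $\check{\cN}^\alpha/\Gv^\alpha$ for $\alpha=(\sigma,q)$.

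The crucial $GL_n$-specific input is the one highlighted in the paragraph preceding the theorem: on every Vogan variety $\check{\cN}^\alpha/\Gv^\alpha$ the Springer sheaf $\mathbf{S}(\alpha)$ generates the whole category $\cD(\check{\cN}^\alpha/\Gv^\alpha)$, because the component groups of centralizers of Langlands parameters for $GL_n$ are trivial, so only trivial local systems appear in the Springer decomposition, and all of them do appear. Under the Koszul equivalence this means that the coherent subcategory generated by $\cS_q(\alpha)$ (i.e.\ the image of the completed affine Hecke algebra modules, equivalently the unramified principal series at $[\sigma]$) is \emph{all} of the $2$-periodic coherent side at $[\sigma]$. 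Composed with the types-reduction across Bernstein components, this proves essential surjectivity formally locally on $\bL_{F,GL_n}$.

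The main obstacle will be globalization: assembling these formal-local equivalences into the stated global equivalence. Concretely, one needs to know that an object of $\QC^!(\bL_{F,GL_n})^{\Tate}$ is detected by its restrictions to formal neighborhoods of semisimple classes, and that the types decomposition of the representation-theoretic side is compatible with the decomposition of $\bL_{F,GL_n}$ over the spectral Bernstein center. I would address this by a conservativity/descent argument: since $\bL_{F,GL_n}$ is set-theoretically exhausted by formal neighborhoods of semisimple classes and since (suitably renormalized) ind-coherent sheaves on the $\Tate$-side satisfy descent for such a formal covering, the formally-local essential surjectivity upgrades to the required global equivalence.
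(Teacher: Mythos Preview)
Your proposal follows essentially the same route as the paper's outline: reduce via types to the Iwahori block, use equivariant localization and Koszul duality to identify the completed Tate categories with $\cD$-modules on Vogan varieties, invoke the $GL_n$-specific fact that the Springer sheaf generates all equivariant $\cD$-modules (trivial component groups), and then globalize. The one place where your framing differs from the paper is the globalization step: you phrase it as a conservativity/descent statement for a formal covering, whereas the paper describes it as a ``fracture theorem'' that relies on explicit finiteness properties of the Springer resolution and the Hecke algebras to pass from generation at each completion to global generation of $\QC^!(\bL^u_{\ul{q}})^{\Tate}$ by $\cS_q$. Your descent formulation is not obviously available in this setting and would itself require justification; the paper's finiteness argument is the actual mechanism, so you should replace the abstract descent claim with that.
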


\medskip

As with Theorem~\ref{BCHN GLn theorem}, standard techniques reduce the assertion to the corresponding statement for the Iwahori block: namely, under the deformation from all coherent sheaves to periodic cyclic sheaves only the coherent Springer block survives. There we establish that the Springer resolution and Hecke algebras enjoy strong enough finiteness properties that we can deduce the generation property of the Springer sheaf $$\langle \cS_q \rangle\simeq \QC^!(\bL^u_{\ul{q}})^{\Tate}$$ from the corresponding generation assertions one completion at a time (a sort of ``fracture theorem"). 

\medskip

\subsubsection{The case of general $G$}

For a general reductive group, there are more perverse sheaves on the nilpotent cone and its $(\sigma,q)$-fixed loci, coming from {\em cuspidal local systems} on Levi subgroups of $\Gv$, as classified by Lusztig (see e.g.~\cite{cuspidal1,cuspidal2}). On the other hand there are also more unipotent representations of $G$ and of its pure inner forms than unramified principal series. As described in Section~\ref{unipotent reps}, the two are matched by the unipotent local Langlands correspondence proved by Lusztig~\cite{lusztig unipotent} for $G$ adjoint, simple and unramified and extended by Solleveld~\cite{soluni, soluni2} to all connected groups.

 One expects (following variants of conjectures of~\cite{xinwen survey,hellmann,fargues}, see forthcoming work~\cite{hemo zhu} of Hemo and Zhu) that the entire category of coherent sheaves on $\bL^u_{q}$ parametrizes unipotent representations of the family of stabilizer groups of $G$-isocrystals.

Inside of this large category one might try to identify the {\em pure} subcategory, parametrizing representations of the family of pure inner forms of $G$ itself. For $G$ semisimple these coincide with the groups (extended pure inner forms) associated to {\em basic} isocrystals, which are those isocrystals whose stabilizers are actually inner forms of $G$. For $G$ general (for example $G=GL_n$) pure inner forms correspond only to a subset of basic isocrystals.  

We propose in \cite{BCHN2} that the cyclic deformation interpolates between all isocrystals and pure inner forms: i.e.,  the category of periodic cyclic sheaves on $\bL_q^u$ is identified with the category of all unipotent representations of pure inner forms of $\Gv$. 

\medskip

\begin{conjecture}\label{unipotent conjecture}
There is a full embedding 
$$\bigoplus_\eta D_{f.g.}(\check{G}_\eta)^u \ot_k k[u] \subset \Coh(\bL^u_{\ul{q}})^{S^1}$$ from the trivial $u$-deformation of the sum of unipotent representation categories of pure inner forms $G_\eta$ of $G$ into cyclic sheaves on graded unipotent Langlands parameters.
Moreover this embedding becomes an equivalence after inverting $u$,
$$\bigoplus_\eta D(\check{G}_\eta)^u\ot_k k[u,u\inv] \simeq \QC^!(\bL^u_{\ul{q}})^{\Tate}.$$
\end{conjecture}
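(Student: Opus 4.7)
The plan is to reduce the conjecture to a statement fiberwise over the affinization $\Gv//\Gv$, match the two sides at each semisimple class $[\sigma]$ via the Jordan decomposition / Koszul duality package of Section~\ref{shvs via eq loc} combined with the Lusztig--Solleveld unipotent local Langlands correspondence, and finally glue using a fracture argument analogous to the $GL_n$ case. This parallels the three-layered structure already visible in the paper: spectral decomposition (Theorem~\ref{LocThm}), local Koszul duality (Theorem~\ref{kd stacks thm} and Corollary~\ref{aha gha}), and global assembly (as used for $GL_n$).

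\textbf{Step 1: blockwise spectral description.} Decompose $\QC^!(\bL^u_{\ul{q}})^{\Tate}$ along its image in $\Gv//\Gv$. At the formal neighborhood of $[\sigma]$, apply Jordan decomposition of loops together with the equivariant localization Theorem~\ref{LocThm} to identify $\bL^u_{\ul{q},[\sigma]}$ with an $\alpha$-unipotent loop space on the Vogan variety $\check{\cN}^{\alpha}/\Gv^{\alpha}$ for $\alpha=(\sigma,q)$. Then apply the 2-periodic form of the Koszul duality of Corollary~\ref{aha gha} (using Theorem~\ref{formal is unipotent} to pass between formal and unipotent loops) to identify the Tate-equivariant block with periodic-cyclic $\breve{\cD}$-modules on $\check{\cN}^{\alpha}/\Gv^{\alpha}$.

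\textbf{Step 2: blockwise automorphic identification.} On the representation-theoretic side, at each $[\sigma]$ apply the generalized Springer correspondence together with Lusztig's~\cite{lusztig unipotent} and Solleveld's~\cite{soluni,soluni2} unipotent local Langlands, upgraded to a derived equivalence by the formality results of~\cite{rider,RiRu1,RiRu2,sol2}. This produces a derived equivalence between the $[\sigma]$-block of $\bigoplus_\eta D(\check{G}_\eta)^u$, summed over pure inner forms, and the category of $\Gv^{\alpha}$-equivariant $\cD$-modules on $\check{\cN}^{\alpha}/\Gv^{\alpha}$ generated by the twisted Springer sheaves attached to cuspidal local systems on Levis of $\Gv^{\sigma}$ coming from pure inner forms. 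Tensoring with $k[u,u^{-1}]$ and comparing with Step~1, one obtains the blockwise equivalence claimed in Conjecture~\ref{unipotent conjecture}. For the first clause of the conjecture (full embedding before inverting $u$), the cyclic enhancement of Theorem~\ref{cyclic BCHN theorem} should extend from the principal block to each cuspidal sub-block by the same mechanism: take $S^1$-equivariant Ext-algebras of the appropriate twisted coherent Springer sheaves and check that they realize the Hecke algebras controlling the block.

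\textbf{Step 3: global assembly.} Glue the blockwise equivalences following the fracture strategy used for $GL_n$. Both sides are defined over $\Gv//\Gv$; the task is to verify compact generation in families and Noetherianity of the relevant Ext-algebras so that completion at the closed points of $\Gv//\Gv$ recovers the global category. The cyclic finiteness assertion embedded in Theorem~\ref{cyclic BCHN theorem} provides the template; what remains is to extend from the principal-series generators to the full collection of cuspidal generators. The main obstacle is exactly the matching of cuspidals with \emph{pure} inner forms: the stack $\bL^u_{\ul{q}}$ a priori sees representations of the stabilizers of all basic (or even all) $G$-isocrystals in Kottwitz' $B(G)$, so one must prove that the periodic cyclic deformation, i.e.\ inversion of $u$, precisely kills the contributions attached to non-pure classes. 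This presumably comes from a contracting $\Gm$-action compatible with the Kottwitz parametrization under which the non-pure cuspidal summands become $u$-torsion; identifying and proving this structural property of the generalized Springer decomposition on Vogan varieties is, in my view, the heart of the matter beyond the machinery already in place.
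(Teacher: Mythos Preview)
The statement you are addressing is explicitly a \emph{conjecture} in the paper, not a theorem; the paper contains no proof of it. There is thus no paper proof to compare against. What the paper does offer is a suggested mechanism different from yours: rather than a blockwise fracture argument over $\Gv\quot\Gv$, the paper proposes (in the paragraph following the statement of the graded unipotent Langlands parameters) to approach the conjecture via the \emph{trace of the identity} on the graded (mixed, in the sense of Ho--Li) affine Hecke category, computed on both sides of Bezrukavnikov's equivalence, extending the Hemo--Zhu trace-of-Frobenius method. This is a global, categorical-trace approach rather than a fiberwise assembly.

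Your proposal is a reasonable outline of a strategy in the spirit of the $GL_n$ theorem, but it is not a proof, and you correctly identify the essential gap yourself: Step~3 requires showing that inverting $u$ kills exactly the contributions from non-pure isocrystals, and you do not supply a mechanism for this beyond a vague hope about a contracting $\Gm$-action. This is genuinely the open problem. Moreover, Step~2 as written is optimistic: the formality results you cite establish derived equivalences block-by-block, but the generalized Springer correspondence for Vogan varieties at arbitrary $(\sigma,q)$ with all cuspidal supports matched precisely to pure inner forms (and not to extended pure inner forms or more general basic isocrystals) is not in the literature in the form you need. The $GL_n$ case works because there is only one pure inner form and no nontrivial cuspidal local systems; for general $G$ both obstructions are live. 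So your proposal should be read as a plausible program, not a proof, and the paper treats it as such.
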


\medskip

In other words, under the deformation from all coherent sheaves to periodic cyclic sheaves we expect only the representations of pure inner forms survive and moreover the deformation is trivial on this subcategory.

\begin{rmk}
The embedding of $S^1$-equivariant sheaves in the above conjecture is not expected to be an equivalence, which can be seen even when $G = T$ is a torus in Example \ref{exmp BT}.  That is, the $S^1$-equivariant category has torsion which is killed by applying inverting $u \in C^\bullet(BS^1; k)$.
\end{rmk}

\begin{problem}
Give an automorphic characterization of the full subcategory of $S^1$-invariant sheaves
$$\IndCoh(\bL^u_{\underline{q}})^{\omega S^1} \otimes_{k[u]} k \subset \IndCoh(\bL^u_{\underline{q}}).$$
\end{problem}

\medskip

\subsection{Whence circle actions?}

It is tempting to propose a cyclic mechanism to relate coherent and constructible forms of the local Langlands correspondence in general, extending the results discussed in the unipotent nonarchimedean setting and paralleling those to be discussed in the archimedean setting.  To do so, at the very least, would require a circle action on stacks of Langlands parameters (thus its category of coherent sheaves).  

\medskip

In the non-Archimedian case the stack of unipotent arithmetic Langlands parameters $\bL^u_q$ (representations of the Weil-Deligne group of $F$) is realized by taking the derived fixed points of a non-trivial automorphism (corresponding to Frobenius on the automorphic side) on the stack of unipotent geometric Langlands parameters $\cN_{\check{G}}/\check{G}$ (Weil-Deligne representations of the inertia subgroup), which does not carry a circle action.  We are able to define a circle action by instead taking an $S^1$-invariant substack of the derived loop space, i.e., the derived fixed points of the identity map, of $\cN_{\check{G}}/(\check{G} \times \bG_m)$, which does carry such an action.  On the level of categories, the trace of the identity operator (Hochschild homology) carries a circle action, while the trace of Frobenius does not. 

\medskip

Let us re-examine the origin of the circle action on the stack of unipotent Langlands parameters, which we first identified from the explicit form of the equations. The moduli space $\bL_q^u$ is ``polynomial in $q$", in the sense that it is the fiber over $q\in \Gm$ of a natural algebraic family over $\Gm$. When we work $\Gm$-equivariantly, the total space of this family became a loop space -- i.e., we recover not $\bL_q^u$ itself but its quotient $\bL_{\ul{q}}^u$ by $\Gm$ by setting the $S^1$-invariant equation ``Frobenius acts by $q$" on a loop space. From the perspective of realizing representations of the affine Hecke algebra, this graded version served equally well, as the coherent Springer sheaf and all of its endomorphisms are $\Gm$-invariant.



\medskip

\subsubsection{Graded unipotent Langlands parameters}  Another way to express the origin of the graded lift in the setting of unipotent Langlands parameters is through the theory of categorical traces, as in~\cite{BNP, xinwen trace, hemo zhu}. Namely the entire category of ind-coherent sheaves on $\bL_q^u$ arises by~\cite{BNP} as the categorical trace of Frobenius acting on the spectral form of the affine Hecke category (the monoidal category of coherent sheaves on the Steinberg stack). Taking $K = \overline{\bF}_q((t))$, Bezrukavnikov's tamely ramified local geometric Langlands correspondence~\cite{roma ICM,roma hecke} gives a monoidal equivalence
$$\cD(I \bs G_F / I) \simeq \IndCoh(\wt{\cN}_{\Gv} \times_{\check{\mf{g}}} \wt{\cN}_{\Gv})$$
between the automorphic and spectral affine Hecke categories, intertwining the pullback by geometric Frobenius with the pushforward by scaling by $q$, whence the trace of Frobenius on the two monoidal categories is identified as well. On the automorphic side Hemo and Zhu~\cite{hemo zhu} relate this trace to unipotent representations of pure inner forms of $G(K)$ (among the more general groups associated to isocrystals).

\medskip

However Bezrukavnikov's equivalence has an expected graded lift (announced in~\cite{HoLi}). On the spectral side this simply involves incorporating $\Gm$-equivariance on Steinberg, whence the trace of Frobenius becomes $\IndCoh(\bL_{\ul{q}}^u)$. On the automorphic side one obtains the ``mixed" affine Hecke category~\cite{BY}, where we replace perverse sheaves by ``Tate" Weil sheaves on the affine flag variety, or more conceptually the graded category of $\ell$-adic sheaves as defined in great generality by Ho and Li~\cite{HoLi}. 

\medskip

Thus one can consider the trace of the identity on the graded affine Hecke category, calculated automorphically and spectrally, as providing a mechanism to extend the results of~\cite{hemo zhu} to a proof of Conjecture~\ref{unipotent conjecture}.

\medskip

\subsubsection{Beyond the unipotent setting}
Our experience in the unipotent setting suggests we should look for $S^1$-actions on categories of $\Gm$-equivariant sheaves on stacks of Langlands parameters. Indeed in the case of $GL_n$, we obtained such an action by a standard reduction to the Iwahori block. In general stacks of Weil-Deligne representations carry natural $\Gm$-actions rescaling the nilpotent endomorphism, and one can construct $S^1$ actions on the quotient by reduction to unipotent cases. 
We summarize our hopes in the following broad list of problems: 


\begin{problem} Relate the coherent and constructible non-archimedean local Langlands correspondences as follows:
\begin{itemize}[leftmargin=5ex]
\item Identify a natural source for $S^1$-actions on the $\Gm$-quotients of stacks of Weil-Deligne representations.
\item Construct an $S^1$-action on the graded form (in the sense of~\cite{HoLi}) of $\ell$-adic sheaves on stacks of $G$-isocrystals.
\item Formulate an $S^1$-equivariant lift of the mixed form of the conjectures of Fargues and Zhu, i.e., an $S^1$-equivariant identification of these graded categories -- see Remark 4.6.8 in~\cite{xinwen survey}. 
\item Compare the full subcategory of $S^1$-equivariantizable sheaves on the automorphic side to the subcategory corresponding to representations of pure inner forms. 
\item Show that the cyclic deformation of the coherent local Langlands correspondence on this subcategory specializes, for fixed infinitesimal character, to (a categorical form) of Vogan's constructible local Langlands correspondence. 
\end{itemize}
\end{problem}

\begin{rmk}
It may be tempting to forget $\bG_m$-equivariance and introduce a $S^1$-action on the trace of an automorphism in general by imposing $\bZ$-equivariance with respect to only that automorphism.  Namely, let $X$ be a stack with an automorphism $\phi$; we may form the stack $X/\phi^{\bZ}$ which is a stack over $B\bZ = S^1$.  Then, its loop space $\cL(X/\phi^{\bZ})$ lives over $\cL(B\bZ) \simeq \bZ \times S^1$.  Taking the fiber over $\{1\} \times S^1$ (put another way, the substack of $\Map(S^1, X/\phi^{\bZ})$ whose composition along $X/\phi^{\bZ} \rightarrow B\bZ = S^1$ is a degree 1 map), we evidently obtain a space with an $S^1$-action.  Unfortunately, the resulting $S^1$-action is free, thus the resulting categories will be entirely $u$-torsion and thus be killed by the $S^1$-localization.  Roughly speaking, the difference between $\bZ$-equivariance and $\bG_m$-equivariance can be seen via the failure of the functor $\Rep(\bG_m) \rightarrow \Rep(\bZ)$ (i.e. pullback along the map $\bZ \rightarrow \bG_m$ which sends $1$ to our chosen automorphism) to be fully faithful; the latter has nontrivial $\Ext^1$ while the former does not.  
\end{rmk}


\section{From Archimedean Local Langlands to Twistor Geometric Langlands}\label{real}
In this section we discuss how to realize the basic paradigm of this paper, relating coherent and constructible forms of the local Langlands correspondence via equivariant localization for circle actions on derived loop spaces, in the archimedean setting, following~\cite{loops and parameters, loops and reps, betti}. To do so we must first describe these two forms of the correspondence. The constructible form we take is a variant on Soergel's conjecture, while the coherent form is given by the tamely ramified geometric Langlands correspondence on the twistor line $\twistor$, which can be viewed as an archimedean counterpart to Fargues' conjecture. 

We begin with the more familiar constructible archimedean local Langlands correspondence (for fixed infinitesimal character), which is a theorem of Adams-Barbasch-Vogan (ABV) on the level of Grothendieck groups and a conjecture of Soergel on the level of derived categories.
In Section~\ref{automorphic real} we describe the automorphic side, realizing the relevant categories of representations of pure inner forms as categories of equivariant constructible sheaves on flag varieties using Kashiwara-Schmid's variant of Beilinson-Bernstein localization. In Section~\ref{spectral real} we present (following~\cite{loops and reps}) a simple stacky description of the spectral side, the ABV geometric parameter spaces~\cite{ABV} (see Mason-Brown's article~\cite[2.2]{MB} in these proceedings). The comparison of the two, Soergel's conjecture, is described in Section~\ref{soergel section}. 

We then proceed to present a ``families" version of archimedean local Langlands, in which we allow the infinitesimal character to vary. 
In Section~\ref{soergel in families} we describe the automorphic (representation theory) side, and in Section~\ref{sec twistor spectral} the spectral side. Namely, we introduce a stack $\bL^\eta$ of Langlands parameters which is the archimedean counterpart of the stacks encountered in the nonarchimedean setting. This stack has an elementary and explicit description reminiscent of the stack of unipotent Langlands parameters, but can also be realized as a stack of local systems on the twistor lime $\twistor$ with tame ramification at infinity, which provides it with a natural circle action as a twisted loop space. We explain how this stack smoothly interpolates between the ABV parameter spaces for varying infinitesimal character, by a form of Jordan decomposition of loops. We then formulate a ``families version" of Soergel's conjecture, identifying smooth representations of pure inner forms (with arbitrary infinitesimal character) with cyclic sheaves on $\bL^\eta$ (i.e., the Tate construction on $S^1$-equivariant coherent sheaves). This conjecture is closely parallel to the nonarchimedean picture described in the previous chapter. 

Finally in Section~\ref{sec twistor automorphic} we explain the automorphic counterpart to the full category of coherent sheaves on $\bL^\eta$.
We formulate the tamely ramified geometric Langlands conjecture for $\twistor$ (following~\cite{loops and parameters, betti}) and explain how it recovers Soergel's conjecture as its periodic cyclic deformation. The underlying geometric mechanism on the automorphic side is the realization of the equivariant flag varieties appearing as the automorphic side of Soergel's conjecture (via the Kashiwara-Schmid description of representations) as the semistable locus (and $S^1$-fixed points) of the stack of parabolic bundles on $\twistor$.

\subsection{Archimedean Local Langlands: Automorphic side}\label{automorphic real}
Let $(G,\theta)$ be a complex reductive group equipped with a quasisplit real form.  The real form $\theta$ gives rise to a collection $\Theta$ of pure inner forms, which arise geometrically in the following way~\cite{loops and reps, bernstein stacks}: consider the Galois-fixed points $BG^\Gamma$ of $BG$ where $\Gamma$ acts by the conjugation $\theta$: we have a decomposition
$$BG^\Gamma\simeq \coprod_{\tau\in \Theta} BG_\tau$$ where $\Theta$ is the set of equivalence classes of pure inner forms of $\theta$ and $G_\tau$ is the corresponding real form (which may appear with multiplicities).

The local Langlands correspondence parametrizes representations for the groups $G_\tau$ as $\tau$ varies over $\Theta$. 
Thus an ultimate goal of the real local Langlands program is to describe the entire dg category
$$\HC_{\Theta}=\bigoplus_{\tau\in \Theta} \HC_{\tau}$$ 
of Harish-Chandra modules for real groups in the pure inner class $\Theta$
in Langlands dual terms. For each $[\lambda]\in \h^*/W$, we write
$\HC_{\tau, [\lambda]}$ for the  dg category of Harish Chandra modules
for the real form $G_{\tau}$ with pro-completed generalized infinitesimal
character~$[\lambda]$, and 
$$\HC_{\Theta,[\lambda]}=\bigoplus_{\tau\in \Theta} \HC_{\tau,[\lambda]}.$$

We now assume that $\lambda$ is regular (see Remark~\ref{singular characters} for comments on the singular case), and let $K_\tau$ denote the complexification of a maximal compact subgroup of $G_\tau$. Then we can apply Beilinson-Bernstein localization to describe the category $\HC_{\tau,[\lambda]}$ of Harish-Chandra $(\fg,K_\tau)$-modules for the real groups $G_\tau$ in terms of $\lambda$-twisted $\D$-modules on the flag variety $G/B$, equivariant for the corresponding complex symmetric subgroup $K_\tau$:
$$\HC_{\tau,[\lambda]}\simeq \cD_\lambda(K_\tau \backslash G/B).$$ 
Applying the Riemann-Hilbert correspondence, this de Rham realization has a Betti counterpart as $\alpha=\exp(2\pi i\lambda)$-twisted constructible sheaves on $K_\tau \backslash G/B$ (see~\cite{kashiwara BB article}).

Kashiwara and Schmid~\cite{KS,kashiwara real} introduced another identification of derived categories of representations with equivariant derived categories on flag varieties, which is more directly related to admissible representations of the groups $G_\tau$ (for example by natural globalization functors):
$$\HC_{\tau,[\lambda]}\simeq \Shv_{\lambda}(G_\tau\backslash G/B).$$
In this realization we replace $K_\tau$-equivariance by $G_\tau$-equivariance, and the identification of the two realizations is provided by the Matsuki correspondence for sheaves~\cite{MUV}. 

We can describe all of these categories for varying $\tau$ simultaneously using homotopy fixed points: we have a natural identification
$$\coprod_{\tau\in \Theta} G_\tau\backslash G/B \simeq (B\backslash G/B)^{\Gamma}$$ where $\Gamma$ acts on $B\backslash G/B\simeq G\backslash(G/B\times G/B)$ by switching the factors composed with the real form $\theta$. In other words, representations of the entire pure inner class $\Theta$ are naturally realized as
$$\HC_{\Theta,[\lambda]}\simeq \Shv_{\lambda}((B\backslash G/B)^{\Gamma}).$$

\subsection{Archimedean Local Langlands: Spectral side}\label{spectral real}

Let $(\Gv,\eta)$ be the Langlands dual group to $(G,\theta)$ with its dual algebraic involution. Introduce the geometric $L$-group $G^L$ associated to $G$ and the conjugation $\theta$ via the Galois-equivariant Satake equivalence~\cite{xinwen ramified, xinwen Satake}. It is an extension
$\Gv \to G^L\to \Gamma$ by the Galois group $\Gamma = \Gal(\C/\R)$ (though not necessarily a semi-direct product).
To avoid subtleties with the definition of the L-group we will assume $G$ is of adjoint type.

\medskip

On the spectral side, the involution $\eta$ likewise gives rise to a collection of involutions in the inner class of $\eta$ parameterized by the set $\Sigma(\eta) := \{\sigma \in \Gv \mid \sigma \eta(\sigma) = e\}/\Gv$, and likewise a decomposition
$$B\Gv^{\Gamma} \simeq \Hom(\Gamma, \check{G} \rtimes \Gamma)/(\check{G} \rtimes \Gamma) \simeq  \{\sigma \in \Gv \mid \sigma \eta(\sigma) = e\}/\Gv \simeq \coprod_{\sigma\in \Sigma(\eta)} BK_\sigma$$
where $K_\sigma:=\Gv^{\iota} \subset \Gv$ is the corresponding symmetric subgroup for the involution $\iota(g)= \tilde{\sigma} \eta(g) \tilde{\sigma}^{-1}$ attached to a representative $[\tilde{\sigma}] = \sigma \in \Sigma(\eta)$.  
\medskip 
 
Let $X(G^L)$ denote the ABV space of geometric parameters. Since the ABV local Langlands correspondence only concerns {\em equivariant} sheaves on $X(G^L)$, it is natural to consider instead the quotient stack $\cX(G^L):=X(G^L)/\Gv$.  The stack $\cX(G^L)$ is a disjoint union of stacks $\cX(\mathbb{O}_\lambda, G^L)$ over semisimple orbits ${\mathbb O}_\lambda =\Gv\cdot \lambda\subset \fgv$.  We write $e(\mathbb O_\lambda)=\exp(2\pi i \mathbb O_\lambda)=\Gv\cdot \alpha \subset \Gv$ with $\alpha=\exp(2\pi i \lambda)$, and $\Pv(\lambda) \subset \Gv(\alpha)$ the associated parabolic in the centralizer.  We write the symmetric subgroup $K_\sigma(\alpha) := P(\lambda)^\sigma$ for
\begin{equation}\label{sigma alpha}
\sigma \in \Sigma(\eta, \alpha) := \{\sigma \in \Gv \mid \sigma \eta(\sigma) = \alpha\}/\Gv(\alpha).
\end{equation}

\medskip

The description of the $\Gv$-orbits~\cite{MB} combined with simple observations about homotopy fixed points gives rise to the following succinct description of the equivariant ABV spaces from~\cite{loops and reps}:

\medskip
\begin{prop}\label{prop abv}
The ABV stack $\cX(\mathbb O_\lambda, G^L)$ for fixed $\lambda$ is identified with the disjoint union
$$\coprod_{\sigma\in \Sigma(\eta, \alpha)} K_\sigma(\alpha)\backslash \Gv(\alpha)/\Pv(\lambda)$$
which in turn is identified with the fixed point stack 
$$(\Pv(\lambda)\backslash \Gv(\alpha)/\Pv(\lambda) )^{\Gamma}$$
where $\Gamma$ is acting by exchanging the factors composed with $\eta$.
\end{prop}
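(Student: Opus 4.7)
\medskip

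\noindent\textbf{Proof proposal.} The plan is to prove the two identifications separately: first the ABV stack with the disjoint union, which is representation-theoretic and invokes the classification of $\Gv$-orbits; then the disjoint union with the $\Gamma$-fixed-point stack, which is a formal computation with homotopy fixed points of a $\Gamma = \Gal(\bC/\R)$-action on a double quotient.

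For the first identification, I would rely on the explicit parametrization of $\Gv$-orbits on $X(G^L, \mathbb{O}_\lambda)$ as presented in Mason-Brown's companion article~\cite[\S 2.2]{MB}, following Adams-Barbasch-Vogan~\cite{ABV}. Their classification asserts that a $\Gv$-orbit on $X(G^L, \mathbb{O}_\lambda)$ is recorded by (i) a class $\sigma \in \Sigma(\eta, \alpha)$, encoding a choice of lift of $\alpha = \exp(2\pi i \lambda)$ to the non-identity component of $G^L$, equivalently a real form class attached to $\alpha$, and (ii) a $K_\sigma(\alpha)$-orbit on the partial flag variety $\Gv(\alpha)/\Pv(\lambda)$. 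Since the equivariant stack $\cX(\mathbb{O}_\lambda, G^L) = X(G^L, \mathbb{O}_\lambda)/\Gv$ records exactly these orbits with their stacky stabilizers, promoting orbits to quotient stacks yields the claimed disjoint union.

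For the second identification, I would use the standard formula for homotopy fixed points of a finite cyclic group acting on a quotient stack: given an action of $\Gamma = \bZ/2$ on $X/H$ covered by compatible actions on $X$ and $H$, the homotopy fixed locus decomposes as
\[
(X/H)^\Gamma \;\simeq\; \coprod_{[\sigma]} X^{\gamma_\sigma}/H^{\gamma_\sigma},
\]
indexed by equivalence classes of cocycles $\sigma \colon \Gamma \to H$ (i.e.\ elements with $\sigma\cdot \gamma(\sigma) = e$ modulo coboundary equivalence $\sigma \sim h\sigma\gamma(h)^{-1}$), where $\gamma_\sigma$ denotes the $\sigma$-twisted $\Gamma$-action. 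Applied to $X = \Gv(\alpha)$ and $H = \Pv(\lambda)\times \Pv(\lambda)$ acting via $(p_1, p_2)\cdot g = p_1 g p_2^{-1}$, with the swap-plus-$\eta$ action appropriately twisted by $\alpha$ so as to encode the Galois datum of $G^L$, a direct unwinding identifies cocycles with representatives $\sigma \in \Gv$ satisfying $\sigma\eta(\sigma) = \alpha$, coboundary equivalence with the conjugation $\sigma \sim g\sigma\eta(g)^{-1}$ for $g \in \Gv(\alpha)$, and the twisted fixed substack with $\Gv(\alpha)^\sigma \backslash \Gv(\alpha)/\Pv(\lambda) = K_\sigma(\alpha)\backslash \Gv(\alpha)/\Pv(\lambda)$.

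The main obstacle I expect is pinning down the precise form of the $\Gamma$-action on the double quotient so that the cocycle set is exactly $\Sigma(\eta, \alpha)$. A naive swap-$\eta$ action (without twist by $\alpha$) produces only cocycles in the $e$-class, as the freedom to rescale the two $\Pv(\lambda)$-factors independently kills all discrete data. The remedy is to recognize that the two $\Pv(\lambda)$-factors should be viewed not as independent copies but as a $\Pv(\lambda)$ together with its $\eta$-image inside $\Gv$, twisted by the semisimple class $\alpha$; with this twist installed, the cocycle condition becomes $\sigma\eta(\sigma) = \alpha$ and the coboundary relation is conjugation by $\Gv(\alpha)$. Once that bookkeeping is fixed, both identifications become formal, and their compatibility reduces to checking that the two parametrizations of orbits agree on fibers, which is automatic because they both produce $K_\sigma(\alpha)$-orbits on the same partial flag variety.
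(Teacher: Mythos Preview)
Your approach matches the paper's: the paper does not prove this proposition in the text but instead points to the orbit description in~\cite{MB} together with ``simple observations about homotopy fixed points,'' deferring details to~\cite{loops and reps}. Your two-step plan (orbit classification for the first identification, the standard cocycle decomposition of $(X/H)^\Gamma$ for the second) is exactly the structure the paper gestures at.

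Your flagged obstacle is a genuine one, and it is worth noting that the paper's own statement does not resolve it either. The phrase ``$\Gamma$ acting by exchanging the factors composed with $\eta$'' does not visibly encode any $\alpha$-dependence, yet the cocycle set $\Sigma(\eta,\alpha) = \{\sigma \in \Gv \mid \sigma\eta(\sigma) = \alpha\}/\Gv(\alpha)$ manifestly depends on $\alpha$ and takes values in $\Gv$ rather than $\Gv(\alpha)$. A naive application of the fixed-point formula to $\Gv(\alpha)\backslash(\Gv(\alpha)/\Pv(\lambda))^2$ with the literal swap-$\eta$ action yields $\{\sigma \in \Gv(\alpha) \mid \sigma\eta(\sigma) = e\}$, which is not what is claimed. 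Your diagnosis---that the $\Gamma$-action must be twisted so as to remember that $\Pv(\lambda)$ sits inside $\Gv$ via the semisimple element $\alpha$, or equivalently that one is really computing fixed points on a component of $(\Bv\backslash\Gv/\Bv)^\Gamma$ lying over the prescribed monodromy---is the right resolution, and is made precise in~\cite{loops and reps}. So your proposal is correct and on the same line as the paper; you have simply been more honest than the paper about where the bookkeeping lies.
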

\medskip

\subsection{Soergel's conjecture}\label{soergel section}
We will now assume that $\mathbb O$ is a {\em regular} semisimple orbit, i.e., that we are parametrizing representations with regular infinitesimal character; see Remark~\ref{singular characters} for discussion of singular characters.
In particular the associated parabolic $P(\lambda)$ in the centralizer $\Gv(\alpha)$ of $\alpha$ is a Borel $\Bv(\alpha)$.
In this case the ABV parametrization is succinctly expressed as describing the Grothendieck group of the category of sheaves on $(\Bv(\alpha)\backslash \Gv(\alpha)/\Bv(\alpha))^\Gamma.$

\medskip

\begin{conj}[Soergel's conjecture,~\cite{loops and reps} formulation]\label{soergel conj}
There is a Koszul duality 
$$\Shv_{\alpha}((B\backslash G/B)^{\Gamma}) \kos \Shv((\Bv(\alpha)\backslash \Gv(\alpha)/\Bv(\alpha))^\Gamma)$$ between the category of Harish-Chandra modules for the pure inner class $\Theta$ with regular infinitesimal character $\lambda$ with exponential $\alpha$ and the category of sheaves on the ABV geometric parameter stacks.
\end{conj}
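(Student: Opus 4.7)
The plan is to derive the conjecture as the periodic cyclic specialization of the tamely ramified geometric Langlands conjecture on the twistor line $\twistor$ (developed later in the paper), mirroring the passage from coherent to constructible local Langlands in the unipotent nonarchimedean case via Theorem~\ref{kd stacks thm}. This conceptual route realizes Soergel's conjecture at a fixed regular infinitesimal character $\lambda$ as the $\alpha = \exp(2\pi i \lambda)$-fiber of a families statement that smoothly interpolates over $\mathbb{O} \subset \Gv$.

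Concretely, the first step is to invoke the twistor version of geometric Langlands, an $S^1$-equivariant equivalence between automorphic sheaves on the stack of real parabolic bundles on $\twistor$ and coherent sheaves on the Langlands parameter stack $\bL^\eta$. The second step is to pass to the formal completion at the $\Gv$-orbit of $\alpha$ on the spectral side and apply Jordan decomposition for loops (Theorem~\ref{LocThm}): this identifies the formal completion of $\bL^\eta$ at $[\alpha]$ with a formal loop space built from the centralizer data $(\Gv(\alpha), \Bv(\alpha))$ together with its induced $\eta$-involution. The third step is to apply Koszul duality for stacky loop spaces (Theorem~\ref{kd stacks thm}) and pass to the periodic cyclic ($\Tate$-equivariant) category; combined with Riemann-Hilbert, this yields constructible sheaves on $(\Bv(\alpha)\backslash \Gv(\alpha)/\Bv(\alpha))^\Gamma$, matching the proposed spectral side via Proposition~\ref{prop abv}.

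On the automorphic side, the same periodic cyclic specialization selects, within the full twistor automorphic category, precisely the block with infinitesimal character $\lambda$ for each pure inner form $G_\tau$, assembling into $\Shv_\alpha((B\backslash G/B)^\Gamma)$ via the Kashiwara-Schmid realization recalled in Section~\ref{automorphic real}. The $S^1$-fixed-point description of semistable parabolic bundles on $\twistor$ identifies the specialized automorphic side with the equivariant flag variety appearing in Soergel's formulation. Matching the two specializations then yields the desired Koszul duality.

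The main obstacle in this approach is the twistor geometric Langlands conjecture itself, which remains open in general. Short of its resolution, a direct strategy would first establish the non-equivariant Koszul duality $\Shv_\alpha(B\backslash G/B) \kos \Shv(\Bv(\alpha)\backslash \Gv(\alpha)/\Bv(\alpha))$ (an $\alpha$-twisted enhancement of Beilinson-Ginzburg-Soergel, accessible by techniques of Bezrukavnikov-Yun) and then upgrade to a $\Gamma$-equivariant equivalence. The key difficulty in the direct route is verifying that the Koszul duality functor intertwines the $\theta$-swap on the automorphic side with the $\eta$-swap on the spectral side, the categorical shadow of Langlands duality for the involutions $(G, \theta)$ and $(\Gv, \eta)$; once this intertwining is secured, taking $\Gamma$-fixed points on both sides and invoking the identifications of Section~\ref{automorphic real} and Proposition~\ref{prop abv} recovers the conjecture.
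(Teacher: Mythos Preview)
The statement is a \emph{conjecture}, and the paper does not prove it; your proposal correctly recognizes this and outlines precisely the paper's own conceptual route. The paper's strategy (Sections~\ref{sec twistor spectral}--\ref{sec twistor automorphic}) is exactly what you describe: on the spectral side, identify the completion of $\bL^\eta$ at $\alpha$ with the unipotent loop space of the ABV stack via Jordan decomposition of loops, then apply Theorem~\ref{formal is unipotent}/\ref{kd stacks thm} to obtain $\IndCoh(\bL^\eta_\alpha)^{\Tate}\simeq \Shv((\Bv(\alpha)\backslash \Gv(\alpha)/\Bv(\alpha))^\Gamma)\otimes_k k[u,u^{-1}]$ unconditionally; on the automorphic side, apply $S^1$-equivariant localization (GKM) to the twistor automorphic category to cut down to $\Shv_\cN((N\backslash G/N)^\Gamma)$; then the twistor geometric Langlands conjecture (Conjecture~\ref{twistor conjecture}) would supply the missing bridge. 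Your acknowledgment that this reduces one open conjecture to another, and your sketch of the alternative direct route via a $\Gamma$-equivariant upgrade of the monoidal Koszul duality of \cite{BY,lusztig yun}, also matches the paper's discussion following the statement of Conjecture~\ref{soergel conj}.
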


\medskip

By {\em Koszul duality} $\cC\kos \cD$ we mean that the categories $\cC$ and $\cD$ admit graded lifts  $\cC_{gr},\cD_{gr}$ (also known as ``mixed versions"), and that we have an equivalence of graded categories $$\cC_{gr}\simeq\cD_{gr}^\shear$$ after a cohomological shear (See Definition \ref{koszul duality def}). Because of the shear the underlying categories $\cC, \cD$ need not be equivalent, but their 2-periodic versions (periodic localizations) are identified,
$$\cC\ot_k k[u,u\inv]\simeq \cD\ot_k k[u,u\inv].$$ Such a Koszul duality induces a duality between the Grothendieck groups of $\cC$ and $\cD$ (see e.g. the introduction of~\cite{RomaKari}), so that Soergel's conjecture produces Vogan's character duality~\cite{vogan duality}. 

\medskip

A stronger form of Conjecture~\ref{soergel conj} prescribes that the equivalence respect natural actions of Hecke categories on both sides, categorifying the Hecke algebra symmetries of the Grothendieck groups of representations (``coherent continuation representations"). Indeed, for $\alpha=1$ the Koszul duality theorem of~\cite{BY} provides a monoidal equivalence
$$\Shv(B\backslash G/B)\kos \Shv(\Bv\backslash \Gv/\Bv)$$ --- i.e., a strong monoidal form of the Koszul duality of~\cite{BGS} or the complex case of Soergel's conjecture (proved in~\cite{soergel}) for trivial monodromy. 
More recently the endoscopic Koszul duality theorem of~\cite{lusztig yun} provides a monoidal equivalence\footnote{For $G$ with connected center, e.g., adjoint type - otherwise need to restrict to the principal block on the left.}
$$\Shv_{\alpha}(B\backslash G/B)\kos \Shv(\Bv(\alpha)\backslash \Gv(\alpha)/\Bv(\alpha)),$$ so a monoidal form of the general (regular) complex case of Soergel's conjecture. The two sides of Conjecture~\ref{soergel conj} are naturally module categories for the two monoidal categories identified here, and the equivalence is expected to respect this structure. 

\begin{remark}[Singular infinitesimal character]\label{singular characters}
For $\lambda$ singular, we need to modify the statement of Conjecture~\ref{soergel conj} as follows. On the spectral side, we replace the Borel $\Bv(\alpha)$ by the parabolic $\Pv(\lambda)$ determined by $\lambda$ as in~\cite{MB}. On the automorphic side, the category of Harish-Chandra modules for $\lambda$ singular is a quotient of the corresponding category of equivariant sheaves on the flag variety, described e.g. in~\cite{kashiwara BB article}. Equivalently, there is a natural projector (an idempotent monad) acting on the category of sheaves whose modules are identified with representations. On the level of Hecke categories (or in the case of a complex group) these matching modifications are the parabolic-singular Koszul duality of~\cite{BGS,BY}.
\end{remark}

\subsection{Representations of real groups in families}\label{soergel in families}
We now recast the automorphic side of Soergel's conjecture, i.e., the category of Harish-Chandra modules
$\HC_{\Theta,[\lambda]}$ for real forms $G_\tau$ in the pure inner class of $\theta$ with regular infinitesimal character $\lambda$. As we described, this is realized by Kashiwara-Schmid localization~\cite{KS,kashiwara real} as $\alpha$-twisted sheaves (for $\alpha=\exp(2\pi i \lambda)$) on
$$\coprod_{\tau\in \Theta} G_\tau\backslash G/B \simeq (B\backslash G/B)^{\Gamma}.$$
In order to incorporate variation with $\alpha$ we first reformulate $\alpha$-twisted sheaves on $G/B$ as sheaves on the torus bundle $G/N\to G/B$ which are locally constant with generalized monodromy $\alpha$ along the fibers. To let $\alpha$ vary we simply drop the monodromicity condition and allow arbitrary sheaves which are locally constant along the fibers. This local constancy can be reformulated as not allowing semisimple directions in the singular support of our sheaves -- i.e., we are considering $\Shv_\cN(G/N)$, sheaves on $G/N$ with nilpotent singular support. 

Thus a families version of the automorphic side of Conjecture~\ref{soergel conj} is provided by a direct sum over the pure inner class  $\tau\in \Theta$ of categories of nilpotent sheaves $\Shv_\cN(G_\tau\backslash G/N)$. 

\subsubsection{From sheaves to representations}\label{sheaves to reps}
It is natural to wonder how the categories $\Shv_\cN(G_\tau\backslash G/N)$ relate to our original motivation, namely, representations of the real groups $G_\tau$. For this we need a form of Kashiwara-Schmid localization with varying central character. 

In the de Rham setting of $\D$-modules, the article~\cite{BZN BB} introduced a version of Beilinson-Bernstein localization for varying infinitesimal characters, replacing twisted $\D$-modules $\D_\alpha(G/B)$ on $G/B$ by weakly $H$-equivariant $\D$-modules $\cD_H(G/N)$ on $G/N$.  The result was an identification 
$$U\fg\module\simeq \cD_H(G/N)^{\mathbb D}$$
of $U\fg$-modules with modules for the Weyl-Demazure monad $\mathbb{D}$, an explicit algebra acting on $\cD_H(G/N)$ enforcing Weyl-group invariance -- i.e., accounting for the descent data from $\D$-modules, which depend on $\lambda\in \fh^*$, to representations,  which depend on $[\lambda]\in \fh^*/W$. Specializing to singular infinitesimal character this recovers the quotient from $\D$-modules to representations discussed in Remark~\ref{singular characters}. 
If we introduce equivariance for symmetric subgroups, we obtain a similar description
$$\HC_{\Theta}\simeq \bigoplus_{\tau\in \Theta} \D_H(K_\tau\backslash G/N)^{\mathbb D}$$
for the entire category of Harish-Chandra modules for the pure inner class $\Theta$. 

\medskip

The Betti category $\Shv_\cN(G/N)$ differs from its de Rham counterpart $\cD_H(G/N)$ by discarding the choice of logarithm $\lambda\in\fh^*$ of the monodromy $\alpha\in \check{H}$ -- this accounts for the difference between 
local systems and flat connections on a torus
$$\Loc(H)\simeq \QC(\check{H}) \;\;\;\;\;\;\mbox{vs.}\;\;\;\;\;\mathcal{C}onn(H) \simeq \QC(\fh^*/X_\bullet(\check{H})).$$
However this distinction can be removed by keeping track of an extra lattice grading, so that in particular we can recover
$U\fg\module$ as algebras over an ``affine Weyl" monad $\mathbb D_{\mathrm{aff}}$ on $\Shv_\cN(G/N)$. Likewise we can recover $$\HC_{\Theta}\simeq \bigoplus_{\tau\in \Theta} \Shv_\cN(G_\tau\backslash G/N)^{\mathbb D_{\mathrm{aff}}},$$
providing a direct link between the automorphic side of Conjecture~\ref{families soergel} and representations of real groups.

\subsection{Twistor Geometric Langlands: Spectral side}\label{sec twistor spectral}
A natural families version of the spectral side of Soergel's conjecture comes from considering Langlands parameters on the twistor line.  We introduced the \emph{twistor Langlands parameter stack} $\bL^\eta$ in the following equivalent ways.
\medskip

(1) Letting $\cB := \Gv/\Bv$ denote the flag variety, it is the stack
$$\bL^\eta := \{(g, \Bv') \in \Gv \times \cB \mid g \eta(g) \in \Bv'\}/\Gv$$
where $\Gv$ acts by $\eta$-twisted conjugation, i.e. $h \cdot (g, \Bv') = (hg\eta(h^{-1}), h\Bv'h^{-1})$.
In other words, $\bL^\eta$ is the fiber product
$$\xymatrix{\bL^\eta \ar[r]\ar[d]& (G^L - \Gv)/\Gv\ar[d]^-{(-)^2}\\
\Bv/\Bv \ar[r] & \Gv/\Gv}$$
of the square map on the second component of $G^L$ with the Grothendieck-Springer resolution  of $\Gv/\Gv$.  There is a monodromy map to the universal Cartan 
$$\chi: \bL^\eta \rightarrow \Bv/[\Bv,\Bv] = \Hv, \;\;\;\;\;\;\;\;g \mapsto g \eta(g) \pmod{[\Bv, \Bv]}.$$

\medskip

(2) We have a description in terms of loop spaces (see Theorem 4.6 of \cite{loops and reps}
$$\bL^\eta \simeq \cL((\Bv \bs \Gv / \Bv)^\Gamma) \simeq \cL(\Bv \bs \Gv / \Bv)^\Gamma$$
where $\eta \in \Gamma = \bZ/2\bZ$ acts by $\eta$ and swapping the two factors, and an $S^1$-action on $\bL^\eta$ from this description.  The monodromy map is encoded by the $S^1$-equivariant map
$$\chi: \cL(\Bv \bs \Gv / \Bv)^\Gamma \rightarrow \cL(B\Bv \times B\Bv)^\Gamma \simeq \Bv/\Bv \rightarrow \Bv//\Bv \simeq \Hv.$$
We also have a description as the $\eta$-twisted loop space
$$\cL((\Bv \bs \Gv / \Bv)^\Gamma) \simeq \cL_\eta((\cB \times \cB)/\check{G}) = \cL((\cB \times \cB)/G^L) \times_{\cL(B\Gamma)} \{\eta\}$$
though this presentation does not a priori inherit an $S^1$-action from the loop space $\cL((\cB \times \cB)/G^L)$ (see Definition \ref{def loop spaces}).\footnote{If we impose $\Gamma$-equivariance, i.e. take the specialized loop space $\cL_\eta'$, there is a circle action, but this action is ``half'' the degree of the action we consider. The $\Gamma$-equivariance allows for a well-defined notion of half-degree.}

\medskip

(3) We also recall (see e.g.~\cite[3.2]{betti}) that $\bL^\eta$ may be identified with $\eta$-twisted parabolic $\Gv$-local systems on the twistor $\twistor$, i.e. the real form defined by $\Pp^1_{\C}$ modulo the $\Gamma$-action by antipodal map $z \mapsto -1/\bar{z}$. Namely, we consider $G^L$-local systems on $\twistor\setminus \infty$ whose induced $\Gamma$-local system is identified with the orientation double cover, and moreover are equipped with a Borel containing the monodromy. Equivalently, these are $\Gv$-local systems on $\Pp^1_{\C} \setminus 0,\infty$ with invariant flags at the two poles, and equipped with a $\Gamma$-equivariant structure respecting the flags.    The description of $\bL^\eta$ makes evident an action of $S^1$, coming from the geometric action of $U(1)$ as symmetries of $(\twistor,\infty)$ (equivalently, rotation of $\Pp^1_{\C}$ along the axis through the poles, which commutes with the anitipodal map) -- in fact the stack of parabolic local systems on $(\Pp^1_{\C},0,\infty)$, the group version of the Steinberg stack, is naturally identified as the loop space $\cL(\Bv\backslash \Gv/\Bv)$.   The monodromy map is given by exactly by the monodromy around $\infty$.


\medskip

For an element $\alpha\in \Hv$ of the universal Cartan, we define the
monodromic twistor parameter space $\bL^\eta_\alpha$ to be the formal completion of $\bL^\eta$ along the fiber over $\alpha$.  A key observation of~\cite{loops and reps} is an identification of $\bL^\eta_\alpha$ as the unipotent loop space of the corresponding ABV space (recall Proposition \ref{prop abv}):\footnote{Recall from (\ref{sigma alpha}) the meaning of $\Sigma(\eta, \alpha)$.}
$$
\bL^\eta_\alpha \simeq  \coprod_{\sigma\in \Sigma({\eta,\alpha})}
\cL^u(K_{\sigma}(\alpha)\backslash \Gv(\alpha)/\Bv(\alpha)) \simeq \cL^u((\Bv(\alpha) \bs \Gv(\alpha) / \Bv(\alpha))^\Gamma)
$$
This identification may be viewed as a special case of equivariant localization for the group $B \times B$, discussed in Example \ref{loc BB}, i.e. it obtained from the loop space $\cL((B \bs G/B)^\Gamma)$ by completing at $\alpha \in B//B = H$.  In particular, $\bL^\eta_\alpha$ comes equipped with a $\Gm$-action contracting it to the ABV space $\cX_\alpha$.  This identification respects circle actions up to an explicit central twist (see Section \ref{sec central twist}), and as a result the categories of cyclic sheaves are identified.
Moreover, applying Theorem \ref{formal is unipotent} identifying Tate sheaves on formal and unipotent loops, as a result of the contracting $\Gm$-action, one can identify cyclic sheaves on $\bL^\eta_\alpha$ directly with filtered $\D$-modules (and, by Riemann-Hilbert on finite orbit stacks, sheaves) on the ABV spaces, i.e., with the spectral side of Soergel's conjecture:\footnote{See Section \ref{circle actions sec} for a discussion of the renormalized Tate construction.}
\begin{thm}
There is a $k[u, u\inv]$-linear equivalence of categories
$$\IndCoh(\bL^\eta_\alpha)^{\Tate} \simeq \Shv((\Bv(\alpha)\backslash \Gv(\alpha)/\Bv(\alpha))^\Gamma)\ot_k k[u,u\inv]$$
between periodic cyclic sheaves on the monodromic twistor Langlands parameter space and the category of sheaves on the $\alpha$-ABV parameter spaces (base changed to $k[u,u\inv]$).
\end{thm}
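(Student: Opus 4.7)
The plan is to chain together the three mechanisms developed in Section~\ref{loopspaces}: the identification of $\bL^\eta_\alpha$ as a unipotent loop space, the comparison of cyclic sheaves on unipotent and formal loops (Theorem~\ref{formal is unipotent}), and Koszul duality (Theorem~\ref{kd stacks thm}) combined with Riemann-Hilbert. Write $X_\alpha := (\Bv(\alpha)\bs \Gv(\alpha)/\Bv(\alpha))^\Gamma$ for the ABV stack.

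First I would establish the $S^1$-equivariant identification $\bL^\eta_\alpha \simeq \cL^u(X_\alpha)$ recalled in the text. This is a special case of the equivariant localization picture of Theorem~\ref{LocThm}, applied to the $B\times B$-action on $G$ with the Jordan decomposition along the monodromy map $\chi: \bL^\eta \to \Hv$: completing $\cL((\Bv\bs\Gv/\Bv)^\Gamma)$ at the semisimple class $\alpha \in \Hv = \Bv\git\Bv$ cuts out precisely the $\alpha$-unipotent locus of loops in the ABV stack. Crucially, one must verify that the $S^1$-action inherited from the twistor description of $\bL^\eta$ agrees (up to the central twist of Section~\ref{sec central twist}) with the loop rotation on $\cL^u(X_\alpha)$; this amounts to tracking how the antipodal $\Gamma$-action interacts with loop rotation on $\cL((\cB\times\cB)/\Gv)$ and introduces the appropriate $\alpha$-twisted block.

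Next I would invoke Theorem~\ref{formal is unipotent} to pass from $\cL^u(X_\alpha)$ to $\wh{\cL}(X_\alpha)$: the contracting $\Gm$-action on unipotent loops is what makes $\wh{\IndCoh}(\cL^u X_\alpha)^{\Tate\rtimes \Gm} \simeq \wh{\IndCoh}(\wh{\cL} X_\alpha)^{\Tate\rtimes \Gm}$. Then Theorem~\ref{kd stacks thm}, in its $\alpha$-twisted form, produces a Koszul duality
\[
\wh{\IndCoh}(\wh{\cL} X_\alpha)^{\Tate}_\alpha \;\kos\; \breve{\cD}(X_\alpha)_\alpha \otimes \QCoh(\Gm),
\]
where the $\alpha$-block reflects the central twist of the circle action. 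Inverting the equivariant parameter $u$ collapses the shearing and discards the remaining graded structure, so on $2$-periodic pieces the Koszul duality becomes a plain equivalence of $k[u,u\inv]$-linear categories between $\IndCoh(\bL^\eta_\alpha)^{\Tate}$ and $\breve{\cD}(X_\alpha)_\alpha \otimes_k k[u,u\inv]$. Since $X_\alpha$ is a finite orbit stack, I would apply Riemann-Hilbert in the strongly equivariant setting (which as noted in the introduction requires no regularity modifiers in this regime) to replace $\breve{\cD}(X_\alpha)$ by the category of constructible sheaves $\Shv(X_\alpha)$, yielding the claimed equivalence.

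The main obstacle is bookkeeping rather than any single hard step: matching the $S^1$-equivariant structures on the two sides of the identification $\bL^\eta_\alpha \simeq \cL^u(X_\alpha)$ requires careful computation of the central twist, because one circle action comes from geometric rotation of $\twistor$ while the other comes from loop rotation on an iterated fixed-point stack, and only after accounting for this twist does the $\alpha$-block of cyclic sheaves match the untwisted block on the ABV side. Secondary technical care is needed to ensure that the renormalized categories $\wh{\IndCoh}$ are the correct ones to apply Theorems~\ref{kd stacks thm} and~\ref{formal is unipotent}, and that the passage to the Tate construction is compatible with Riemann-Hilbert; however, once the central twist is pinned down the remaining assembly is essentially formal given the machinery of Section~\ref{loopspaces}.
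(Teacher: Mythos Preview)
Your proposal is correct and follows essentially the same route as the paper: identify $\bL^\eta_\alpha$ with $\cL^u(X_\alpha)$ via equivariant localization for $B\times B$ (with the central twist of the circle action), pass from unipotent to formal loops by Theorem~\ref{formal is unipotent}, apply Koszul duality (Theorem~\ref{kd stacks thm}) to land in $\breve{\cD}(X_\alpha)$, and finish with Riemann-Hilbert on the finite orbit stack $X_\alpha$. Your identification of the central twist as the only genuinely delicate point matches the paper's emphasis.
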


\medskip


Thus we have a families version of the spectral side of Soergel's Conjecture to compare with the automorphic counterpart given by nilpotent sheaves on $G/N$. We encode this expectation in the following conjecture (postponing for the moment natural compatibilities with Hecke actions):

\medskip

\begin{conj}[Families Soergel Conjecture]\label{families soergel}
There is an equivalence of categories 
$$\Shv_\cN((N\backslash G/N)^{\Gamma})\ot_k k[u,u\inv] \simeq  \QC^!(\bL^\eta)^{\Tate}.$$
\end{conj}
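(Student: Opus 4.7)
The plan is to deduce the families statement from the already-established ``pointwise'' statement (the theorem quoted just before the conjecture) by a gluing argument over the monodromy base $\Hv$. Both sides of the proposed equivalence are naturally fibered: the spectral side $\QC^!(\bL^\eta)^{\Tate}$ over $\Hv$ via the monodromy map $\chi\colon \bL^\eta\to \Hv$, and the automorphic side $\Shv_\cN((N\bs G/N)^\Gamma)\ot k[u,u\inv]$ over $\Hv$ via generalized monodromy along the fibers of $G/N\to G/B$ (where the nilpotent singular support condition is precisely what lets the monodromy vary). The approach is to construct a functor between the two sides respecting the $\Hv$-family structure, then check it is an equivalence by combining fiber-wise equivalences with a suitable fracture theorem.

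First I would construct the functor via the loop space presentation $\bL^\eta \simeq \cL((\Bv\bs\Gv/\Bv)^\Gamma)$ of Section~\ref{sec twistor spectral}. Following the pattern of Theorem~\ref{BCHN theorem} (and its cyclic enhancement Theorem~\ref{cyclic BCHN theorem}), one constructs a universal ``coherent Soergel'' object on $\bL^\eta$ whose ($S^1$-equivariant) endomorphism algebra, after Tate localization, matches the symmetries of the automorphic category. A cleaner route is to first establish the $\Gamma$-equivariant monoidal families Koszul duality
\[
\QC^!\bigl(\cL(\Bv\bs\Gv/\Bv)\bigr)^{\Tate} \;\simeq\; \Shv_\cN(N\bs G/N)\ot_k k[u,u\inv],
\]
a universal-monodromic, family-over-$\Hv$ enhancement of the endoscopic Koszul duality of Lusztig--Yun and Bezrukavnikov--Yun, and then pass to $\Gamma$-fixed points on both sides.

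Second, I would verify fiber-wise equivalence at each $\alpha\in \Hv$. On the spectral side this is precisely the theorem cited just before Conjecture~\ref{families soergel}:
\[
\IndCoh(\bL^\eta_\alpha)^{\Tate}\;\simeq\;\Shv\bigl((\Bv(\alpha)\bs\Gv(\alpha)/\Bv(\alpha))^\Gamma\bigr)\ot_k k[u,u\inv],
\]
obtained by combining the equivariant localization identification of $\bL^\eta_\alpha$ as the unipotent loop space of the ABV stack with Koszul duality for loop spaces. On the automorphic side, the $\alpha$-formal completion is, by the discussion of Section~\ref{sheaves to reps}, $\alpha$-monodromic sheaves $\Shv_\alpha((B\bs G/B)^\Gamma)$, which by Soergel's conjecture (Conjecture~\ref{soergel conj}) matches $\Shv((\Bv(\alpha)\bs\Gv(\alpha)/\Bv(\alpha))^\Gamma)$ after Koszul duality, hence agrees with the spectral fiber after $k[u,u\inv]$-base change. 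Compatibility of these identifications with the Hecke-category actions used to construct the families functor ensures the resulting fiber-wise functor is the one produced in the first step.

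Finally, I would upgrade the fiber-wise equivalence to a global one by a fracture theorem in the spirit of the $GL_n$ argument of \cite{BCHN2}: show that $\QC^!(\bL^\eta)^{\Tate}$ and $\Shv_\cN((N\bs G/N)^\Gamma)\ot k[u,u\inv]$ are each recoverable from their formal completions at all $\alpha \in \Hv$, via sufficiently strong finiteness (the Soergel-bimodule generators are projective of finite rank over $\cO(\Hv)$), plus compatible gluing data around the strata of $\Hv$ where centralizers jump. The main obstacles will be: (a) the input of Soergel's conjecture itself, which is not known in full generality and must be taken as a hypothesis or established afresh in the families setting; (b) the families monoidal Koszul duality enhancing \cite{BY} and \cite{lusztig yun} uniformly over $\Hv$, which is a substantive statement because the centralizers $\Gv(\alpha)$ change discontinuously; and (c) commuting the $\Gamma$-fixed-points construction with the Tate construction and with Koszul duality, which should hold by finiteness of $\Gamma$ but requires care in the dg/renormalized setting of Section~\ref{sec ren}.
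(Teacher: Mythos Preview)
The statement is a \emph{conjecture} in the paper, not a theorem; there is no proof to compare against. What the paper does instead is give a different \emph{reduction}: via the Proposition at the end of Section~\ref{sec twistor automorphic} (using GKM equivariant localization on the automorphic side), the Families Soergel Conjecture is identified with the periodic cyclic specialization of the twistor geometric Langlands conjecture (Conjecture~\ref{twistor conjecture}). That is, the paper's strategy is to eventually deduce the statement from a full coherent equivalence $\Shv_\cN(\Bun_{G,\theta}(\twistor,\infty))\simeq \QC^!(\bL^\eta)$ by applying $(-)^{\Tate}$ and observing that on the automorphic side only the trivial-bundle locus $(N\bs G/N)^\Gamma$ survives.

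Your plan is a genuinely different route: you propose to assemble the families statement \emph{from} the fiber-wise Soergel conjecture (Conjecture~\ref{soergel conj}) together with a families monoidal Koszul duality and a fracture/gluing argument over $\Hv$. This is a sensible reduction, but note two points. First, it is not more elementary than the paper's reduction---you are trading one open conjecture (twistor geometric Langlands) for another (Soergel's conjecture at every $\alpha$, plus the families enhancement of \cite{BY,lusztig yun}), and the latter is not obviously easier; indeed your obstacle (b), a monoidal Koszul duality uniform in $\alpha$ as $\Gv(\alpha)$ jumps, is essentially a form of the problem you are trying to solve. Second, your fiber-wise step invokes Conjecture~\ref{soergel conj} on the automorphic side to match the spectral theorem preceding Conjecture~\ref{families soergel}; but the paper's point of view is precisely the opposite direction---that the coherent/twistor picture should \emph{imply} Soergel's conjecture upon specialization, not require it as input. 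So your plan, while coherent as a reduction, runs somewhat counter to the paper's intended logical flow.
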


\medskip

\subsection{Twistor Geometric Langlands: Automorphic side}\label{sec twistor automorphic}
The stack $(B\backslash G/B)^{\Gamma}$ appearing on the automorphic side of Conjecture~\ref{soergel conj} (or its $N$-version appearing in Conjecture~\ref{families soergel}) has a natural geometric interpretation in terms of the twistor line, discovered in~\cite{loops and parameters}.

\begin{defn} The topological stack of real $G$-bundles on $\twistor$ 
$$\Bun_{G,\theta}(\twistor;\infty):=\Bun_G(\Pp^1;0,\infty)^\Gamma$$  is the fixed point stack of $\Gamma$ acting on (the Betti realization of) the stack of $G$-bundles on $\Pp^1$ equipped with decorated flags (i.e., $N$-reductions) at $0,\infty$ by composition of the antipodal map and the involution $\theta$ on $G$. 
\end{defn}

\medskip
The Betti form of the geometric Langlands correspondence~\cite{betti} seeks to describe the categories of nilpotent sheaves on stacks of $G$-bundles $\Shv_\cN(\Bun_G)$ in terms of algebraic geometry of stacks of local systems. In the case of parabolic bundles on the twistor line, we have already identified the corresponding space of local systems with $\bL^\eta$, so that we have the following:

\medskip
\begin{conj}[Twistor Geometric Langlands~\cite{loops and parameters, betti}]\label{twistor conjecture}
There is an equivalence $$\Shv_\cN(\Bun_{G,\theta}(\twistor,\infty))\simeq \QC^!(\bL^\eta)$$ intertwining natural affine Hecke symmetries.
\end{conj}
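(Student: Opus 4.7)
The plan is to derive the twistor geometric Langlands equivalence as a $\Gamma$-equivariant form of Betti tamely ramified geometric Langlands for $(\Pp^1;0,\infty)$, by lifting both sides to stacks on which $\Gamma = \Gal(\C/\R)$ acts compatibly.

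First I would take as input the (conjectural) Betti tame geometric Langlands equivalence
$$\Shv_\cN(\Bun_G(\Pp^1;0,\infty)) \simeq \QC^!(\bL_G(\Pp^1;0,\infty)),$$
intertwining the affine Hecke categories acting at $0$ and at $\infty$. Here the spectral side is identified with the loop space $\cL(\Bv\bs\Gv/\Bv)$ of the group version of the Steinberg stack, and the Hecke symmetries are convolution with coherent sheaves on $\Bv\bs\Gv/\Bv$ at each marked point.

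Next I would apply $\Gamma$-descent, with $\Gamma$ acting by the antipodal map $z \mapsto -1/\bar{z}$ on $\Pp^1_\C$ (which swaps $0$ and $\infty$) combined with the involution $\theta$ (resp.\ $\eta$) on the automorphic (resp.\ spectral) side. By construction the automorphic side becomes $\Shv_\cN(\Bun_{G,\theta}(\twistor;\infty))$, while by description (2) of $\bL^\eta$ in Section~\ref{sec twistor spectral} the spectral side becomes $\QC^!(\cL(\Bv\bs\Gv/\Bv)^\Gamma) \simeq \QC^!(\bL^\eta)$. The two affine Hecke actions at $0$ and at $\infty$ are exchanged by $\Gamma$, so they combine under descent into the single residual affine Hecke symmetry at $\infty \in \twistor$ demanded by the conjecture.

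The main obstacle is the first step: a precise Betti tame geometric Langlands equivalence for $\Pp^1$ with two tame points is still open in general. A natural route is to combine Bezrukavnikov's tamely ramified local geometric Langlands equivalence~\cite{roma ICM,roma hecke} (which identifies the affine Hecke category at a single point with coherent sheaves on the Steinberg) with a global-to-local gluing specific to $\Pp^1$. A more subtle secondary obstacle is lifting the resulting equivalence to the $\Gamma$-equivariant setting: one must promote the geometric Langlands kernel to a $\Gamma$-equivariant object and handle the outer action through the Galois-equivariant Satake equivalence used to define the $L$-group. As a consistency check and a block-by-block alternative, the Tate (periodic cyclic) deformation of the twistor conjecture should specialize at each monodromy $\alpha \in \Hv$ to the families Soergel conjecture~\ref{families soergel}: completing $\bL^\eta$ at $\alpha$ and invoking Theorem~\ref{formal is unipotent} together with equivariant localization identifies Tate cyclic sheaves there with constructible sheaves on the corresponding ABV parameter space. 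One can therefore attempt to first establish the Tate form of the equivalence block-by-block via Soergel-type Koszul duality, and then reassemble via a fracture argument analogous to the nonarchimedean proof of Theorem~\ref{BCHN theorem}; lifting from the Tate form to the full coherent equivalence would then remain as a further step.
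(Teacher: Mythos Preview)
The statement you are addressing is a \emph{conjecture} in the paper, not a theorem: it is explicitly labeled \texttt{conj} and the surrounding discussion treats it as an open formulation of tamely ramified Betti geometric Langlands for the twistor line. The paper offers no proof, and none is expected---this is a proposed archimedean analogue of Fargues' conjecture and of the global Betti geometric Langlands program, both of which remain open.

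Your proposal is therefore not a proof but a reduction strategy, and you are candid about this: you take as input the (conjectural) Betti tame geometric Langlands equivalence for $(\Pp^1;0,\infty)$ and then pass to $\Gamma$-fixed points. This is a natural heuristic and aligns with how the paper sets things up---$\Bun_{G,\theta}(\twistor;\infty)$ is \emph{defined} as $\Bun_G(\Pp^1;0,\infty)^\Gamma$, and $\bL^\eta$ is identified with $\cL(\Bv\bs\Gv/\Bv)^\Gamma$---so the $\Gamma$-descent picture is exactly the intended geometry. But you should be clear that this reduces one conjecture to another at least as hard: the Betti tame statement for $(\Pp^1;0,\infty)$ is itself open, and promoting such an equivalence to a $\Gamma$-equivariant one (with the outer involution $\theta$ on one side and $\eta$ on the other, matched through the $L$-group) is an additional nontrivial layer. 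Your suggestion to attack the Tate form block-by-block via Soergel-type Koszul duality and then reassemble is reasonable as a program, and indeed the paper's Proposition immediately following the conjecture establishes precisely that the Tate form of Conjecture~\ref{twistor conjecture} is equivalent to the families Soergel conjecture~\ref{families soergel}; but lifting from the Tate form to the full coherent statement is not something the paper claims to know how to do, and neither should you.

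In short: there is no gap to name because there is no proof to compare against. What you have written is a coherent outline of how the conjecture sits in relation to neighboring open problems, which matches the paper's own framing.
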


\begin{remark}
There are two variants (``standard" and ``renormalized") of the large categories of sheaves on stacks, corresponding to the two Koszul dual variants of sheaves on $BG$ (modules for chains on $G$ vs. modules for cochains on $BG$). These correspond to imposing or not imposing nilpotent singular support of coherent sheaves on the spectral side.
\end{remark}

The open locus in $\Bun_{G,\theta}(\twistor;\infty)$ where the underlying $G$-bundle on $\Pp^1$ is trivial is identified with the fixed point stack $(N\backslash G/N)^{\Gamma}$ of the Galois group on the stack $N\backslash G/N$, which parametrizes pairs of decorated flags on the trivial bundle. Thus the category $\Shv_\cN((N\backslash G/N)^{\Gamma})$ and its variants appearing in Soergel's conjecture is identified with sheaves on an open locus in $\Bun_{G,\theta}(\twistor;\infty)$, and thus fits in a semi-orthogonal decomposition of the category. The circle action respects this subcategory and is trivialized on it; indeed we have the following description of the periodic cyclic form of the twistor automorphic category, which follows from the equivariant localization theorem of~\cite{GKM} applied to Hom spaces of $S^1$-equivariant sheaves on $\Bun_{G,\theta}(\twistor,\infty)$:

\medskip
\begin{prop}~\cite{loops and parameters}
The periodic cyclic category of automorphic sheaves is identified as
$$\Shv_\cN(\Bun_{G,\theta}(\twistor,\infty))^{\Tate}\simeq \Shv_\cN((N\backslash G/N)^{\Gamma})\ot_k k[u,u\inv].$$
Thus the families Soergel conjecture, Conjecture~\ref{families soergel}, is identified with the periodic cyclic form of twistor geometric Langlands, Conjecture~\ref{twistor conjecture}.
\end{prop}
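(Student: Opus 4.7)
The plan is to apply equivariant localization for $S^1$-actions in order to concentrate the Tate construction on the open trivial-bundle stratum. First, the circle action must be set up explicitly: the rotation $U(1)\subset \bC^\times$, $z\mapsto \zeta z$, on $\Pp^1_{\bC}$ commutes with the antipodal involution $z\mapsto -1/\bar z$ defining $\twistor$, and fixes the two poles $0,\infty$. It therefore descends to an action on $\twistor$ fixing $\infty$ and lifts functorially to a circle action on $\Bun_{G,\theta}(\twistor;\infty)$ preserving the $N$-reductions at $0$ and $\infty$. Stratifying by the Harder--Narasimhan type $\lambda$ of the underlying bundle on $\Pp^1$, one has the open stratum ($\lambda=0$) identified with $(N\backslash G/N)^{\Gamma}$, together with locally closed strata $Z_\lambda$ for each nontrivial HN cocharacter $\lambda$, on which automorphism groups pick up the centralizer Levi $L_\lambda$.

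The core of the argument then invokes the equivariant localization theorem of Goresky--Kottwitz--MacPherson~\cite{GKM}, interpreted for $S^1$-equivariant sheaves on the Artin stack $\Bun_{G,\theta}(\twistor;\infty)$: after inverting the equivariant parameter $u\in H^*(BS^1;k)$, the Tate construction of Hom spaces localizes to the $S^1$-fixed locus, and more generally vanishes on any locally closed substack whose $S^1$-stabilizers act through non-trivial characters. On a non-open HN stratum $Z_\lambda$, the rotation $\zeta\in U(1)$ acts on a bundle of HN-type $\lambda$ through the cocharacter $\lambda\colon \Gm\to T$, producing a non-trivial inner character on the Levi $L_\lambda$; combined with the nilpotent singular support condition, this forces Tate-acyclicity on each such stratum. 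On the open stratum, by contrast, the rotation can be absorbed into a change of trivialization of the trivial bundle, so the $S^1$-equivariant structure is canonically trivial and the Tate construction reduces to base change $\otimes_k k[u,u^{-1}]$.

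The main obstacle will be the dévissage step: although equivariant localization is classical for manifolds and ordinary schemes, transferring it to the setting of nilpotent-microlocally supported sheaves on the infinite-type Artin stack $\Bun_{G,\theta}(\twistor;\infty)$ requires care. One must verify that the stratum-by-stratum vanishing propagates from the bounded to the ind-completed category, and that the semi-orthogonal decomposition by HN-type is compatible with the Tate construction. Granting these technical points, the identification $\Shv_\cN(\Bun_{G,\theta}(\twistor,\infty))^{\Tate} \simeq \Shv_\cN((N\backslash G/N)^{\Gamma})\otimes_k k[u,u^{-1}]$ follows, and the comparison identifying Conjecture~\ref{families soergel} with the periodic cyclic form of Conjecture~\ref{twistor conjecture} is then immediate.
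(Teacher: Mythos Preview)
Your proposal is correct and follows essentially the same approach as the paper, which does not give a detailed proof but simply states that the result ``follows from the equivariant localization theorem of~\cite{GKM} applied to Hom spaces of $S^1$-equivariant sheaves on $\Bun_{G,\theta}(\twistor,\infty)$.'' Your expansion---making explicit the rotation action, the Harder--Narasimhan stratification, the identification of the open (trivial-bundle) stratum with the $S^1$-fixed locus, and the Tate-acyclicity of the unstable strata via the nontrivial cocharacter action---is exactly the content behind that citation, and your honest flagging of the d\'evissage technicalities is appropriate given that the paper itself leaves these implicit.
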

\medskip

In other words, the $u$-deformation of the twistor geometric Langlands conjecture (the coherent local Langlands correspondence over $\R$) picks out only the subcategory associated to representations of pure inner forms of $G$, and on this subcategory produces the constructible form of the categorical local Langlands correspondence.

\begin{remark}
For non-semisimple groups the trivial bundle (or $S^1$-fixed) locus -- whose sheaves participate in the cyclic deformation to representations of pure inner forms -- can be strictly smaller than the semistable locus, whose sheaves index representations of groups associated to basic isocrystals. 
\end{remark}

\appendix

\section{Foundations}\label{koszul}


In this section we establish technical foundations for the representation theoretic applications discussed in previous sections.   We will begin with a general set-up: let $X$ be a smooth Artin stack with affine diagonal over a field $k$ of characteristic zero. 

\subsection{Circle and $B\bG_a$-actions}\label{circle actions sec}

We make a digression on categorical $G$-actions for the non-affine groups stacks $G = S^1, B\bG_a, B\bG_a \rtimes \bG_m$, since they may be alien to a reader accustomed to working with actions by affine algebraic groups.  Extensive discussion on this subject may be found in Section 6 of \cite{toly}; we also refer the reader to Section 3.1 of \cite{koszul} for the case $G = B\bG_a$.  We wish to draw attention to the following two phenomena which do not present when $G$ is an affine algebraic group.

\medskip

\noindent \textbf{Lack of (de)-equivariantization correspondence:} Since $BG = BS^1$ is not 1-affine, not all objects in a $\QCoh(G)$-module category $\cat{C}$ can be generated by invariant ones, i.e. the natural functor
$$\begin{tikzcd} \cat{C}^{\QCoh(G)} \otimes_{\QCoh(BS^1)} \cat{Vect} \arrow[r, hook] & \cat{C}\end{tikzcd}$$
is fully faithful (Proposition \ref{equivariant ff}) but no longer an equivalence.  On the other hand, when $G = B\bG_a$ (and simialrly for $G = B\bG_a \rtimes \bG_m$), by Theorem 2.5.7 of \cite{1affine} we have that $BG = B^2\bG_a$ is 1-affine, thus the functor is an equivalence:
$$\begin{tikzcd} \cat{C}^{B\bG_a} \otimes_{\QCoh(B^2\bG_a)} \cat{Vect} \arrow[r, "\simeq"]& \cat{C}.\end{tikzcd}$$
This is discussed in Section \ref{deeq sec}.

\medskip

\noindent \textbf{Renormalized invariants:} For $G = S^1, B\bG_a$, and $B\bG_a \rtimes \bG_m$, the stacks $BG$ are not compactly generated by perfect objects, i.e. $\QCoh(BG) \not\simeq \Ind(\Perf(BG))$, and the operations of ind-completion and $G$-invariants do not commute.  We will explain why we prefer to take the $G$-invariants of small categories and then ind-complete (which we call the \emph{category of compactly renormalized (weak) $G$-invariants}), rather than the other way around in Section \ref{S1 ren sec}.

\medskip

\subsubsection{Linearization and affinization of circle actions}\label{deeq sec}   Let $G$ be an affine algebraic group acting on a scheme $X$.  There are two module categories for different monoidal categories one can attach to this set-up.
\begin{enumerate}
\item The category $\QCoh(G)$ is monoidal under pushforward along group multiplication $m: G \times G \rightarrow G$, and acts on $\QCoh(X)$ via pushforward along the action map $a: G \times X \rightarrow X$.  Alternatively (and more naturally), we may view $\QCoh(G)$ as a comonoidal category under pullback, and consider comodule categories.
\item The category $\QCoh(BG)$ is monoidal under tensor product, and acts on $\QCoh(X/G)$ via pullback and tensoring.
\end{enumerate}
Furthermore, these two categories are related via the invariants and coinvariants constructions:
$$\QCoh(X)^{\QCoh(G)} \simeq \QCoh(X/G), \;\;\;\;\;\;\;\;\;\; \QCoh(X/G) \otimes_{\QCoh(BG)} \cat{Vect}_k \simeq \QCoh(X).$$
These operations are sometimes referred to as \emph{equivariantization} and \emph{de-\hspace{0ex}equivariantization} respectively. 

\medskip

In fact, these functors can be made sense of in a general context, where they arise as the adjoint invariants and reconstruction functors from Section 10.2 of \cite{1affine}
$$\begin{tikzcd}[column sep=30ex]
 \cat{Mod}(\QCoh(BG)) \arrow[r ,shift left=0.9ex, "{\cat{D} \mapsto \cat{D} \otimes_{\cat{Rep}(G)} \cat{Vect}_k}"] & \cat{Comod}(\QCoh(G)) \arrow[l, shift left=0.9ex, "\cat{C} \mapsto \cat{C}^{\QCoh(G)}"]
\end{tikzcd}$$
One can pass from the less familiar comodules to modules by the canonoical equivalence $\cat{Comod}(\QCoh(G)) \simeq \cat{Mod}(\QCoh(G)^\vee)$; if $\QCoh(G)$ is self-dual, then in addition we have $\cat{Mod}(\QCoh(G)^\vee) \simeq \cat{Mod}(\QCoh(G))$.\footnote{In many situations, $\QCoh(G)$ is self-dual, e.g. by D.1.2 of \cite{1affine} when $\QCoh(G)$ is rigid for any monoidal structure, which is satisfied by any perfect stack by Proposition 3.4.2 of \cite{GR}).}  We are interested in conditions under which these adjoints are in fact equivalences.  Following the discussion in \emph{loc. cit.}, this occurs when $BG$ is \emph{1-affine}, and by Proposition 10.4.4 of \cite{1affine}, $BG$ is 1-affine when $G$ is an affine algebraic group.  This is by no means the only case; by Theorem 2.5.7 of \emph{op. cit.} we see that $BG$ is 1-affine (and self-dual) when $G = B\bG_a, B\bG_a \rtimes \bG_m$.

\medskip

As mentioned, $B^2\bG_a$ is 1-affine, and we will now see that $BS^1$ is not.  We are interested in the monoidal category $\QCoh(S^1)$, where the monoidal structure is given by pushforward along multiplication $m: S^1 \times S^1 \rightarrow S^1$ and the unit is the skyscraper at the identity.  We recall the following standard calculation.
\begin{prop}
The category $\QCoh(S^1)$ is equivalent to the (derived) category of chain complex valued local systems on $S^1$.  Under Cartier duality we have monoidal equivalences and identifications
$$\begin{tikzcd}
& \cat{Vect} & \\
\QCoh(S^1) \arrow[r, "\simeq"] \arrow[ur, "p^*"] & \Mod(k\bZ) \arrow[r, "\simeq"] \arrow[u] & \arrow[ul, "q_*"'] \QCoh(\bG_m)
\end{tikzcd}$$
where $p: \Spec k \rightarrow B\bZ = S^1$ and $q: \bG_m \rightarrow \Spec k$, and where we take for monoidal structure the tensor product on $\QCoh(S^1)$ and $\QCoh(\bG_m)$ and the convolution product on $\Mod(k\bZ)$.\footnote{This is not important for us, but note that $\QCoh(S^1)^{\omega} \supsetneq \Perf(S^1)$, so $S^1$ is not perfect.}
\end{prop}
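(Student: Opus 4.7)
The plan is to recognize the three categories as incarnations of the same data, then verify compatibility of the monoidal structures and the fiber functors. I would organize the proof around the presentation $S^1 = B\bZ$ and Cartier duality $k[\bZ] = \cO(\bG_m)$.

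First, since $S^1 = B\bZ$ as a prestack, descent gives $\QCoh(S^1) \simeq \Mod(k\bZ)$: a sheaf on $B\bZ$ is precisely a $k$-complex equipped with a $\bZ$-action (equivalently, a single automorphism), which is the same as a chain-complex-valued local system on $S^1$. The pullback $p^*$ along $p: \Spec k \to B\bZ$ is, under this identification, the forgetful functor $\Mod(k\bZ) \to \Vect$, matching the assignment of a local system to its stalk at the basepoint.

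Second, Cartier duality yields an algebra isomorphism $k[\bZ] \simeq k[t,t^{-1}] = \cO(\bG_m)$, and since $\bG_m$ is affine this induces an equivalence $\Mod(k\bZ) \simeq \QCoh(\bG_m)$. Under it, $q_* = \Gamma(\bG_m, -)$ on the right corresponds to the forgetful functor to $\Vect$ on the left, since both send a module to its underlying vector space; this completes the identification of the triangle to $\Vect$.

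Third, to show the equivalences are monoidal, I would compare structures on each side. The monoidal structure on $\QCoh(B\bZ)$ by pushforward along $m: B\bZ \times B\bZ \to B\bZ$ is convolution $M \star N = M \otimes_k N$ with the additive $\bZ$-action, having unit $p_* k = k[\bZ]$. Under Cartier duality this corresponds to the pointwise tensor product on $\QCoh(\bG_m)$ with unit $\cO_{\bG_m}$, precisely because the comultiplication on the Hopf algebra $k[\bZ]$ is dual to the multiplication on $\cO(\bG_m) = k[t,t^{-1}]$ (the group law of $\bG_m$) -- this is the defining content of Cartier duality for a discrete abelian group.

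The main point of care, rather than an obstacle, is lifting this classical duality to a symmetric monoidal equivalence of presentable stable $\infty$-categories, tracking the Hopf algebra structures coherently in the derived setting. In characteristic zero this is standard, simplified by the discreteness of $\bZ$ and the absence of higher homotopy complications; one may phrase it as a derived Cartier duality between cocommutative and commutative dg Hopf algebras, both of which are formal in the case at hand.
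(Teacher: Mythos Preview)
The paper does not prove this proposition; it is introduced as ``the following standard calculation'' and simply stated. Your overall approach---identifying the categories via $S^1 = B\bZ$ and the tautology $k\bZ = k[t,t^{-1}] = \cO(\bG_m)$, matching the fiber functors, and invoking Cartier duality for the monoidal compatibility---is the natural argument and is correct in outline.

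There is one genuine slip. In your third paragraph you write that convolution on $\QCoh(B\bZ)$ is ``$M \star N = M \otimes_k N$ with the additive $\bZ$-action, having unit $p_* k = k[\bZ]$.'' But $M \otimes_k N$ with diagonal $\bZ$-action is the \emph{pointwise} tensor product on $\QCoh(B\bZ)$, whose unit is the trivial representation $k$; your formula and your stated unit are inconsistent. The convolution along $m = B(+) : B(\bZ\times\bZ) \to B\bZ$ (using the left adjoint $m_!$) is rather $M \star N \simeq M \otimes_{k\bZ} N$: one computes $m_!(M\boxtimes N)=k[\bZ]\otimes_{k[\bZ]\otimes k[\bZ]}(M\otimes_k N)\simeq M\otimes_{k\bZ} N$, with unit the regular module $k[\bZ]$ (this is $p_! k$, not $p_* k$). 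Under the identification $k\bZ = \cO(\bG_m)$ this is tautologically the pointwise tensor on $\QCoh(\bG_m)$, which is the conclusion you want. Correspondingly, your Hopf-algebra justification is inverted: it is the \emph{algebra} multiplication on $k\bZ = \cO(\bG_m)$ (convolution of functions on $\bZ$) that governs $\otimes_{k\bZ} = \otimes_{\cO(\bG_m)}$; the comultiplication $n \mapsto n \otimes n$ encodes the diagonal tensor of $\bZ$-representations, which is the other monoidal structure. Once this is straightened out, your argument goes through.
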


\begin{rmk}\label{categorical S1 def}
To add to the confusion, there is another possible notion of what one might mean by an $S^1$-action on a category.  Namely, any topological group $G$ may be made internal to $\infty$-categories, and thus one may formulate in entirely abstract terms what it means for $G$ to act on an $\infty$-category.\footnote{E.g. we may define a category $\cat{B}G$ consisting of one object whose morphisms are $G$ and define a $G$-action on $\cat{C}$ to be a functor $\cat{B}G \rightarrow \cat{Cat}_\infty$ sending the unique object to $\cat{C}$ (the ``straightened'' version), or we may define a $G$-module category to be a coCartesian fibration $\cat{C}^\otimes \rightarrow \cat{B}G$ (the ``unstraightened'' version).}  In the $k$-linear setting these notions are equivalent.  In the case where $G = S^1$, such an action is given by a map $\bZ \rightarrow HH^\bullet(\cat{C})$ to the Hochschild cohomology complex, which is equivalent to a $\cO(\bG_m)$-linear structure on $\cat{C}$.  We refer the reader to Section 6.1 of \cite{toly} for details.
\end{rmk}

\medskip

We now make calculations on the equivariantized side, beginning with the 1-affine $BG = B^2\bG_a$.  We choose $u \in \cO(B^2\bG_a)$, thus identifying once and for all $k[u] \simeq \cO(B^2\bG_a)$, and let $\bB = \{0\} \times_{\A^1} \{0\}$.
\begin{prop}\label{BGa calc}
The stack $B^2\bG_a$ is 1-affine and self-dual, thus the equivariantization and de-equivariantization functors are equivalences.  Furthermore, letting $p: \Spec k \rightarrow B^2\bG_a$, we have monoidal identifications
$$\begin{tikzcd}[column sep=huge]
& \cat{Vect} & \\
\QCoh(B^2\bG_a) \arrow[ur, "p^*"] \arrow[r, "\simeq"] & \QCoh(\bB) \arrow[u] \arrow[r, "{- \tens{\cO_{\bB}} k}"', "\simeq"] &  \Mod_{u\mathrm{-tors}}(k[u]) \arrow[ul, "{ R\Hom_{k[u]}(k, -)}"'] 
\end{tikzcd}$$
for the tensor monoidal structure on $\QCoh(B^2\bG_a)$, the convolution structure on $\QCoh(\bB)$, and the $!$-tensor product on $\Mod_{u\mathrm{-tors}}(k[u])$, the full subcategory of locally $u$-torsion modules.  In particular, $\QCoh(B^2\bG_a)^\omega  \subsetneq \Perf(B^2\bG_a)$, and $B^2\bG_a$ is not perfect.\footnote{E.g. the module $k[u,u^{-1}]/uk[u] \in \Mod_u(\Sym V^*[-2])$ is not compact but is perfect.}  Similar statements hold for the group $G = B\bG_a \rtimes \bG_m$.
\end{prop}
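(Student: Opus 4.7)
The 1-affineness and self-duality of $B^2\bG_a$ are immediate from Theorem 2.5.7 of \cite{1affine}; this gives the first sentence, and the equivariantization/de-equivariantization functors being equivalences then follows from the general framework recalled above. For the monoidal identifications, the plan is to pass through cochain algebras on both sides. By standard bar computations in characteristic zero, $\cO(B^2\bG_a) \simeq k[u]$ with $|u|=2$, while by construction $\cO(\bB) = k\lotimes_{k[x]} k \simeq k[\eta]/(\eta^2)$ with $|\eta|=-1$. Since $B^2\bG_a$ is coaffine and $\bB$ is affine, their categories of quasi-coherent sheaves are recovered as modules for these respective cochain algebras, and the equivalence $\QCoh(B^2\bG_a) \simeq \QCoh(\bB)$ reduces to the classical Koszul duality between the symmetric algebra in cohomological degree $2$ and the exterior algebra in cohomological degree $-1$.

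Monoidality is checked as follows: $\bB \simeq \Omega\bG_a$ inherits a derived abelian group structure from the group structure on $\bG_a$, and Koszul duality intertwines the resulting convolution on $\QCoh(\bB)$ with the tensor monoidal structure on $\QCoh(B^2\bG_a)$. The unit $\cO_{\{0\}}$ corresponds under the first equivalence to the augmentation module $k$, and hence to $p^* k$, matching the asserted identification of $p^*$. For the further equivalence $\QCoh(\bB) \simeq \Mod_{u\mathrm{-tors}}(k[u])$, the functor $M\mapsto M\lotimes_{\cO_\bB} k$ is manifestly colimit-preserving, and its essential image lies in $u$-torsion modules (since the generator $\cO_\bB$ maps to $k$, on which $u$ acts by zero, and $u$-torsion is closed under colimits). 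The inverse, given by $\RHom_{k[u]}(k,-)$, is fully faithful on this subcategory by standard Matlis/Koszul duality arguments, and is precisely the fiber functor to $\Vect$, compatible with $p^*$.

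To witness $\QCoh(B^2\bG_a)^\omega \subsetneq \Perf(B^2\bG_a)$ I will exhibit the module $k[u,u\inv]/uk[u]$ as a $u$-torsion module that is dualizable (hence perfect) but fails to be compact: $\Hom$ out of it does not commute with filtered colimits in the $u$-torsion subcategory, since every compact $u$-torsion object is bounded in an appropriate sense while this object realizes a Tate-type limit. Finally, the $B\bG_a\rtimes\bG_m$ case runs in parallel by carrying out every construction $\bG_m$-equivariantly, which amounts to adding a $\bZ$-grading whose weight matches the cohomological degree (as in Section~\ref{odd tangent}). The principal technical difficulty will be tracking monoidality and the torsion condition simultaneously through the Koszul equivalences; I expect this to reduce to bar/cobar compatibility, with the tensor structure on $\QCoh(B^2\bG_a)$ arising from the coalgebra structure on $k[u]$ that is Koszul dual to the Hopf structure on $k[\eta]/(\eta^2)$.
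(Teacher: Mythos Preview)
Your argument has a genuine gap at the step ``since $B^2\bG_a$ is coaffine \ldots\ their categories of quasi-coherent sheaves are recovered as modules for these respective cochain algebras.'' Coaffineness says that $B^2\bG_a$ is determined by its cochain algebra $\cO(B^2\bG_a)\simeq k[u]$ as a stack, but it does \emph{not} say that $\QCoh(B^2\bG_a)\simeq \Mod(k[u])$. Indeed this would contradict the very conclusion you are trying to prove: if $\QCoh(B^2\bG_a)$ were all of $\Mod(k[u])$, then since $k[u]$ is regular, compact objects would coincide with perfect ones, and the module $k[u,u^{-1}]/uk[u]$ you cite would be compact. The correct identification is $\QCoh(B^2\bG_a)\simeq \Mod_{u\text{-tors}}(k[u])$, and this cannot be read off from $\cO(B^2\bG_a)$ alone. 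Consequently your ``reduction to classical Koszul duality'' does not go through: Koszul duality between $k[u]$ and $k[\eta]/(\eta^2)$ is not an equivalence of full module categories, only between $\Mod(k[\eta]/\eta^2)$ and the $u$-torsion part of $\Mod(k[u])$.

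The paper's route fixes this by using descent one level down rather than global sections: $B^2\bG_a = B(B\bG_a)$, so $\QCoh(B^2\bG_a)$ is identified with comodules for the coalgebra $\cO(B\bG_a)\simeq C^\bullet(S^1;k)$, hence (by dualizing) with modules for $C_\bullet(S^1;k)\simeq k[\eta]/(\eta^2)=\cO_{\bB}$. This directly gives $\QCoh(B^2\bG_a)\simeq \QCoh(\bB)$ without any appeal to $\cO(B^2\bG_a)$, and only \emph{then} is Koszul duality invoked to pass to $\Mod_{u\text{-tors}}(k[u])$ (equivalently: $k$ is a compact generator of $\Mod(C_\bullet(S^1;k))$ with endomorphism algebra $C^\bullet(BS^1;k)\simeq k[u]$, and the image under $\RHom(k,-)$ consists exactly of locally $u$-nilpotent modules). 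Your later steps (monoidality via the group structure on $\bB\simeq\Omega\bG_a$, the $\bG_m$-equivariant variant) are reasonable once this identification is in place, but the crucial first equivalence needs the comodule description, not the false ``modules over $\cO(B^2\bG_a)$'' shortcut.
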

\begin{proof}
Let us, for the reader's sake, give a sense of how the calculation of $\QCoh(B^2\bG_a)$ goes (following \cite{loops and conns}).  First, by the usual descent arguments, one has that $\QCoh(B^2\bG_a)$ is equivalent to comodules for the coalgebra $\cO(B\bG_a) \simeq \Ext^\bullet_{\bG_a}(k, k) \simeq C^\bullet(S^1; k)$, with comultiplication given by pullback along group multiplication.  By dualizing, this is equivalent to modules for $C_\bullet(S^1; k)$.  By Koszul duality, this is equivalent to locally nilpotent modules for $C^\bullet(BS^1; k)$ (with multiplication by cup product), i.e. the augmentation module $k \in \Mod(C^\bullet(BS^1; k))$ is a compact generator, and $\End_{C^\bullet(BS^1; k)}(k, k) \simeq C_\bullet(S^1; k)$.  
\end{proof}

The case of $BG = BS^1$ is similar (see Lemma 3.10 of \cite{loops and conns}), and in fact there is no difference between the categories of quasi-coherent sheaves on $B^2\bG_a$ and $BS^1$.  In particular, $B^2\bG_a$ is the 1-affinization of $BS^1$.
\begin{prop}\label{BS1 1-affinization}
The affinization map $a: BS^1 \rightarrow B^2\bG_a$ induces a monoidal equivalence
$$a^*: \begin{tikzcd} \QCoh(B^2\bG_a)  \arrow[r, "\simeq"] & \QCoh(BS^1).\end{tikzcd}$$
\end{prop}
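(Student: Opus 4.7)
The plan is to compute $\QCoh(BS^1)$ via the same Koszul-dual description used for $\QCoh(B^2\bG_a)$ in Proposition~\ref{BGa calc}, and then to verify that pullback along $a$ intertwines the two identifications.

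First I would apply descent along the universal principal $S^1$-bundle $\pt \to BS^1$ to obtain
$$\QCoh(BS^1) \simeq \mathrm{coMod}_{\cO(S^1)}(\Vect),$$
where $\cO(S^1) \simeq C^\bullet(S^1;k)$ carries its natural cocommutative coalgebra structure coming from group multiplication on $S^1=B\bZ$. Formality in characteristic zero identifies this cdga with $k[\epsilon]/\epsilon^2$, $|\epsilon|=1$, and as such it agrees, as an augmented $E_\infty$-coalgebra, with $\cO(B\bG_a)=\Sym(k[-1])$. Cartier dualizing (exactly as in the proof of Proposition~\ref{BGa calc}) converts this to modules over $C_\bullet(S^1;k)$, and Koszul duality then identifies this last category with the full subcategory $\Mod_{u\text{-tors}}(k[u]) \subset \Mod(k[u])$ of locally $u$-torsion modules, where $u\in C^\bullet(BS^1;k)$ has $|u|=2$. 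This is exactly the description of $\QCoh(B^2\bG_a)$ obtained in Proposition~\ref{BGa calc}.

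Next I would observe that the affinization morphism $a: BS^1 \to B^2\bG_a$ is the delooping of the affinization $S^1 \to B\bG_a$, which in turn realizes the canonical identification of $\cO(S^1)$ with $\cO(B\bG_a)$. Hence under the descent descriptions above, $a^*$ corresponds to the identity functor between the two comodule categories, yielding the desired equivalence. Monoidality is automatic: the tensor product on either side is the pointwise tensor product of comodules over this common cocommutative coalgebra, and $a^*$ is symmetric monoidal as the pullback along a map of stacks.

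The main obstacle is the naturality of the coalgebra-level identification of $\cO(S^1)$ with $\cO(B\bG_a)$: one must check that formality can be chosen compatibly with the coproduct induced by group multiplication on $S^1$ and the standard coproduct on $\cO(B\bG_a)$. In characteristic zero this reduces to showing that the $E_\infty$-coalgebra structure on a cdga concentrated in cohomological degrees $0$ and $1$, with one-dimensional degree-$1$ part, is rigid, for which the relevant obstruction spaces vanish for degree reasons. Once this rigidity is in hand, the asserted monoidal equivalence follows by transport of structure, and in particular reproves (and refines) the remark following Proposition~\ref{BGa calc} that there is no difference between $\QCoh(BS^1)$ and $\QCoh(B^2\bG_a)$.
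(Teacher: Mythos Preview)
Your proposal is correct and follows essentially the same approach the paper indicates: the paper states the result without a written proof, deferring to Lemma~3.10 of \cite{loops and conns} and noting that the computation is ``similar'' to that of Proposition~\ref{BGa calc}, which is precisely the descent/Koszul-duality chain you carry out. One small simplification: the ``main obstacle'' you raise about compatibility of formality with the coproduct is more directly handled by observing that affinization preserves finite products, so $a:S^1\to B\bG_a$ is automatically a homomorphism of group prestacks and $a^*$ on functions is a map of bialgebras---no separate rigidity argument is needed.
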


\medskip

Passing through Cartier duality, we have equivalences of monoidal categories on the de-equivariantized side
$$\QCoh(B\bG_a) \simeq \QCoh(\wh{\bG}_a), \;\;\;\;\;\;\;\;\;\; \QCoh(S^1) = \QCoh(B\bZ) \simeq \QCoh(B\bG_m).$$
These are evidently different; letting $a: S^1 \rightarrow B\bG_a$ denote the affinization map and $\iota: \wh{\bG}_a \hookrightarrow \bG_m$, we have an identification of the adjoint pair $(a^*, a_*)$ with the adjoint pair $(\iota_*, \iota^!)$ under Cartier duality.  Since the corresponding categories on the equivariantized side are equivalent by Proposition \ref{BS1 1-affinization}, we may conclude that $BS^1$ is not 1-affine and that equivariantization and de-equivariantization are not inverse equivalences.
\begin{exmp}[$BS^1$ is not 1-affine]\label{S1 not 1affine}
Consider the regular $\QCoh(S^1)$-comodule category $\QCoh(S^1)$.  There is a tautological identification of its invariants with the augmentation $\QCoh(BS^1)$-module category $\cat{Vect}_k$, and by Proposition \ref{BS1 1-affinization} we have $\cat{Vect}_k \otimes_{\QCoh(BS^1)} \cat{Vect}_k \simeq \QCoh(B\bG_a)$, thus the functor
$$\QCoh(S^1)^{\QCoh(S^1)} \tens{\QCoh(BS^1)} \cat{Vect}_k \simeq \cat{Vect}_k \tens{\QCoh(BS^1)} \cat{Vect}_k \simeq \QCoh(B\bG_a) \hookrightarrow \QCoh(S^1)$$
is fully faithful and induced by pullback along the affinization map $a: S^1 \rightarrow B\bG_a$, with essential image the full subcategory of locally nilpotent $k\bZ$-modules.  In particular, this functor is not an equivalence, so $BS^1$ is not 1-affine.
\end{exmp}

\medskip

The full faithfulness of the functor in the above examples holds in greater generality; we call the essential image of the functor in the next proposition the full subcategory of \emph{$S^1$-invariant} (or $S^1$-equivariantizable) \emph{objects}.  We note that this proposition has a renormalized counterpart in Proposition \ref{equivariant ff ren}.
\begin{prop}\label{equivariant ff}
Let $\cat{C}$ be a $\QCoh(S^1)$-comodule category.  Then,
$$\cat{C}^{\QCoh(S^1)} \otimes_{\QCoh(BS^1)} \cat{Vect} \hookrightarrow \cat{C}$$
is fully faithful.
\end{prop}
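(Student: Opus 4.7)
The approach is to identify the image of the functor with the full subcategory $\cat{C}^{\text{nilp}} \subset \cat{C}$ of ``locally nilpotent'' (equivalently, $S^1$-affinizable) objects, and then to compute this subcategory via the 1-affineness of $B^2\bG_a$ from Proposition~\ref{BGa calc} together with the identification $\QCoh(BS^1) \simeq \QCoh(B^2\bG_a)$ from Proposition~\ref{BS1 1-affinization}.

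First, I would define $\cat{C}^{\text{nilp}} \subset \cat{C}$ as the full subcategory of objects whose $\QCoh(S^1)$-coaction factors through the fully faithful embedding $a^*: \QCoh(B\bG_a) \hookrightarrow \QCoh(S^1)$ of Example~\ref{S1 not 1affine}; equivalently, viewing $\cat{C}$ as $\QCoh(\bG_m)$-linear via Cartier duality, these are the objects set-theoretically supported at $1 \in \bG_m$. By construction, $\cat{C}^{\text{nilp}} \hookrightarrow \cat{C}$ is fully faithful, and $\cat{C}^{\text{nilp}}$ inherits the structure of a $\QCoh(B\bG_a)$-comodule category.

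Next, I claim an identification of $S^1$-invariants with $B\bG_a$-invariants of the nilpotent part,
$$\cat{C}^{\QCoh(S^1)} \simeq (\cat{C}^{\text{nilp}})^{\QCoh(B\bG_a)}.$$
This comes from the observation that any trivialization of a $\QCoh(S^1)$-coaction identifies the coaction with the trivial one, i.e., with tensoring by the convolution unit $\cO_e \in \QCoh(S^1)$, which lies in the image of $a^*$ (since $a^*$ is a monoidal pullback along a group homomorphism); hence an $S^1$-invariant object is automatically in $\cat{C}^{\text{nilp}}$, and conversely a $\QCoh(B\bG_a)$-trivialization pulls back to a $\QCoh(S^1)$-trivialization. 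Now applying the equivariantization/de-equivariantization equivalence from the 1-affineness of $B^2\bG_a$ to $\cat{C}^{\text{nilp}}$ as a $\QCoh(B\bG_a)$-comodule, we obtain
$$\cat{C}^{\text{nilp}} \simeq (\cat{C}^{\text{nilp}})^{\QCoh(B\bG_a)} \otimes_{\QCoh(B^2\bG_a)} \cat{Vect} \simeq \cat{C}^{\QCoh(S^1)} \otimes_{\QCoh(BS^1)} \cat{Vect},$$
and composing with the fully faithful $\cat{C}^{\text{nilp}} \hookrightarrow \cat{C}$ yields the desired embedding.

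The main obstacle is to verify the compatibility of the various structures across this chain of identifications: in particular, that the $\QCoh(BS^1)$-module structure on $\cat{C}^{\QCoh(S^1)}$ arising from the $\QCoh(S^1)$-comodule structure of $\cat{C}$ corresponds, under the equivalence of Proposition~\ref{BS1 1-affinization}, to the $\QCoh(B^2\bG_a)$-module structure on $(\cat{C}^{\text{nilp}})^{\QCoh(B\bG_a)}$ produced by equivariantization, and that the canonical functor of the proposition agrees with the composite fully faithful embedding assembled here. These coherences can be reduced to the universal comodule $\cat{C} = \QCoh(S^1)$, where the entire computation is carried out explicitly in Example~\ref{S1 not 1affine}.
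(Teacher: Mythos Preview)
Your approach is correct and gives a more structural picture of the image than the paper's proof, but it is less direct. The paper argues in two lines: the case $\cat{C} = \QCoh(S^1)$ is Example~\ref{S1 not 1affine}, and the general case follows by applying $\cat{C} \otimes_{\QCoh(S^1)} -$ to that fully faithful functor, using that this tensoring can be written as a limit (via right adjoints) and hence commutes with invariants and preserves full faithfulness. In effect the paper never names $\cat{C}^{\text{nilp}}$ or invokes 1-affineness of $B^2\bG_a$ for a general $\cat{C}$; it just transports the regular-representation computation.

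Your route, by contrast, isolates the image as the ``locally nilpotent'' subcategory and then appeals to 1-affineness of $B^2\bG_a$ to recover it from its $B\bG_a$-invariants. This buys you an explicit description of the essential image, which is useful information. However, the step you flag as the main obstacle---the identification $\cat{C}^{\QCoh(S^1)} \simeq (\cat{C}^{\text{nilp}})^{\QCoh(B\bG_a)}$ compatibly with the $\QCoh(BS^1)\simeq\QCoh(B^2\bG_a)$-module structures---is exactly where the work lies, and your proposed resolution (reduce to the universal comodule $\QCoh(S^1)$) is essentially the paper's argument run backwards. So the two proofs converge at the same checkpoint; the paper simply starts there.
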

\begin{proof}
The case of the right regular representation $\cat{C} = \QCoh(S^1)$ was established in Example \ref{S1 not 1affine}.  In general, tensor the above case with $\cat{C}$:
$$\cat{C} \otimes_{\QCoh(S^1)} \QCoh(S^1)^{S^1} \otimes_{\QCoh(BS^1)} \cat{Vect} \rightarrow \cat{C}.$$
The operation $\cat{C} \otimes_{\QCoh(S^1)} -$ can be expressed as a limit by passing to right adjoints, thus commutes with the $S^1$-invariants operation and preserves full faithfulness.
\end{proof}

\medskip

\subsubsection{Renormalization of circle and $B\bG_a$-actions}\label{S1 ren sec}

We now discuss the need to renormalize the invariants operation (as raised in \cite{toly indcoh}).  We summarize the discussion of this subsection as follows.
\begin{enumerate}
\item A $\QCoh(S^1)$-module category $\cat{C}$ equivariantizes to a cyclic deformation $\cat{C}^{S^1}$ over the 2-shifted formal affine line $\wh{\A}^1[2]$, whose special fiber is the full subcategory of $\cat{C}$ generated by $S^1$-invariant objects.
\item A $\QCoh(B\bG_a)$-module category $\cat{C}$ equivariantizes to a cyclic deformation $\cat{C}^{B\bG_a}$ over the 2-shifted formal affine line $\wh{\A}^1[2]$, whose special fiber recovers $\cat{C}$.
\item A $\QCoh(B\bG_a \rtimes \bG_m)$-module category $\cat{C}$ equivariantizes to a cyclic deformation $\cat{C}^{B\bG_a \rtimes \bG_m}$ over the graded formal affine line $\wh{\A}^1/\bG_m$, whose special fiber recovers $\cat{C}$.
\end{enumerate}
The generic fiber in all the cases above vanishes.  The goal of renormalization is to replace the formal affine line with the usual affine line, thus allowing for a meaningful generic fiber.

\medskip

We begin with some generalities.  Let $G$ be a group prestack acting on a prestack $X$.  Then, there is a tautological $\Perf(G)$-coaction on $\Perf(X)$, as well as a $\QCoh(G)$-coaction on $\QCoh(X)$.  Furthermore, we have
$$\Perf(X)^{\Perf(G)} \simeq \Perf(X/G) \subset \QCoh(X/G) \simeq \QCoh(X)^{\QCoh(G)}.$$
If $X/G$ is a perfect stack in the sense of \cite{BFN}, i.e. $\QCoh(X/G)$ is compactly generated by $\Perf(X/G)$, then taking $G$-invariants of small categories and then ind-completing is the same as taking $G$-invaraints of large categories.  By Corollary 3.22 of \emph{op. cit.} this occurs, for example, in characteristic zero when $X$ is quasi-projective and $G$ is an affine algebraic group.  However, our groups $G = S^1, B\bG_a$ take us outside of this setting, since $BG$ is not perfect as we have seen in Example \ref{S1 not 1affine}. 
Since the stacks $BS^1$ and $B^2\bG_a$ are not perfect, the following equivariantizations give different answers:
\begin{enumerate}
\item taking $\QCoh(G)$-invariants of large categories, as we have done in the previous section, and
\item taking $\Perf(G)$-invariants of small categories, and then ind-completing to obtain a large category.
\end{enumerate}

We now take the second approach and define the following renormalized equivariantization.
\begin{defn}\label{renormalized invariants}
Let $G$ be a group object in prestacks.  We will denote the $\QCoh(G)$-invariants of a comodule category by $\cat{C}^G$. Suppose that $\cat{C} = \Ind(\cat{C}_0)$ is a compactly generated category such that $\cat{C}_0$ has a $\Perf(G)$-coaction.  We define the \emph{category of compactly-renormalized $G$-invariants} to be the $\IndPerf(BG)$-module category
$$\cat{C}^{\omega G} := \Ind(\cat{C}_0^{\Perf(G)}).$$
There is a tautological functor $\cat{C}^{\omega G} \rightarrow \cat{C}^G$ induced by the functor $\cat{C}_0^{\Perf(G)} \rightarrow \cat{C}^{\QCoh(G)}$.
\end{defn}

\medskip

Then, the compactly renormalized equivariantization in our cases of interest $G = S^1, B\bG_a, B\bG_a \rtimes \bG_m$ take on the following forms. 
\begin{enumerate}
\item An $\IndPerf(S^1)$-module category $\cat{C}$ compactly equivariantizes to a cyclic deformation $\cat{C}^{\omega S^1}$ over the 2-shifted affine line $\A^1[2]$.
\item An $\IndPerf(B\bG_a) = \QCoh(B\bG_a)$-module category $\cat{C}$ compactly equivariantizes to a cyclic deformation $\cat{C}^{\omega B\bG_a}$ over the 2-shifted affine line $\A^1[2]$.
\item An $\IndPerf(B\bG_a \rtimes \bG_m) = \QCoh(B\bG_a \rtimes \bG_m)$-module category $\cat{C}$ compactly equivariantizes to a cyclic deformation $\cat{C}^{\omega B\bG_a \rtimes \bG_m}$ over the graded affine line $\A^1/\bG_m$.
\end{enumerate}

In light of this calculation we can make the following definition.
\begin{defn}\label{defn tate}
Let $\cat{C} = \Ind(\cat{C}_0)$ be a compactly generated category with a $\QCoh(S^1)$-action restricting to compact objects.  We define the \emph{Tate construction} to be the 2-periodic category
$$\cat{C}^{\Tate} := \cat{C}^{\omega S^1} \otimes_{\Mod(k[u])} \Mod(k[u, u^{-1}]).$$
One can make a similar definition for a category with a $\QCoh(B\bG_a)$-action.  If the category has a $\QCoh(B\bG_a \rtimes \bG_m)$-action, we may define the \emph{graded Tate construction}
$$\cat{C}^{\Tate \rtimes \bG_m} := \cat{C}^{\omega B\bG_a \rtimes \bG_m} \otimes_{\QCoh(\A^1/\bG_m)} \QCoh(\bG_m/\bG_m)$$
which is a (non-periodic) $k$-linear category.
\end{defn}

\medskip

In \cite{BCHN2} we show that the compactly renormalized invariant category is compatible with the usual invariants at the $u=0$ fiber.
\begin{prop}\label{equivariant ff ren}
Let $\cat{C}_0$ be a small stable $k$-linear $\infty$-category with a $\Perf(S^1)$-coaction, and let $\cat{C} = \Ind(\cat{C}_0)$.  The canonical functor
$$\begin{tikzcd} \cat{C}^{\omega S^1} \otimes_{k[u]} \cat{Vect}_k \arrow[r, "\simeq"] & \cat{C}^{S^1} \otimes_{k[u]} \cat{Vect}_k\end{tikzcd}$$
is an equivalence.  In particular, by Proposition \ref{equivariant ff} the functor
$$\begin{tikzcd}
\cat{C}^{\omega S^1} \otimes_{k[u]} \cat{Vect} \arrow[r, hook] & \cat{C}\end{tikzcd}$$
is fully faithful.  Similar statements hold for the compactly renormalized $B\bG_a$ and $B\bG_a \rtimes \bG_m$-invariants as well.  
\end{prop}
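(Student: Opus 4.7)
My plan is to exploit the $\Mod(k[u])$-linearity of the canonical comparison functor $\phi \colon \cat{C}^{\omega S^1} \to \cat{C}^{S^1}$ and reduce the statement to a universal computation at $u=0$. First, I would identify the $\Mod(k[u])$-module structures on both sides. By Propositions \ref{BGa calc} and \ref{BS1 1-affinization}, we have $\IndPerf(BS^1) \simeq \Mod(k[u])$ and $\QCoh(BS^1) \simeq \Mod_{u\mathrm{-tors}}(k[u])$. Consequently $\cat{C}^{\omega S^1} = \Ind(\cat{C}_0^{\Perf(S^1)})$ is tautologically a $\Mod(k[u])$-module, while $\cat{C}^{S^1}$ is a $\Mod_{u\mathrm{-tors}}(k[u])$-module, and hence a $\Mod(k[u])$-module via the fully faithful inclusion $\Mod_{u\mathrm{-tors}}(k[u]) \hookrightarrow \Mod(k[u])$. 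The comparison $\phi$, induced by ind-extending the canonical map $\cat{C}_0^{\Perf(S^1)} \to \cat{C}^{\QCoh(S^1)}$ coming from $\Perf(S^1) \hookrightarrow \QCoh(S^1)$, is manifestly $\Mod(k[u])$-linear.

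Next I would reduce to a universal case. Both functors $(-)^{\omega S^1}$ and $(-)^{S^1}$ behave well under tensor products of $\Perf(S^1)$-comodule categories: for the former this is tautological from the definition of renormalized invariants, while for the latter it holds after base change along $k[u] \to k$ since only the formal neighborhood of $u=0$ participates. Thus the general comparison $\phi$ is obtained from a universal comparison $\psi \colon \Ind(\Perf(S^1)^{\Perf(S^1)}) \to \QCoh(S^1)^{\QCoh(S^1)}$ (for the regular $\Perf(S^1)$-comodule $\Perf(S^1)$) by tensoring with $\cat{C}_0$ over $\Perf(S^1)$. Since $- \otimes_{k[u]} \cat{Vect}_k$ commutes with this tensor operation, it suffices to verify the claim for $\psi$.

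In the universal case, both sides may be computed explicitly using Proposition \ref{BGa calc}. The renormalized side is $\Ind(\Perf(S^1)^{\Perf(S^1)}) \simeq \Mod(k[u])$, and $\Mod(k[u]) \otimes_{k[u]} \cat{Vect}_k \simeq \cat{Vect}_k$. The $\QCoh$-invariant side is $\QCoh(S^1)^{\QCoh(S^1)} \simeq \Mod_{u\mathrm{-tors}}(k[u])$, whose compact generator $k$ descends to $k$ under $- \otimes_{k[u]} \cat{Vect}_k$; so this side is also $\cat{Vect}_k$. The universal comparison $\psi$ sends unit to unit and hence induces the identity of $\cat{Vect}_k$ after base change.

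The main obstacle is Step 2: the projection-formula-type statement for $(-)^{S^1}$ is not formal, because $BS^1$ is not $1$-affine and the equivariantization–de-equivariantization adjunction for $\QCoh(BS^1)$-actions fails (cf. Example \ref{S1 not 1affine}). One must argue directly that this failure is confined to the $u$-invertible locus of the family over $\Spec k[u]$, which is precisely what is killed by $- \otimes_{k[u]} \cat{Vect}_k$. The $B\bG_a$ and $B\bG_a \rtimes \bG_m$ analogues follow by the same argument and are in fact simpler, since $B^2\bG_a$ and its graded variant are $1$-affine.
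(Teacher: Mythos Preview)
The paper does not actually prove this proposition; immediately before the statement it says ``In \cite{BCHN2} we show that the compactly renormalized invariant category is compatible with the usual invariants at the $u=0$ fiber,'' and then states the result without argument. So there is no proof in the paper to compare your proposal against.

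As to the proposal itself: your strategy of reducing to the regular comodule and computing there is natural, and indeed the paper uses exactly this shape of argument to prove the \emph{non}-renormalized Proposition~\ref{equivariant ff}. But you correctly identify, and do not resolve, the essential difficulty. The claim that $(-)^{\omega S^1}$ is ``tautologically'' compatible with tensoring over $\Perf(S^1)$ is not obvious: renormalized invariants are defined as $\Ind$ applied to a \emph{limit} (the totalization computing $\cat{C}_0^{\Perf(S^1)}$), and $\Ind$ does not commute with limits in general, so the projection-formula identity $\cat{C}^{\omega S^1} \simeq \cat{C}_0 \otimes_{\Perf(S^1)} \Ind(\Perf(S^1)^{\Perf(S^1)})$ requires justification. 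For the $\QCoh$-side you likewise assert that the projection formula ``holds after base change along $k[u]\to k$ since only the formal neighborhood of $u=0$ participates,'' but this is precisely the content of the proposition rather than an input to its proof. Your final paragraph acknowledges this circularity without breaking it. In short, the sketch isolates the correct universal computation but does not supply the mechanism that transports it to general $\cat{C}$; that mechanism is where the actual work lies.
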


\medskip

\subsubsection{Koszul duality and graded lifts}\label{graded lifts} As we have seen in Definition \ref{defn tate}, the Tate construction gives rise to a 2-periodic category, while the graded Tate construction provides a lift to a non-periodic category.  These phenomena are typical to Koszul duality \cite{BGS, BY} and we now give formalizations of these notions.

\begin{defn}\label{koszul duality def}
Let $\cat{C}$ be a $k$-linear category; a \emph{graded lift} of $\cat{C}$ is a $\QCoh(B\bG_m)$-module category $\cat{C}_{gr}$ and an equivalence $\cat{C} \simeq \cat{C}_{gr} \otimes_{\QCoh(B\bG_m)} \cat{Vect}_k$.  The category $\QCoh(B\bG_m)$ of $\bZ$-graded vector spaces has an automorphism by \emph{Tate shearing}:
$$M^\shear := \bigoplus_{n \in \bZ} M_n[-2n], \;\;\;\;\;\;\;\;\;\; M^{\unshear} := \bigoplus_{n \in \bZ} M_n[2n].$$
For any $\QCoh(B\bG_m)$-module category $\cat{C}$ we denote by $\cat{C}^\shear$ (resp. $\cat{C}^\unshear$) the $\QCoh(B\bG_m)$-module category where $\QCoh(B\bG_m)$ now acts through the Tate shearing (resp. unshearing).  For any $\QCoh(\bG_m)$-module category $\cat{C}$, we denote by $\cat{C}^\shear$ (resp. $\cat{C}^\unshear$) the category obtained by equivariantizing, shearing (resp. unshearing), and de-equivariantizing.  Finally, we say two categories are \emph{Koszul equivalent} 
$$\cat{C} \kos \cat{D}$$
if they admit graded lifts $\cat{C}_{gr}, \cat{D}_{gr}$ and an equivalence of categories $\cat{C}_{gr} \simeq \cat{D}^{\shear}_{gr}$.
\end{defn}

It is sometimes convenient to work with the a coarser notion of Koszul equivalence induced by the refined version above by passing to 2-periodic categories.
\begin{defn}\label{2per}
Let $\cat{C}$ be a $k$-linear category; the \emph{2-periodicization} of $\cat{C}$ is the 2-periodic category
$$\cat{C}^{per} := \cat{C} \otimes_{\Mod(k)} \Mod(k[u,u^{-1}])$$
where $|u| = 2$.  There is a canonical functor $(-)^{per}: \cat{C} \rightarrow \cat{C}^{per}$ such that $\Hom_{\cat{C}^{per}}(X^{per}, Y^{per})$ is the 2-periodicization of the chain complex $\Hom_{\cat{C}}(X, Y)$.
\end{defn}
Since the Tate shearing is the identity in 2-periodic categories, it becomes clear that a Koszul equivalence induces an equivalence on 2-periodic categories
$$\cat{C}^{per} \simeq \cat{D}^{per}.$$

\begin{exmp}
The 2-periodicization operation can introduce many new objects into a category.  For example, take the category $\rnD(BT)$ of ind-coherent $\cD$-modules on a torus, which is equivalent to the category $\Mod(H^\bullet(BT; k))$ of modules for the cohomology of $BT$, in turn equivalent to $\Mod(\Sym \mf{t}^*[-2])$.  Roughly speaking, this category only has a skyscraper object at 0.  However, the 2-periodicization is equivalent to the category $\Mod((\Sym \mf{t}^*[-2])[u,u^{-1}])$, which has skyscraper objects for every point $t \in \mf{t}$ by the evaluation maps $u^{-1}\mathrm{ev}_t: \mf{t}^*[-2]u^{-1} \simeq \mf{t}^* \rightarrow k$.  

We also observe that the standard graded lift of this category gives $\mf{t}^*$ weight $-1$, which the Tate shearing moves from degree 2 to degree 0; by contrast, the Rees parameter $t$ has weight 1, which the Tate shearing turns into the degree 2 $S^1$-deformation parameter $u$.

\end{exmp}

\medskip

\subsubsection{Examples} We present the following very computable examples of $S^1$-equivariant categories.  Namely, let $G$ be an abelian affine algebraic (possibly formal) group with Cartier dual $\wh{G}$.  Then, passing through Cartier duality, we have identifications
$$\Coh(\cL(BG)) = \Coh(G \times BG) \simeq \Coh(G \times \wh{G}).$$
Furthermore, Cartier duality provides a monoidal equivalence
$$(\QCoh(B\bZ), \circ) \simeq (\QCoh(\bG_m), \otimes)$$
i.e. an $S^1$-action on a category may be described as a $\QCoh(\bG_m)$-linear structure.  In this setting, both arise geometrically, i.e. come from pullback along the canonical pairing
$$G \times \wh{G} \rightarrow \bG_m, \;\;\;\;\;\;\;\;\;\; (g, \chi) \mapsto \chi(g)$$
Applying the discusison of \cite[\textsection 3]{toly}, we have
$$\Coh(\cL(BG))^{S^1} \simeq \Coh((G \times \wh{G}) \times_{\bG_m} \{1\}), \;\;\;\;\;\;\;\;\;\; \Coh(\cL(BG))^{\Tate} \simeq \mathrm{MF}(G \times \wh{G}, f)$$
where $\mathrm{MF}$ denotes the 2-periodic category of matrix factorizations.  We work out the above in two specific examples.

\begin{exmp}\label{exmp BT}
Let us take $X = BT$ where $T$ is an algebraic torus; then $\cL(BT) \simeq T \times BT$.  We have an identification of the category
$$\Coh(\cL(BT)) \simeq \Coh(T \times BT) \simeq \Coh(T \times X^\bullet(T)) \simeq  \bigoplus_{\lambda \in X^\bullet(T)} \Coh(T),$$
i.e. a decomposition into isotypic components for $\Rep(T)$.  
The $S^1$-action is given (see Remark \ref{categorical S1 def}) by multiplication by $t^\lambda$ at $(t, \lambda) \in T \times X^\bullet(T)$, i.e. multiplication by the monomial $t^\lambda \in \cO(T)$ on $T \times \{\lambda\}$.
We now compute the $S^1$-equivariant, Tate localization, and $S^1$-invariant objects (see Proposition \ref{equivariant ff}).  Let $T_\lambda$ denote the derived zero locus of the polynomial $1 - t^\lambda$.  Explicitly,
$$T_\lambda = (\cO(T)[\epsilon], d(\epsilon) = 1 - t^\lambda) \simeq \begin{cases} \cO(T)[\epsilon] & \lambda = 1, \\ \cO(T)/t^\lambda - 1 & \lambda \ne 1. \end{cases}$$
By Remark \ref{categorical S1 def} the $S^1$-invariants for an $S^1$-action given by automorphism $\alpha$ may be described as imposing the equation $1 - \alpha$ in a derived way.  Thus, we have
$$\Coh(\cL(BT))^{S^1} \simeq \bigoplus_{\lambda \in X^\bullet(T)} \Coh(T_\lambda).$$
Koszul dually \cite[Con. 3.1.5]{toly}, we may view $\Coh(\cL(BT))^{S^1}$ as a $k[u]$-module category where $|u| = 2$ acts by cohomological operators.  The derived zero locus $T_\lambda$ is smooth when $\lambda \ne 1$, thus any coherent sheaf has a finite resolution, thus $u$ acts by torsion on $\Coh(T_\lambda)$, i.e. $\Coh(T_\lambda) \otimes_{k[u]} k(u) \simeq 0$.  When $\lambda = 1$ then $u$ acts freely, and we have (see also \cite[Prop. 3.4.1]{toly}):
$$\Coh(\cL(BT))^{\Tate} \simeq \Coh(T) \otimes \Mod(k[u, u^{-1}]).$$
In particular, we have
$$\Coh(\wh{\cL}(BT))^{\Tate} = \Coh(\cL^u(BT))^{\Tate} \subsetneq \Coh(\cL(BT))^{\Tate}$$
where the first equality arises since $T$ has no unipotent elements, i.e. $\cL^u(BT) = \wh{\cL}(BT)$.  Specializing at $u=0$ we have instead \cite[Cor. 3.2.4]{toly}
$$\Coh(\cL(BT))^{S^1} \otimes_{k[u]} k \simeq \bigoplus_{\lambda \in X^\bullet(T)} \Coh_{T_\lambda}(T).$$
In particular, not every object of $\Coh(\cL(BT))$ is $S^1$-equivariantiazble.
\end{exmp}

\begin{exmp}\label{exmp BGa}
We consider a somewhat orthogonal example.  Consider $X = B\bG_a$.  Then, we have via Cartier duality
$$\Coh(\cL(B\bG_a)) \simeq \Coh(\bG_a \times B\bG_a) \simeq \Coh(\bG_a \times \wh{\bG}_a).$$
Fix coordinates $x, y$ on the two $\bG_a$ respectively; we view $x$ as coming from the action of $\cO(\bG_a)$, and $y$ as the action of $1 \in \mathrm{Lie}(\bG_a)$, i.e. the action map on  $V \in \Coh(B\bG_a)$ is $t \mapsto \exp(ty)$.  Then, the $S^1$-action is given by the automorphism of the identity $\exp(xy)$.  Taking $S^1$-invariants imposes the equation $\exp(xy) = 1$, i.e. $xy = 0$ (since $y$ is in an infinitesimal neighborhood of 0, we can ignore other logarithms of 1), in a derived way.  We have
$$\Coh(\cL(B\bG_a))^{S^1} \simeq \Mod_{\mathrm{f.g.}, y\mh\mathrm{nil}}(k[x, y]/xy).$$
The Tate localization is the category of matrix factorizations on $k[x, y]$ for the equation $xy = 0$ (where $y$ acts nilpotently).  We have an equivalences
$$\Coh(\wh{\cL}(B\bG_a))^{\Tate} \simeq \Coh(\cL^u(B\bG_a))^{\Tate} = \Coh(\cL(B\bG_a))^{\Tate} \simeq \Mod(k(u))$$
where the first isomorphism arises since the equation $xy = 0$ has singular locus at the origin (thus completing at $x=0$ doesn't change the category of matrix factorizations), and the second arises geometrically (i.e. $\cL^u(B\bG_a) = \cL(B\bG_a)$ since $\bG_a$ consists only of unipotent elements).  The rightmost identification of the categories is a calculation, see e.g. \cite[Sec. 5.3]{toby}.  Alternatively, one can pass through Koszul duality (Theorem \ref{kd stacks thm}), where $\Coh(\wh{\cL}(B\bG_a))^{\Tate} \simeq \cD(B\bG_a) \otimes \Mod(k(u)) \simeq \Mod(k(u))$.  All objects of $\Coh(\cL(B\bG_a))$ are $S^1$-invariant, i.e.
$$\Coh(\cL(B\bG_a))^{S^1} \otimes_{k[u]} k = \Coh(\cL(B\bG_a)).$$
Since all loops are unipotent, the $S^1$-action factors through a $B\bG_a$-action, which is 1-affine.  This example is easily generalized to the case where $X = BV$, where $V$ is a commutative additive group.
\end{exmp}

\subsection{Koszul duality and equivariant localization}

We now turn our attention to various technical details that arise in the discussion of Koszul duality and equivariant localization.

\medskip

\subsubsection{Renormalized categories}\label{sec ren}

The category $\cD(X)$ of $\D$-modules on a smooth stack $X$ is defined via descent. Let $F\cD(X)$ denote the category of filtered $\cD$-modules on a stack $X$, also defined via descent.  We let $F\cD_c(X)$ denote the full subcategory of coherent $\cD$-modules on $X$ (with good filtrations).  Note that these are not the compact objects in $F\cD(X)$ in general; we let $F\breve{\cD}(X) = \Ind(F\cD_c(X))$ denote the category of \emph{ind-coherent filtered $D$-modules}, i.e. we renormalize the category with respect to coherent $D$-modules.

\medskip

We require a similar renormalization for the odd tangent bundle; see Section 2 of \cite{CD} for a discussion.  For any closed substack $Z \subset X$ we define the category $\hCoh(\wh{X}_Z)$ (sometimes denoted $\hCoh_Z(X)$) of \emph{ind-continuous coherent sheaves} to be the full subcategory consisting of objects $\cF \in \IndCoh(\wh{X}_Z) \simeq \IndCoh_Z(X)$ such that $\cF$ is $t$-bounded and almost $!$-perfect, i.e. such that for any closed substack $i: Z' \hookrightarrow X$ set-theoretically supported on $Z \subset X$, the $!$-restriction $i^! \cF$ has coherent cohomology.  This category has the following properties (see \cite{koszul}).
\begin{enumerate}[leftmargin=5ex]
\item It contains the usual category of coherent sheaves supported along $Z \subset X$, i.e. $\Coh_Z(X) \subset \wh{\Coh}_Z(X)$.  In particular, we have $\Coh(\wh{\cL} X) = \Coh_X(\cL X) \subset \hCoh(\wh{\cL} X).$
\item Letting $\wh{i}: \wh{X}_Z \hookrightarrow X$ be the inclusion of the formal completion, the functor $\wh{i}^!: \IndCoh(X) \rightarrow \IndCoh(\wh{X}_Z)$ takes $\Coh(X)$ to $\wh{\Coh}(\wh{X}_Z)$.  In particular, taking $\wh{z}: \wh{\cL} X \rightarrow \cL X$ to be the inclusion of formal loops, the functor $\wh{z}^!: \IndCoh(\cL X) \rightarrow \IndCoh(\wh{\cL} X)$ restricts to a functor $\Coh(\cL X) \rightarrow \hCoh(\wh{\cL} X)$.
\item If $Z \subset X$ is defined by a nilpotent ideal, then $\Coh(\wh{X}_Z) = \wh{\Coh}(\wh{X}_Z) = \Coh(X)$.  In particular, if $X$ is a scheme, then $\Coh(\wh{\cL} X) = \hCoh(\wh{\cL} X)  = \Coh(\cL X)$.
\item If $Z \subset X$ is a derived local complete intersection, then $\cF \in \wh{\Coh}(\wh{X}_Z)$ if and only if its $!$-restriction to $Z$ is in $\Coh(Z)$.  In particular, if $p: U \rightarrow X$ is a smooth atlas, we have that $\cF \in \hCoh(\wh{\cL} X)$ if and only if $\wh{\cL}p^! \cF \in \Coh(\cL U)$.
\end{enumerate}
We denote its ind-completion by $\wh{\IndCoh}(\wh{X}_Z) := \Ind(\wh{\Coh}(\wh{X}_Z))$.    This renormalization is necessary to have a good restriction functor to completions at parameters.  In a general setting, if $F: \cat{C} \rightarrow \cat{D}$ is a colimit-preserving functor which preserves compact objects (i.e. a left adjoint with a colimit-preserving right adjoint) between compactly generated categories, then for $X \in \cat{C}$ compact we have a commuting diagram:
$$\begin{tikzcd}
\End(X)^{op}\dmod \arrow[r, hook] \arrow[d, "{- \otimes_{\End(X)} \End(F(X))}"'] & \cat{C} \arrow[d, "F"] \\
\End(F(X))^{op}\dmod \arrow[r, hook] & \cat{D}.
\end{tikzcd}$$
Commutativity follows by checking the tautological commutativity of right adjoints, while compactness of $X$ guarantees that the left adjoint to $\Hom(X, -)$ is fully faithful\footnote{I.e. the unit of the adjunction $M \rightarrow \Hom(X, X \otimes_{\End(X)} M)$ is an equivalence when $\Hom(X, -)$ commutes with colimits.} (and similarly for $F(X)$).  Unfortunately, the $!$-restriction on ind-coherent sheaves is a right adjoint and does not preserve compact objects.  On the other hand, under renormalization $\widehat{\ell}_\alpha^!: \IndCoh(\cL(X/G)) \rightarrow \wh{\IndCoh}(\LLf_\alpha(X(\alpha)/G(\alpha)))$ becomes a \emph{left adjoint}, and compact objects by construction.  In particular, we have a commuting square:
$$\begin{tikzcd}
\Mod(\cH) \simeq \langle \cS \rangle \arrow[d, "{- \otimes_{\cH} \cH(\alpha)}"'] \arrow[r, hook] & \IndCoh(\cL(X/G)) \arrow[d, "{\wh{\ell}_\alpha^!}"] \\
\Mod(\cH(\alpha)) \simeq \langle \cS(\alpha) \rangle \arrow[r, hook] & \wh{\IndCoh}(\LLf_\alpha(X(\alpha)/G(\alpha))).
\end{tikzcd}$$

\medskip




\subsubsection{Homotopy fixed points}\label{sec homotopy fixed}

Our equivariant localization statement in Theorem \ref{LocThm} uses a notion of homotopy fixed points; we give a brief overview of this notion and refer the reader to the appendix of \cite{AKLPR} for details.
\begin{defn}
Let $G$ be a group prestack acting on a prestack $X$.  We define the \emph{homotopy fixed points} $X^{G}$ by the (derived) fiber product
$$\begin{tikzcd}
X^G \arrow[r] \arrow[d] & \Map(BG, X/G) \arrow[d]  \\
\{\mathrm{id}_{BG}\} \arrow[r] & \Map(BG, BG).
\end{tikzcd}$$
In other words, $X^G$ is the prestack of sections of the map $X/G \rightarrow BG$.
\end{defn}

We discuss examples of this construction, since it has a different flavor for different inputs $G$.
\begin{enumerate}[leftmargin=5ex]
\item When $G$ is linearly reductive and $X$ a locally Noetherian Artin stack, then the cotangent complex of $X^G$ has the same lower bound on Tor amplitude (i.e. ``level of singularity'') as the cotangent complex of $X$ by Corollary A.35 of \cite{AKLPR}. In particular, if $X$ is a smooth scheme, then $X^G \subset X$ is a smooth closed subscheme by Proposition A.23 of \emph{op. cit.} and thus $X^G$ is the classical $G$-fixed points of $X$.  This observation, combined with the observation that $(-)^{hG}$ commutes with Zariski localization and fiber products, gives a method for computing homotopy fixed points for derived schemes with given presentations.
\item If $G$ is not reductive, $X^G$ may have derived structure.  For example, taking $G = \bG_a$ acting trivially on $X$, we have $X^G = \cL^u X$ is the unipotent loop space, which is just the usual loop space $\cL X$ when $X$ is a scheme.
\item Assuming that $G$ is reductive, but without the assumption that $X$ is smooth, it is possible for $X^G$ to have nontrivial derived structure.  For example, the nilpotent cone $\cN = \mf{g} \times_{\mf{g}//G} \{0\}$ is a non-derived complete intersection.  However, taking $G$-invariants we have $\cN^G = \mf{z}(\mf{g}) \times_{\mf{g}//G} \{0\}$, which has derived structure unless $\mf{g}$ is commutative, i.e. $G$ is a torus.  When $G$ is a torus, one can argue that $X^G$ is classical if $X$ is quasi-smooth and classical.\footnote{One may adapt the argument in Theorem \ref{LocThm} to reduce to the case where $X$ is a fiber product of smooth $G$-schemes, which one may then reduce to the case of a derived intersection of smooth $G$-schemes, and then argue via tangent spaces.}
\item When $G$ is a topological group acting trivially on $X$, the homotopy fixed points $X^G = \Map(BG, X) = \Loc_G(X)$ is the derived moduli stack of $G$-local systems on $X$.  This stack often has nontrivial derived structure, even when $X$ does not.
\item When $G = \bZ$ acts on $X$ via an automorphism $\phi: X \rightarrow X$, then we obtain the derived fixed points $\cL_\phi X$ of $\phi$ on $X$.  In particular, even when $X$ is a smooth scheme this will be derived unless the fixed points have dimension 0.
\item More generally, if $G$ is a discrete cyclic (thus commutative) group with generator $\eta \in G$, then there is a canonical equivalence $X^G \simeq \cL_\eta(X/G)$, e.g. by observing that the outer and left squares are Cartesian in the diagram:
$$\begin{tikzcd}
X^G \arrow[r] \arrow[d] & \Map(BG, X/G) \arrow[r] \arrow[d] & \cL(X/G) \arrow[d] \\
\{e\} \arrow[r] & \Hom(BG, BG) \arrow[r] & \cL(BG)
\end{tikzcd}$$
and noting that the bottom composition is exactly the map $\{\eta\} \rightarrow G/G$.
\item Suppose $G = \bZ/n\bZ$ acts on a scheme $X$ trivially.  Then, $X^G$ is the relative kernel (over $X$) of the map $\bT_X[-1] \rightarrow \bT_X[-1]$ induced by the degree $n$ map on $S^1$, i.e. multiplication by $n$, which is an isomorphism if $n$ is invertible in $k$.  Note that $G = \bZ/n\bZ$ is linearly reductive over $k$ if and only if $n$ is invertible in $k$.
\end{enumerate}

\medskip

\subsubsection{Central shifting and twisting}\label{sec central twist}

The localization maps in Theorem \ref{LocThm} reduce the study of the loop space $\cL(X/G)$ over $[z] \in G//G$ to the study of $\cL(X(\alpha)/G(\alpha))$ over $[\alpha] \in G(\alpha)//G(\alpha)$.  The advantage of the latter is that $\alpha \in G(\alpha)$ is central and acts on $X(\alpha)$ trivially; thus we can perform a central shifting that translates $\alpha$ to the identity.  This shifting is \emph{not} $S^1$-equivariant for the loop rotations.\footnote{For example, the loop rotation on $\cL(BG) = G/G$ acts over $g \in G/G$ via the automorphism which conjuates by $g$, and this is not preserved by central shifting.}  Our goal in this section is to describe this shifting and a twisted $S^1$-action for which it is equivariant.

\medskip

We begin by describing the shifting.  The localization maps constructed in Theorem \ref{LocThm} factor into two steps:
$$\cL(X(\alpha)/G(\alpha)) \longrightarrow \cL(X/G(\alpha)) \longrightarrow \cL(X/G)$$
with the first map exhibiting the localization, and the second map a simple \'{e}tale base change in a neighborhood of $\alpha \in G//G$.  We will assume we have already performed this base change, focusing exclusively on the first map, and so may assume that $\alpha \in Z(G)$ is a central element.  We need to introduce the notion of $\alpha$-trivializations.
\begin{defn}
Let $G$ be a linear algebraic group with reductive neutral component acting on a prestack $X$, and $H \subset G$ be a normal subgroup, and denote $G' = G/H$.  An \emph{$H$-trivialization} of the $G$ action on $X$ is a $G'$-action on $X$ and an equivalence
$$X/G \simeq X/G' \times_{BG'} BG.$$
When $H = \langle \alpha \rangle$ for $\alpha \in Z(G)$ central, we call an $H$-trivialization an \emph{$\alpha$-trivialization}.
\end{defn}

\begin{prop}
Let $G$ act on a derived scheme $X$, and let $A \subset G$ be a central subgroup.  The homotopy fixed points $X^A$ comes equipped with a canonical $A$-trivialization.
\end{prop}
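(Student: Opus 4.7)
The plan is to extract the trivialization from the defining universal property of the homotopy fixed points $X^A$: it provides a tautological trivialization of the restricted $A$-action, which centrality promotes to an $A$-trivialization of the ambient $G$-action.

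First I would unpack the setup. By definition $X^A$ is the prestack of sections of $X/A \to BA$; equivalently, a point of $X^A$ is an $A$-equivariant map $\mathrm{pt} \to X$ (where $A$ acts trivially on $\mathrm{pt}$). Since $A \subset G$ is central, the $G$-action on $X$ commutes with the $A$-action, so the full $G$-action restricts to an action on $X^A$. The key observation is that for any prestack $Z$ with an $A$-action, the $A$-action on $Z^A$ obtained by restricting the ambient $A$-action on $Z$ is canonically trivial: a point of $Z^A$ is by definition an $A$-equivariant map $\mathrm{pt} \to Z$, which identifies the source and target $A$-actions, and the source action is tautologically trivial. Applied to $X$ viewed as an $A$-space via $A \subset G$, this gives a canonical trivialization of the restricted $A$-action on $X^A$, coherent as a datum of $\infty$-prestacks by functoriality of mapping spaces.

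Second I would translate this trivialization into the required equivalence of quotient stacks. With the canonical trivialization in hand, $X^A/A \simeq X^A \times BA$. Modding out further by $G' := G/A$ yields
$$X^A/G \simeq (X^A/A)/G' \simeq (X^A \times BA)/G'.$$
Centrality of $A$ in $G$ means that the conjugation action of $G$ on $A$, and hence the induced action of $G'$ on $BA$, is trivial. Therefore
$$(X^A \times BA)/G' \simeq (X^A/G') \times BA \simeq (X^A/G') \times_{BG'} BG,$$
the last equivalence using that $BA \simeq BG \times_{BG'} \mathrm{pt}$ from the fiber sequence $BA \to BG \to BG'$. The resulting equivalence $X^A/G \simeq X^A/G' \times_{BG'} BG$ is exactly the data of an $A$-trivialization of the $G$-action on $X^A$, and it is canonical since each step above was.

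The main obstacle is coherence in the derived $\infty$-categorical setting: classically each equivalence above is elementary, but one must present them as a coherent chain of equivalences of prestacks rather than at the level of objects. This is handled by working throughout with the mapping-prestack description
$$X^A = \mathrm{Map}(BA, X/A) \times_{\mathrm{Map}(BA, BA)} \{\mathrm{id}_{BA}\},$$
so that both the trivial and the ambient $A$-actions on $X^A$ arise from functorial constructions and the identification between them is the universal one. All higher coherences then follow formally, with no further input, and the construction is manifestly natural in $X$ with respect to $G$-equivariant maps.
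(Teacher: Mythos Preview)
Your overall strategy---establish that the $G$-action on $X^A$ factors through $G' = G/A$, then deduce the $A$-trivialization---is sound, but the chain of equivalences you write down breaks in the middle. The step
\[
(X^A \times BA)/G' \;\simeq\; (X^A/G') \times BA
\]
is false in general. The $G'$-action on $X^A/A$ that you are quotienting by is the one induced from the $G$-action on $X^A$ via $A \trianglelefteq G$; under the identification $X^A/A \simeq X^A \times BA$ this is \emph{not} the product of the $G'$-action on $X^A$ with the trivial action on $BA$ unless the central extension $1 \to A \to G \to G' \to 1$ splits. Triviality of the conjugation action of $G$ on $A$ only kills the $\pi_0(\Aut(BA)) = \mathrm{Out}(A)$ part of the action; the extension class in $H^2(G';A)$ survives as a nontrivial action of $G'$ on $BA$ in the stacky sense. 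Concretely, take $G = \bZ/4$, $A = 2\bZ/4$, $X = \pt$: then $X^A/G = B(\bZ/4)$, while $(X^A/G') \times BA = B(\bZ/2) \times B(\bZ/2)$. Your subsequent step $(X^A/G') \times BA \simeq (X^A/G') \times_{BG'} BG$ is equally false in this example; the two errors happen to cancel, but the intermediate object is wrong.

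The fix is to bypass the intermediate product entirely. Once you know the $G$-action on $X^A$ factors through $G'$, the equivalence $X^A/G \simeq (X^A/G') \times_{BG'} BG$ is immediate: a point of the left side is a $G$-torsor $Q$ with a $G$-map $Q \to X^A$, and since $A$ acts trivially this map factors through the $G'$-torsor $Q/A$, which is exactly the data of a point of the right side. The paper's proof takes a somewhat different route, presenting $X^A$ as the fiber of $\Map(BA, X/G) \to \Map(BA, BG)$ over the inclusion $i:BA \to BG$ and then lifting $\{i\} \hookrightarrow \Map(BA, BG)$ to a map out of $\{i\}/G'$; your corrected approach is arguably more direct.
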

\begin{proof}
First, note that letting $i: BA \rightarrow BG$ denote the map defined by the inclusion of $A$ into $G$, we have a canonical equivalence
$$X^A \simeq \{i\} \times_{\Map(BA, BG)} \Map(BA, X/G).$$
On the right, since $A$ is central and both are reductive, the inclusion $\{i\} \hookrightarrow \Map(BA, BG) \simeq \Hom_{grp}(A, G)/G$ lifts to a map $\{i\}/G \hookrightarrow \Map(BG, BG)$.  Since $A$ acts trivially on $\Map(BA, BG)$, this map also descends to a map $\{i\}/G' \rightarrow \Hom_{grp}(A, G)/G$.  This data furnishes $X^A$ with the desired structure.
\end{proof}

\medskip

Again letting $\alpha \in G$ be central, and taking $A = \langle \alpha \rangle$, suppose that $X$ is a derived scheme equipped with an $A$-trivialization.  We now define a \emph{shift by $\alpha$} map on $\cL(X/G)$
$$sh_\alpha: \cL(X/G) \rightarrow \cL(X/G)$$
compatible with the multiplication by $\alpha$ map $\mu_\alpha: \cL(BG) = G/G \rightarrow \cL(BG) = G/G$ as follows.  
The canonical $\alpha$-trivialization $X^\alpha/G \simeq X^\alpha/G' \times_{BG'} BG$ induces a canonical equivalence $\cL(X^\alpha/G) \simeq \cL(X^\alpha/G') \times_{G'/G'} G/G$, and the automorphism is given on the right-hand side by the multiplication by $z$ map on $G/G$ and the identity elsewhere.  In the setting of Theorem~\ref{LocThm}, we have a composition which we call the (unipotent) \emph{$\alpha$-shifted localization map}
$$
s\ell_\alpha^u: \begin{tikzcd}[column sep=large]
\LL^u(X(\alpha)/G(\alpha)) \arrow[r, "{sh_\alpha}", "\simeq"'] &  \LL^u_\alpha(X(\alpha)/G(\alpha)) \arrow[r, "{\ell_\alpha^u}", "\simeq"'] &  \LL_\alpha^u(X/G).
\end{tikzcd}$$

\medskip

The shifting map on the left is \emph{not} $S^1$-equivariant for the loop rotation; we introduce the twisting of the $S^1$ action now.  For $\alpha \in Z(G)$, the multplication by $\alpha$ defines a map of groups $\bZ \times G \rightarrow G$, which gives rise to an action map $S^1 \times BG \rightarrow BG$.  If $G$ acts on an $\alpha$-trivialized scheme $X$, then this $S^1$-action gives rise to an $S^1$-action on $X/G \simeq X/G' \times_{BG'} BG$, where we take $G' = G/\langle \alpha \rangle$ and the trivial $S^1$-action on $X/G'$ and $BG'$.  We call the resulting $S^1$-action the \emph{$\alpha$-twisting $S^1$-action} $\sigma_\alpha$ on $X/G$.  Since the $S^1$-actions $\cL(\sigma_\alpha)$ and the loop rotation $\rho$ commute, we define the \emph{$\alpha$-shifted loop rotation}, denoted $\rho(\alpha)$ or $S^1(\alpha)$, to be the diagonal to the $S^1 \times S^1$-action $\rho \times \cL(\sigma_\alpha)$.  The shifted localization map is equivariant with respect to this twisting.
\begin{corollary}
The shifted localization map defines an equivalence  $$\begin{tikzcd}s\ell^u_\alpha : \LL^u(X(\alpha)/G(\alpha)) \arrow[r, "\simeq"]& \LL_\alpha^u(X/G)\end{tikzcd}$$ which is $S^1$-equivariant with respect to $\rho(\alpha)$ on the source and $\rho$ on the target, and likewise for the shifts of the completed and specialized localization maps $s\ell^\wedge_\alpha$ and $s\ell^\prime_\alpha$.
\end{corollary}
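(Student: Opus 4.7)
The plan is to decompose the shifted localization map into its two constituent pieces and verify each is an equivalence with the appropriate equivariance. Concretely, $s\ell^u_\alpha = \ell^u_\alpha \circ sh_\alpha$, and I will treat the two factors in turn.

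First I would establish that $sh_\alpha : \LL^u(X(\alpha)/G(\alpha)) \to \LL^u_\alpha(X(\alpha)/G(\alpha))$ is an equivalence intertwining the two $S^1$-actions. Since we have already arranged $\alpha \in Z(G)$ to be central and $X(\alpha)$ carries a canonical $\alpha$-trivialization, the stack $X(\alpha)/G(\alpha)$ splits as $X(\alpha)/G(\alpha)' \times_{BG(\alpha)'} BG(\alpha)$ where $G(\alpha)' = G(\alpha)/\langle \alpha \rangle$. Applying $\cL^u(-)$ commutes with this fiber product, so the formal/unipotent loop space splits analogously, and under this splitting $sh_\alpha$ is the identity on the $X(\alpha)/G(\alpha)'$ factor and the multiplication-by-$\alpha$ automorphism of $\cL^u(BG(\alpha))$ on the second factor. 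Both are equivalences, hence so is $sh_\alpha$. For equivariance, note that by the definition of $\rho(\alpha)$ as the diagonal of $\rho$ and $\cL(\sigma_\alpha)$, and the fact that $\cL(\sigma_\alpha)$ acts on $X/G$ through the $\alpha$-twist via the $BG$ factor, the map $sh_\alpha$ tautologically exchanges $\rho(\alpha)$ on the source with $\rho$ on the target: the extra $\cL(\sigma_\alpha)$-factor in $\rho(\alpha)$ is precisely absorbed by the shift automorphism in the $\cL^u(BG(\alpha))$ coordinate.

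Next I would invoke Theorem \ref{LocThm} to conclude that $\ell^u_\alpha : \LL^u_\alpha(X(\alpha)/G(\alpha)) \to \LL^u_\alpha(X/G)$ is an $S^1$-equivariant equivalence for the loop rotation $\rho$ on both sides. Composing with the first step yields the desired equivalence and equivariance for $s\ell^u_\alpha$. For the formal and specialized variants $s\ell^\wedge_\alpha$ and $s\ell^\prime_\alpha$, the argument is identical: the shift is defined by exactly the same splitting and acts coordinate-by-coordinate, so the relevant completion/fiber along $\bO/G$ (resp.\ $\{\alpha\}/G(\alpha)$) is preserved, and the localization statements for $\hat{\ell}_\alpha$, $\ell'_\alpha$ in Theorem \ref{LocThm} provide the other factor.

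The substantive step, and the one that requires care, is the bookkeeping for the equivariance at the level of the shift: one must verify that under the canonical identification $\cL(X(\alpha)/G(\alpha)) \simeq \cL(X(\alpha)/G(\alpha)') \times_{G(\alpha)'/G(\alpha)'} G(\alpha)/G(\alpha)$ coming from the $\alpha$-trivialization, the shift automorphism really is the identity on the first factor and multiplication by $\alpha$ on the second, and moreover that the two $S^1$-actions $\rho$ and $\cL(\sigma_\alpha)$ on $\cL(BG(\alpha))$ commute (so that their diagonal is well-defined as an $S^1$-action rather than merely an $S^1 \times S^1$-action). The commutativity follows because $\sigma_\alpha$ acts by central multiplication, and the ``shift swaps $\rho$ and $\rho \cdot \cL(\sigma_\alpha)$'' statement is exactly the content of the definition of $\rho(\alpha)$ as the diagonal. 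No further input is needed beyond the material already developed in Sections \ref{sec homotopy fixed} and \ref{sec central twist}, so the corollary is essentially a direct assembly of Theorem \ref{LocThm} with the shifting construction.
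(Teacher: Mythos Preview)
Your proposal is correct and matches the paper's treatment: the paper states this corollary without proof, leaving it as an immediate consequence of Theorem~\ref{LocThm} together with the definition of the shift $sh_\alpha$ and the twisted rotation $\rho(\alpha)$ just introduced. Your unpacking of the composition $s\ell^u_\alpha = \ell^u_\alpha \circ sh_\alpha$, with the equivariance bookkeeping showing that $sh_\alpha$ exchanges $\rho(\alpha)$ for $\rho$ and then Theorem~\ref{LocThm} supplying the $\rho$-equivariant equivalence, is exactly the intended argument.
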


\medskip

\subsubsection{Trivial blocks for twisted actions}\label{sec triv block}

In order to apply the Koszul duality discussed in Section \ref{equiv sheaves}, which involves $S^1$-equivariance for the untwisted action, we are interested in identifying a subcategory of sheaves on derived loop spaces over semisimple parameter $\alpha$ on which the $\alpha$-twisting is trivial.  This is useful since the $\rho$ circle action on unipotent loop spaces factors through an action of $B\G_a$, but the twisted $\rho(\alpha)$ action does not (since it has nontrivial semisimple part).  This problem is an obstacle to applying Koszul duality to obtain an identification of $\Coh(\LLf_\alpha(X/G))^{S^1}$ with some kind of category of $\D$-modules as is done in Section \ref{shvs via eq loc}.

\medskip

To define this subcategory, we give a categorical interpretation of the geometric $\alpha$-twisting $S^1$-action $\sigma(\alpha)$ discussed above.
\begin{defn}\label{z trivial}
Let $G$ be an affine algebraic group, $A \subset Z(G)$ a central subgroup with $G'=G/A$, and $\cat{C}$ a $\QCoh(BG)$-module category.  An \emph{$A$-trivialization} of $\cat{C}$ consists of the data of a $\QCoh(BG')$-module category $\cat{C}'$ along with an identification $\cat{C} \simeq \cat{C}' \otimes_{\QCoh(BG')} \QCoh(BG)$.  We define the subcategory of \emph{$A$-trivial objects} $\cat{C}_A \subset \cat{C}$ to be the essential image of the natural functor $\cat{C}' \rightarrow \cat{C}$.  When $A = \langle \alpha \rangle$ for $\alpha \in Z(G)$, we write $\cat{C}_\alpha$.
\end{defn}

\medskip

Furthermore if $G$ is reductive, then the $\alpha$-trivial objects form a summand (not just a subcategory) of $\cat{C}$.  Since $G$ is reductive, the center is semisimple and letting $X^\bullet(A)$ denote the group of characters of $A$, we have a decomposition $\QCoh(BG) = \Rep(G) = \bigoplus_{\chi \in X^\bullet(A)} \Rep(G)_\chi$, with the trivial isotype corresponding to $\Rep(G')$.  Thus an $\alpha$-trivialization of $\cat{C}$ defines a direct sum decomposition 
$$\cat{C} \simeq \bigoplus_{\chi \in X^\bullet(A)} \cat{C}_\chi \simeq \bigoplus_{\chi \in X^\bullet(A)} \cat{C}' \otimes_{\Rep(G')} \Rep(G)_\chi.$$
We will consider the following $\alpha$-trivialized categories which arise in nature.
\begin{enumerate}
\item If $X/G$ is a global quotient stack, then an $\alpha$-trivialization of the stack gives rise to a $z$-trivialization of the category $\QCoh(X)$, $\IndCoh(X)$, et cetera.
\item If $\alpha \in G$ is central, then there is a canonical $\alpha$-trivialization of $\cL(BG) = G/G$.
\item Combining the two above, we have an $\alpha$-trivialization on $\cL(X/G)$.  
\end{enumerate}

\begin{exmp}
We return to Example \ref{exmp BT}.  Recall that $T_\lambda = \ker(\lambda)$ is the (derived) kernel of $\lambda \in X^\bullet(T)$ for $\alpha \in T$.  We have descriptions
$$\Coh({\cL}_\alpha(BT))^{S^1} = \bigoplus_{\lambda \in X^\bullet(T)} \Coh(T_\lambda),$$ 
$$\Coh({\cL}(BT))^{S^1(\alpha)} = \bigoplus_{\lambda \in X^\bullet(T)} \Coh(\alpha^{-1}T_{\lambda})$$
and the $\alpha$-trivial blocks correspond to those where $\alpha \in T_\lambda$.  In particular, applying the Tate construction, we have that the inclusion of the $\alpha$-trivial block induces an equivalence
$$\Coh({\cL}(BT))^{\Tate} \simeq \Coh({\cL}(BT))^{\Tate(\alpha)}$$
for all twists $\alpha$.
\end{exmp}

\subsubsection{Localization for non-reductive groups}\label{loc BB}

The equivariant localization in Theorem \ref{LocThm} can be extended to non-\hspace{0pt}reductive groups $K$ as follows: given a quotient stack $X/K$, we can consider instead the quotient stack $(X \times^K G)/G$, and do equivariant localization over $G//G$.  In general, the map $K//K \rightarrow G//G$ is neither injective nor surjective (e.g. when $K=U$ is a unipotent subgroup).

We pay special attention to the case where $K = B \subset G$ is a Borel subgroup of $G$.  In this case, we have the diagram
$$\begin{tikzcd}
\cL(X/B) = \cL((X \times^B G)/G) \arrow[r] & \cL(BB) = B/B \arrow[r]\arrow[d]  & \cL(BG) = G/G \arrow[d] \\
& B//B = H \arrow[r, "\nu"'] & G//G = H//W.
\end{tikzcd}$$
For given $\alpha \in H$ and $[\alpha] = \nu(\alpha) \in H//W$, we can have an $W_{G(\alpha)}$-equivariant identification of the $\alpha$-fixed points of $G/B$ with $\nu^{-1}([\alpha]) \times G(\alpha)/B(\alpha)$, where here $G(\alpha)$ is the centralizer of $\alpha$ and $B(\alpha) \subset G(\alpha)$ is the Borel subgroup.  Thus,
$$\cL_{[\alpha]}(BB) = \cL_{[\alpha]}(G \bs G/B) = \cL_{[\alpha]}(G(\alpha) \bs (G/B)^\alpha) = \coprod_{\nu^{-1}([\alpha])} \cL_{[\alpha]}(G(\alpha) \bs G(\alpha)/B(\alpha)).$$
We define $\cL_\alpha(BB)$ to be the connected component corresponding to $\alpha \in \nu^{-1}([\alpha])$, and then may define for $\alpha \in B//B = H$
$$\cL_\alpha(X/B) := \cL_{[\alpha]}((X \times^B G)/G) \times_{\cL_{[\alpha]}(BB)} \cL_\alpha(BB).$$
For details, see Example 1.0.6 of \cite{Ch}.

\bibliographystyle{amsplain}

\end{document}